\documentclass[14pt]{article}
\usepackage[utf8]{inputenc}
\usepackage{pdfpages}
\usepackage{mathpazo}
\usepackage{amssymb, amsthm,amsmath}
\usepackage{url}

\usepackage[toc,page]{appendix}

\usepackage[Conny]{fncychap}
\usepackage{graphicx}
\usepackage{adjustbox}
\usepackage{comment}
\usepackage{etex}
\usepackage{mathrsfs}
\usepackage[english]{babel}

\usepackage[scaled=0.85]{beramono}
\usepackage{tikz-cd}
\usepackage{float}
\usepackage{fancyhdr}

\usepackage[left=0cm,right=0cm,top=2cm,bottom=2cm,margin=3cm]{geometry}
\usepackage{amsthm}
\usepackage{bigints}
\usepackage{tikz}
\usepackage[linktoc=all]{hyperref}
\usepackage[capitalise]{cleveref}
\hypersetup{colorlinks=true,linkcolor=blue,citecolor=red}
\usetikzlibrary{matrix,arrows,decorations.pathmorphing}
\usetikzlibrary{patterns,calc,angles,quotes}
\usepackage{tikz-cd}
\usepackage[utf8]{inputenc}
\tikzset{commutative diagrams/.cd}
\newtheorem{theorem}{Theorem}[subsection]

\newtheorem{lemma}[theorem]{Lemma}

\newtheorem{proposition}[theorem]{Proposition}

\theoremstyle{definition}
\newtheorem{definition}[theorem]{Definition}

\newtheorem{example}[theorem]{Example}

\newtheorem{remark}[theorem]{Remark}

\newtheorem{notation}[theorem]{Notation}

\AddToHook{env/definition/begin}{\crefalias{theorem}{definition}}
\AddToHook{env/proposition/begin}{\crefalias{theorem}{proposition}}
\AddToHook{env/lemma/begin}{\crefalias{theorem}{lemma}}
\AddToHook{env/remark/begin}{\crefalias{theorem}{remark}}
\AddToHook{env/claim/begin}{\crefalias{theorem}{claim}}
\AddToHook{env/corollary/begin}{\crefalias{theorem}{corollary}}
\AddToHook{env/example/begin}{\crefalias{theorem}{example}}
\newcommand{\CrrCopEal}{\op{Corr}(\Ca)^{\otimes}_{\E,\op{all}}}
\newcommand{\CrrCpEal}{\op{Corr}(\Ca)_{\E,\op{all}}}

\newcommand{\Ccc}{(\Ca^{\op{op}})^{\coprod,\op{op}}}

\newcommand{\E}{\mathcal{E}}
\newcommand{\D}{\mathcal{D}}

\newcommand{\K}{\mathcal{K}}

\renewcommand{\P}{\mathcal{P}}
\newcommand{\N}{\mathcal{N}}

\newcommand{\op}[1]{\operatorname{#1}}
\renewcommand{\S}{\mathcal{S}}

\newcommand{\C}{\mathcal{C}}

\newcommand{\U}{\mathcal{U}}

\newcommand{\Schf}{\op{Sch}_{fd}}

\newcommand{\F}{\mathcal{F}}

\newcommand{\dd}{\delta}
\newcommand{\I}{\mathscr{I}}

\newcommand{\Ca}{\mathcal{C}}

\newcommand{\X}{\mathcal{X}}

\newcommand{\bb}{\bullet}

\numberwithin{subsection}{section}

\newcommand{\sset}{\op{Set}_{\Delta}}

\renewcommand{\U}{\mathcal{U}}
\newcommand{\bx}{\square}
\newcommand{\Crt}{\op{Cart}}
\newcommand{\Cart}{\op{\C art}}
\newcommand{\Krt}{\op{\K art}}

\newcommand{\SH}{\mathcal{SH}}
\newcommand{\Y}{\mathcal{Y}}
\fancypagestyle{main}{\fancyhf{}
	\fancyhead[LE,RO]{\scriptsize Six-func on ind-pro cat}
	\fancyhead[RE,LO]{\scriptsize \rightmark }
	\fancyfoot[CE,CO]{\thepage}}

\title{Abstract Six-Functor Formalisms: Extension to Ind- and Pro- Categories and Functorial Cohomological Purity.}
\author{Chirantan Chowdhury}
\begin{document}

\maketitle{}
\begin{abstract}
In this article, we study two consequences of abstract six-functor formalisms. Firstly, we  show that an abstract six-functor formalism can be extended to specific Ind- and Pro- categories of geometric setups. As an application, we can define the motivic stable homotopy theory $\SH(-)$ for ind-pro-algebraic stacks such as the Hecke stack. Secondly, we also show that Cohomological Purity is functorial using the multisimplicial language developed by Liu-Zheng.
    
\end{abstract}
\tableofcontents
\section{Introduction}

Grothendieck introduced six-functor formalisms to understand duality statements from a categorical perspective for cohomology theories.
In recent years, the work of Liu-Zheng (\cite{liu2017enhanced}) and Mann (\cite{mann2022padic}) has contributed to the development of abstract six-functor formalisms using the language of higher category theory. The notion of abstract six-functor formalisms has seen its applications in various topics like motivic homotopy theory (\cite{Khan-Ravi_Generalised_Coh_Stacks},\cite{Chowdhury},\cite{chowdhury2025nonrepresentablesixfunctorformalisms}), arithmetic geometry (\cite{mann2022padic},\cite{berkovichmotives}), and D-modules (\cite{soor2025sixfunctorformalismquasicoherentsheaves}), to name a few. In the context of the  projective limit of algebraic stacks, Yaylali (\cite{yaylali2025rationalmotivesproalgebraicstacks}) has extended the six-functor formalism for rational motives. Motivated by the ideas in Yaylali's work, we extend the abstract six-functor formalism to Ind- and Pro-categories of geometric objects. As an application, we extend the definition of motivic homotopy theory to ind-pro algebraic stacks such as the Hecke Stack. In addition, we show that the usual cohomological purity is functorial using the multisimplicial techniques developed by Liu-Zheng (\cite{liu2017enhanced}).\\

Let us explain the key results of the paper in the following two subsections.

\subsection{Extension of 6FF to Ind- and Pro-Categories.}

Let $(\Ca,\E)$ be a geometric set-up. We deal with the setting of \textit{presentable six-functor formalisms}. A \textit{presentable six-functor formalism} is a  lax-symmetric monoidal functor 
\begin{equation}
    \D : \op{Corr}(\Ca)_{\E,\op{all}} \to \op{Pr}^L_{\op{cl}}
\end{equation} 
 where $\op{Pr}^L_{\op{cl}}$ is the $(\infty,1)$-category of presentable $(\infty,1)$-categories which are Cartesian closed and morphisms admitting left adjoints. 
In order to construct six-functor formalism for ind and pro systems, we only consider presentable six-functor formalism which arise from Nagata set-up (\cite{dauser2025uniquenesssixfunctorformalisms}[Definition 2.1]). In words, a Nagata set-up is a tuple $(\Ca,\E,\I,\P)$ where $\I$ and $\P$ are analogous to open immersion and proper maps, and $\E$ are morphisms which are separated and of finite type. In addition to this, we fix a class of morphisms $\S$ in $\Ca$ for which $f^*$ has a left adjoint $f_{\#}$ which satisfies base change and projection formula. The class $\S$ plays the analogue of smooth morphisms.\\

In \cite{yaylali2025rationalmotivesproalgebraicstacks}, Yaylali constructs rational motives $\op{DM}(-,\mathbf{Q})$ for  pro-algebraic stacks. In his work, for a pro-algebraic stack $\X:=\{ \X_i\}_{i \in I}$ with smooth transition maps (i.e. tame in \cite{yaylali2025rationalmotivesproalgebraicstacks}), he defines
\begin{equation}
    \op{DM}(\X,\mathbf{Q}):= \op{colim}^* \op{DM}(\X_i,\mathbf{Q})
\end{equation}
in $\op{Pr}^L_{\op{cl}}$. He then proceeds to construct exceptional pushforwards and base change. It turns out that one can only construct exceptional pushforwards for maps of inverse systems which are $!$-adjointable. In this case, the base change essentially holds for only projective systems which are indexed by $\mathbf{N}$. An example of a pro-algebraic stack is the classifying stack of the positive loop group $BL^+G$. The transition maps in this case are smooth surjections which fits into this setup.\\

In the case of ind systems, one usually does not consider the pullback extension but the exceptional pullback extension (\cite[Section 2.2]{Richarz_2021}). In other words, for an inductive system of algebraic stacks $\Y:= \{Y_j\}_{j \in J}$, one expects 
\begin{equation}
    \D(\Y):= \op{lim}_{\op{Cat}_{\infty}}^! \D(\Y_j) \cong \op{lim}^!_{\op{Pr}^R}\D(\Y_j)   \cong \op{colim}_{!,\op{Pr}^L} \D(\Y_j).
\end{equation}
where $\D$ is an abstract six-functor formalism  e.g. \'etale sheaves, $\SH(-), \op{DM}(-)$. In this case, one can only get pullback maps for such classes of maps that commute with exceptional pushforward level-wise.  An example of an ind-algebraic stack or an ind-scheme we keep in mind is the affine Grassmanian $\op{Gr}_G$, where the transition maps are closed immersions.  \\

Inspired by the explanation above and the examples, we define the following two categories, along with classes of morphisms, which are the key objects of this paper. 
\begin{definition}[\cref{definition:IndProcatdefinition,defintiion:IndProAdjedgesdefinition}]
    Let $K$ be a filtered $\infty$-category. According to the setup above, we define the following categories:
    \begin{enumerate}
        \item 
        \begin{enumerate}
        \item Let $\op{Pro}^K_{\S}(\Ca)$ be the full subcategory of $\op{Fun}(K,\Ca^{\op{op}})$ spanned by functors $f: K \to \Ca^{\op{op}}$ such that $f(e)\in \S$ for all edges $e \in K$. 
        \item Let $\op{Adj}_!^!(\E)$ be the set of edges $f \in \Delta^1 \times K \to \Ca^{\op{op}}$ in $\op{Pro}^K_{\S}(\Ca)$   with the following conditions: 
    \begin{enumerate}
    \item they are stable under pullbacks in $\op{Pro}^K_{\S}(\Ca)$.
    \item $f|_k : \Delta^1 \to \Ca^{\op{op}}$ belongs to $\E$ for all $k \in K$.
    \item For every $\tau: k \to k'$, the square
    \begin{equation}
    \begin{tikzcd}
        \D(f(0,k')) \arrow[r," p_!"] \arrow[d,"f_{k!}"] & \D(f(0,k)) \arrow[d,"f_{k'!}"]\\
        \D(f(1,k')) \arrow[r,"q_!"] & \D(f(1,k))
        \end{tikzcd}
    \end{equation}
    is horizontally right adjointable.
    \end{enumerate}
    \end{enumerate}
         \item
         \begin{enumerate}
     \item Let $\tilde{\P} \subset \P$ be a subclass of morphisms in $\E$ stable under pullbacks, compositions and contains isomorphisms. Let $\op{Ind}^K_{\tilde{\P}}(\Ca)$ be the full subcategory of $\op{Fun}(K,\Ca)$ spanned by functors $f: K^ \to \Ca$ such that $f(e) \in \E$ for all edges $e \in K$. 
     \item Let $\op{Adj}_!^*(\op{All})$ be the set of edges $f \in \Delta^1 \times K \to \Ca$ in $\op{Ind}^K_{\tilde{P}}(\Ca)$  with the following conditions:

    \begin{enumerate}
    \item they are stable under pullbacks in $\op{Ind}^K_{\tilde{\P}}(\Ca)$.
    \item For every $\tau: k' \to k$, the square  \ given by
    \begin{equation}
    \begin{tikzcd}
        \D(f(1,k)) \arrow[r," p_! \cong p_*"] \arrow[d,"f_{k}^*"] & \D(f(1,k')) \arrow[d,"f_{k'}^*"]\\
        \D(f(0,k)) \arrow[r,"q_! \cong q_*"] & \D(f(0,k'))
        \end{tikzcd}
    \end{equation}
    commutes. 
    \end{enumerate}
    \end{enumerate}
    \end{enumerate}
    
\end{definition}
\begin{remark}
    One sees that objects of $\op{Pro}^K_{\S}(\Ca)$ are a generalized notion of tame pro $K$-algebraic stack defined  in  \cite{yaylali2025rationalmotivesproalgebraicstacks}. In general, the above defintions are motivated from the example of Ind- and Pro- Objects in the setting of Motivic Langlands as studied in \cite{richarzscholbachintersectionmotive} and \cite{zhu2025tamecategoricallocallanglands}.
\end{remark}

The main result of this section is as follows:

\begin{theorem}[\cref{theorem:IndProextensions}]\label{theorem:IntroIndProextension}
Let 
\begin{equation}
    \D : \op{Corr}(\Ca)_{\E,\op{all}} \to \op{Pr}^L_{\op{cl}}
\end{equation}
be a presentable six-functor formalism arising from Nagata set-up. 

\begin{enumerate}
    \item Assume $\S$ satisfies cohomological purity, i.e. the canonical transformation $\op{Pur}_f : f_{\#} \to f_!\Sigma_f$ induces an equivalence $f_{\#} \cong f_!$. Then $\D$ can be extended to a presentable six-functor formalism:
    \begin{equation}
        \D_{\op{pro}} : \op{Corr}(\op{Pro}^K_{\S}(\Ca))_{\op{Adj}^!_!(\op{\E}),\op{All}} \to \op{Pr}^L_{\op{cl}}
    \end{equation}
    \item $\D$ can be extended to a presentable six-functor formalism
    \begin{equation}
        \D_{\op{ind}}: \op{Corr}(\op{Ind}^K_{\tilde{\P}}(\Ca))_{\E,\op{Adj}^*_!(\op{All})} \to \op{Pr}^L_{\op{cl}}
    \end{equation}
\end{enumerate}
\end{theorem}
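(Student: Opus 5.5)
The plan is to use a construction principle for six-functor formalisms in the style of Mann/Liu--Zheng. Such a formalism $\D' : \op{Corr}(\Ca')_{\E',\op{all}} \to \op{Pr}^L_{\op{cl}}$ is determined by two pieces of data. The first is a lax symmetric monoidal functor $\D'^* : \Ca'^{\op{op}} \to \op{Pr}^L_{\op{cl}}$ (for the ``all'' class). The second is a functor $\D'_! $ on the wide subcategory of $\E'$-edges. These must be compatible through base change and the projection formula. I will use a Nagata-type criterion: split $\E'$ into ``open'' and ``proper'' parts, verify left/right adjointability of the relevant squares, and glue. For the ind case, the roles of $*$ and $!$ are swapped, which is why the classes appear in the opposite slots.

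\textbf{Step 1 (objects).} For $\X=\{\X_k\}\in \op{Pro}^K_{\S}(\Ca)$, set $\D_{\op{pro}}(\X):=\op{colim}_{k\in K}\D(\X_k)$ in $\op{Pr}^L_{\op{cl}}$, with transition functors the pullbacks $p^*$ along the $\S$-maps. Cohomological purity gives $p^*\cong p^!$ up to the invertible twist $\Sigma_p$, so the left adjoints $p_\#\cong p_!\Sigma_p$ are available. Filtered colimits in $\op{Pr}^L$ are computed as limits in $\op{Pr}^R$ along the right adjoints. Hence $\D_{\op{pro}}(\X)\simeq \lim_k \D(\X_k)$ along $p_*$, and also, after absorbing twists, along $p_\#$-adjoint data. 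For $\Y\in\op{Ind}^K_{\tilde\P}(\Ca)$, set $\D_{\op{ind}}(\Y):=\op{colim}_{!}\D(\Y_k)\simeq \lim^!\D(\Y_k)$, using that $q_!\cong q_*$ for $q\in\tilde\P\subset\P$. Closedness, i.e.\ a closed symmetric monoidal structure on the colimit, follows because the transition functors are symmetric monoidal in the pro case. In the ind case I will instead obtain it levelwise via $\otimes^!$-type compatibility. If that route fails, I will equip the colimit with its induced monoidal structure via $\op{Pr}^L$ colimits of $\op{CAlg}$ objects.

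\textbf{Step 2 (the two functorialities).} In the pro case, $*$-pullback along an arbitrary map of pro-systems is obtained by levelwise $f_k^*$ on the colimit diagrams. This is functorial because $\op{colim}$ is a functor on $\op{Fun}(K,\op{Pr}^L)$ and $f_k^*$ commutes with $p^*$. For $f\in\op{Adj}^!_!(\E)$ I define $f_!$ levelwise. Condition (c), horizontal right adjointability, is precisely what lets the $f_{k!}$ commute with the right adjoints $p^!$ of the transition maps $p_!$. The levelwise $f_{k!}$ therefore assemble into a map of $\op{Pr}^R$-limits, i.e.\ of the colimits. The ind case is dual. Here $f^*$ for $f\in\op{Adj}^*_!(\op{All})$ is built from the commuting squares in the definition, and $f_!$ for $f\in\E$ is levelwise on $\op{colim}_!$. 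Functoriality in $f$ is obtained at once by running these constructions in families: apply the colimit functor to the $\D$ restricted along $\Delta^n\times K$.

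\textbf{Step 3 (base change and projection formula, then gluing).} Pullbacks in $\op{Pro}^K_\S(\Ca)$ and $\op{Ind}^K_{\tilde\P}(\Ca)$ are computed levelwise, since condition (a) ensures the classes are stable under them. Base change and the projection formula therefore hold levelwise by the original $\D$. They pass to the (co)limit because the comparison maps are natural in $k$ and equivalences detect termwise in a limit of $\op{Pr}^R$. To get an honest functor out of the correspondence category, I will realize the whole construction multisimplicially. I take the Cartesian-gluing/Liu--Zheng $\delta^*$-construction for $\D$ and apply it to $\op{Corr}(\Fun(K,\Ca))$-type diagrams. Then I compose with $\op{colim}_K$, which is lax symmetric monoidal on $\op{Fun}(K,\op{Pr}^L_{\op{cl}})$, to descend to the level of pro/ind objects.

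I expect the main obstacle to be this last coherence step, not the pointwise identifications. Specifically, one must show that the levelwise $f_!$ defined via adjointability really extends to a functor from the correspondence category, with all higher base-change homotopies. My first attempt will reduce to the Nagata criterion of \cite{dauser2025uniquenesssixfunctorformalisms}. Under that criterion it suffices to check that $*$-pullback lands in $\op{Pr}^L_{\op{cl}}$, and that for $\I$- resp.\ $\P$-type edges the $!$-functor is the left resp.\ right adjoint of pullback satisfying base change. The required adjoints on pro/ind objects exist exactly because of the adjointability conditions in \cref{defintiion:IndProAdjedgesdefinition}.
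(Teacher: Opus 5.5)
There is a genuine gap, located exactly at the coherence step you flag, and neither of your two ways around it works as stated.

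Your multisimplicial route (apply the Liu--Zheng construction of $\D$ to $\op{Fun}(K,\Ca)$-shaped diagrams, and likewise ``apply $\D$ restricted along $\Delta^n\times K$'' in Step 2) presupposes that a correspondence of $K$-systems gives a $K$-indexed diagram in $\op{Corr}(\Ca)_{\E,\op{all}}$. It does not. For $f\in\op{Adj}^!_!(\E)$ and $k\to k'$, the square formed by $f_k$, $f_{k'}$ and the transition maps is commutative but in general not Cartesian. This already happens for a map $X_\bullet\to Y$ to a constant system, as in \cref{example:proandindadjointablemorphisms}. So the two composites of this square in $\op{Corr}(\Ca)$ are different correspondences, and the identification $q^*f_{k!}\simeq f_{k'!}p^*$ you need is not part of the data of $\D$.

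That identification is not base change. It is the extra input encoded in $\op{Adj}$, and it comes from $p^!$-adjointability only after using purity to rewrite $\op{Adj}^!_!(\E)\cong\op{Adj}^*_!(\E)$. So in Step 2 the point is this mate identity in $\op{Pr}^L$, not a map of $\op{Pr}^R$-limits; the $f_{k!}$ are not right adjoints in general.

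Your Nagata-criterion route fails for a different reason. It needs $\op{Pro}^K_{\S}(\Ca)$ with $\op{Adj}^!_!(\E)$ to be a Nagata setup. That means every adjointable map of pro-systems must factor as an $\I$-type map followed by a $\P$-type map of pro-systems. The middle object must still have transition maps in $\S$, and both factors must again be adjointable. Levelwise compactifications are in general neither functorial in $k$ nor have transition maps in $\S$, and the adjointability conditions do not produce such factorizations. In the ind case the criterion does not even apply as stated, since the pullback class $\op{Adj}^*_!(\op{All})$ is not all maps.

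The missing idea, which is how the paper argues, is to first enlarge the formalism on $\Ca$ itself. One adds a third multisimplicial direction for the transition maps: $\S$ acting by $(-)^*$, resp.\ $\tilde{\P}$ acting by $(-)_*\simeq(-)_!$. The tiles of this direction with the $\E$-direction are exactly the $\D$-commutative, non-Cartesian squares. One then constructs $\D_{(\Ca,\S,\E)}\colon\dd^*_{3,\{2,3\}}\Ca^{\op{cart}^*_!}_{\E,\S,\op{all}}\to\op{Pr}^L_{\op{cl}}$ (resp.\ its $\op{cart}^*_*$ analogue) by rerunning partial adjoints and compactification with a four-direction tiling; see \cref{Proposition: corrextendedformalismincludingsmooth} and \cref{Proposition: corrformalismextendingproper}.

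Next, one replaces $\op{Corr}$ by the simplicial set of Cartesian squares. A purely combinatorial re-indexing map $\alpha_{\op{Pro}}$ then turns a multisimplex of correspondences of $K$-systems into a $K$-indexed family of simplices of this triple simplicial set. Finally one composes with $\D_{(\Ca,\S,\E)}$ and with $\op{colim}_K$ in $\op{Pr}^L$.

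Your Steps 1 and 3 (objects as colimits, levelwise base change and projection formula) are consistent with this. But without the auxiliary three-directional formalism there is no coherent $K$-diagram of functors to take the colimit of.
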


\begin{remark}
\begin{enumerate}
 \item    The proof of the theorem involves using the multisimplicial set formalism developed by Liu-Zheng (\cite{liu2017enhanced}). 
\item For an object $\X:=\{\X_k\}_{k \in K}$ in $\op{Pro}^K_{\S}(\Ca)$, we have 
\begin{equation}
    \D_{\op{pro}}(\X) \cong \op{colim}_{\op{Pr}^L_{\op{cl}}}^* \D(\X_k)
\end{equation}
Similarly, for an object $\Y:= \{ \Y_k\}_{k \in K}$ in $\op{Ind}^K_{\tilde{\P}}(\Ca)$, we have
\begin{equation}
    \D_{\op{ind}}(\Y) \cong \op{lim}_{\op{Pr}^L_{\op{cl}}}^! \D(\Y_k) \cong \op{lim}_{\op{Cat}_{\infty}}^! \D(\Y_k)
\end{equation}
Thus, the pro-extensions  and ind-extensions align with what we discussed earlier.
\item The above extension procedure can also be done to other geometric setups which arise as extension from geometric setups coming from Nagata Setup. Such extensions are addressed in \cref{theorem:proextensiongeoandeceptionalpairs} and \cref{Theorem:ExtendingIndGeneralSetup}.

\item  The $\D_{\op{pro}}$ functor extend the results of \cite{yaylali2025rationalmotivesproalgebraicstacks} to arbitrary simplicial set $K$ rather than $\mathbf{N}$.  The advantage of using the the multisimplicial sets allows us to circumvent using arbitrary simplicial sets. In particular, Yaylali crucially uses the indexing set $\mathbf{N}$ to show that
\begin{equation}
    \op{colim}^!\D(\X_k) \cong \op{colim}^*\D(\X_k) .
\end{equation}
Using the functorial purity that we will discuss in the next subsection, we extend this equivalence to an arbitrary indexing set $K$ (\cref{uppershriekupperstarsmoothequivalence}).

    \end{enumerate}
\end{remark}
As an application, we can extend the six-functor formalism for motivic stable homotopy theory $\SH(-)$ to ind-pro algebraic stacks. In this article, we consider extending the Lisse-extend cohomology theory $\SH(-)$. The six-functor formalism of algebraic stacks for such theories has been developed by Chowdhury (\cite{Chowdhury}),Chowdhury-D'Angelo (\cite{chowdhury2025nonrepresentablesixfunctorformalisms}), and Khan-Ravi (\cite{Khan-Ravi_Generalised_Coh_Stacks}).\\

\begin{notation}Let 
\begin{enumerate}
\item $K$ be a filtered $\infty$-category admitting an initial object.
\item $\op{IndPro}^K_{\op{cl,sm}}(\op{AlgSt}):= \op{Ind}^K_{\op{Adj}^!_!(\op{Pro}(\op{cl}))}(\op{Ind}^K_{\op{sm}}(\Ca))$ be the category of inductive system of projective system of algebraic stacks with smooth and closed transition maps. 
\item Let $\op{IndPro}^!_!(\E):= \op{Adj}^!_!(\E)$ and $\op{Ind}^*_!(\op{All}):= \op{Adj}^*_!(\op{All})$.
\end{enumerate}
\end{notation}

Applying \cref{theorem:IntroIndProextension}, to the abstract six-functor formalisms $\SH(-)$, we get the following 
\begin{theorem}[\cref{theorem:sixfuncindproSH}]
The abstract six-functor formalism $\SH(-)$ extend to 

\begin{equation}
  \SH(-) : \op{Corr}(\op{IndPro}^K_{\op{cl,sm}}(\op{AlgSt}))_{\op{IndPro}^!_!(\E),\op{Ind}^*_!(\op{All})} \to \op{Pr}^L_{\op{cl}}
\end{equation}
\end{theorem}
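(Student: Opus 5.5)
The plan is to apply \cref{theorem:IntroIndProextension} twice: first the pro-part, then the ind-part. The output of the first step is a presentable six-functor formalism on a new geometric set-up, and \cref{Theorem:ExtendingIndGeneralSetup} lets us use it as the input for the second step.

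\textbf{Step 1: the pro-extension.} Start from the six-functor formalism
\begin{equation*}
\SH(-) : \op{Corr}(\op{AlgSt})_{\E,\op{all}} \to \op{Pr}^L_{\op{cl}}
\end{equation*}
of \cite{Chowdhury,chowdhury2025nonrepresentablesixfunctorformalisms,Khan-Ravi_Generalised_Coh_Stacks}. To apply part (1) of \cref{theorem:IntroIndProextension} we need two inputs.
\begin{itemize}
\item A Nagata set-up whose exceptional functors recover $\SH$. I would take the representable part: separated finite type morphisms, with open immersions as $\I$ and proper maps as $\P$. The extension to the full stacky $\E$ then comes from the geometric-set-up version, \cref{theorem:proextensiongeoandeceptionalpairs}.
\item Cohomological purity for $\S = \op{sm}$: for smooth $f$, the map $\op{Pur}_f : f_\# \to f_!\Sigma_f$ is an equivalence. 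In $\SH$ this is the ambidexterity/purity theorem (Ayoub, Cisinski--Déglise), extended to stacks in the cited works. Since $\Sigma_f$ is invertible, this gives the identification $f_\#\cong f_!$ up to twist that the theorem needs.
\end{itemize}
This produces
\begin{equation*}
\SH_{\op{pro}} : \op{Corr}(\op{Pro}^K_{\op{sm}}(\op{AlgSt}))_{\op{Adj}^!_!(\E),\op{All}} \to \op{Pr}^L_{\op{cl}}.
\end{equation*}

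\textbf{Step 2: the ind-extension.} Now treat $(\op{Pro}^K_{\op{sm}}(\op{AlgSt}), \op{Adj}^!_!(\E))$ as the base geometric set-up and take $\tilde{\P} := \op{Adj}^!_!(\op{Pro}(\op{cl}))$, the levelwise closed immersions satisfying the adjointability condition. Applying part (2) of \cref{theorem:IntroIndProextension} in its general form, \cref{Theorem:ExtendingIndGeneralSetup}, requires checking the following properties of $\tilde{\P}$.
\begin{itemize}
\item \emph{Stable under pullback and composition, and contains isomorphisms.} Levelwise closed immersions have these properties, and the adjointable edges are stable under pullback by definition; composites of adjointable squares are adjointable.
\item \emph{Exceptional pushforward equals pushforward.} We need $p_! \cong p_*$ for $p \in \tilde{\P}$, and this should be computable levelwise. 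The reason is that $p_!$ on $\SH_{\op{pro}}$ is induced from the levelwise $p_{k!} = p_{k*}$ using the $*$-colimit description $\SH_{\op{pro}}(\X)\cong \op{colim}^*\SH(\X_k)$. This is where the smooth base change for closed immersions enters: it makes the colimit of the right adjoints agree with the right adjoint of the colimit.
\end{itemize}
The output is a formalism on
\begin{equation*}
\op{Corr}(\op{Ind}^K_{\tilde{\P}}(\op{Pro}^K_{\op{sm}}))_{\op{Adj}^!_!,\op{Adj}^*_!(\op{All})},
\end{equation*}
which is the statement once we unwind $\op{IndPro}^!_!(\E)$ and $\op{Ind}^*_!(\op{All})$.

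\textbf{Main obstacle.} The hard part is that the ind-step formally needs its input to "arise from a Nagata set-up". The pro-extended category is not obviously Nagata, since compactifications of adjointable maps of pro-systems are unclear. I would first try to avoid this entirely by relying on \cref{Theorem:ExtendingIndGeneralSetup}, which is stated for set-ups obtained as extensions of Nagata ones. I would then check its hypotheses directly:
\begin{itemize}
\item $\tilde{\P}\subset \op{Adj}^!_!(\E)$;
\item $p_!$ is right adjointable along $\tilde{\P}$, via levelwise proper base change plus the compatibility of the adjoint with $\op{colim}^*$.
\end{itemize}
The assumption that $K$ has an initial object is used to make these levelwise-to-global identifications of $p_!$ well defined.
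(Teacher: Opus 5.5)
Your proposal is correct and follows essentially the same route as the paper. You first pro-extend $\SH$ using the Nagata set-up $(\op{Sch},\op{sft},\op{open},\op{prop})$, purity for smooth maps, and the nice geometric pair from schemes to algebraic stacks (\cref{theorem:proextensiongeoandeceptionalpairs}), and then apply \cref{Theorem:ExtendingIndGeneralSetup} with $\tilde{\P}$ the adjointable levelwise closed immersions of pro-stacks, for which $p_!\cong p_*$.

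Two small corrections. First, \cref{theorem:proextensiongeoandeceptionalpairs} also requires the continuity property of $\SH$ and descent along the atlas covers, and you should state these. Second, \cref{Theorem:ExtendingIndGeneralSetup} has no Nagata hypothesis at all, so your ``main obstacle'' does not actually arise.
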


\begin{remark}
    The above extension procedure also applies to the six-functor formalism $\op{DM}(-)$, which assembles Yaylali's result together and also generalizes to an arbitrary indexing set.
\end{remark}

\subsection{Functorial Cohomological Purity.}
In the setting of abstract six-functor formalism, for a morphism $f: X \to Y$ in $\E \cap \S$, we have the transformation 
\begin{equation}
    \op{Pur}_f : f_{\#} \to f_! \circ \Sigma_f \,\,;\,\, \Sigma_f:= p_{1\#}\delta_{f!}
\end{equation}
which arises from the exchange transformation applied to the diagram
\begin{equation}
    \begin{tikzcd}
        X \arrow[ddr,bend right=50,equal] \arrow[drr,bend left =50, equal] \arrow[dr,"\delta_f"] & {} & {} \\
        {} & X \times_Y X \arrow[r,"p_1"] \arrow[d,"p_2"] & X \arrow[d,"f"] \\
        {} & X \arrow[r,"f"] & Y
    \end{tikzcd}
\end{equation}

We say that $\D$ satisfies \textit{cohomological purity} if for every $f \in \E \cap \S$, $\op{Pur}_f$ and $\Sigma_f$ are equivalences. Thus we have an equivalence $f_{\#} \cong f_!$. The goal of this section is to upgrade this equivalence to an equivalence of functors. The main statement of this section is the following:
\begin{theorem}[\cref{Theorem: FunctorialPurityFullForm}]\label{Theorem: FunctorialPurityFullFormIntro}
  Let $(\Ca,\E,\I,\P)$ be a Nagata setup and let 
    \begin{equation}
        \D:  \op{Corr}(\Ca)^{\otimes}_{\E,\op{all}} \to \op{Cat}_{\infty}
         \end{equation}
        be an abstract six-functor formalism  arising from a Nagata set-up with the following properties :
        \begin{enumerate}
            \item There exists a set of edges $\mathcal{S}$ stable under pullbacks and compositions containing $\I$ such that $f^*$ for $f \in \S$ has a left adjoint $f_{\#}$ which satisfies base change and projection formula.
            \item (\textit{Ambidexterity:}) The natural transformation $\op{Ex}_{\#!}$ is an equivalence for squares :
            \begin{equation}
                \begin{tikzcd}
                    X' \arrow[r,"f'" ] \arrow[d,"g'"] & X \arrow[d,"g"]\\
                    X \arrow[r,"f"] & Y
                \end{tikzcd}
            \end{equation}
            where $f,f' \in \S, g,g' \in \E$.
         \end{enumerate}
        Then the cohomological purity equivalence $f_{\#} \to f_!\Sigma_f$ can be upgraded to a natural transformation :
        \begin{equation}
            \op{Pur}_{\D} : \Delta^1 \times \Ca_{\E \cap \S} \to \op{Cat}_{\infty}
        \end{equation}
        which is a natural transformation between $\D_{\#}$ and $\D^{\Sigma}_!$ where $\D^{\Sigma}_!$ on the level of morphisms sends $f \mapsto f_!\Sigma_f$.
\end{theorem}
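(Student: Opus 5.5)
To upgrade $\op{Pur}_f$ to a natural transformation, I would realize both $\D_{\#}$ and $\D^{\Sigma}_!$ as restrictions of a single functor out of a multisimplicial set of squares, in the style of Liu--Zheng. Concretely, I would encode the exchange transformation $\op{Ex}_{\#!}$ functorially. Restricting $\D$ along the inclusion of $\Ca_{\E\cap\S}$ into correspondences gives $\D_!$ and $\D^*$ on $\Ca_{\E\cap\S}$. Since every $f\in\S$ has $f^*$ with a left adjoint satisfying base change, the relative adjoint functor theorem (Lurie, HA 7.3.2.6) applied to the cocartesian fibration classifying $\D^*|_{\S}$ produces $\D_{\#}$ as a functor $\Ca_{\S}\to\op{Cat}_{\infty}$.

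The key object is the bisimplicial set $\delta^*_2(\Ca)^{\op{cart}}_{\E,\S}$ of commutative squares whose vertical edges lie in $\E$ and horizontal edges in $\S$. On it I would define a functor to $\op{Cat}_\infty$ sending vertical edges to $g_!$ and horizontal edges to $f^*$. This is simply the restriction of $\D$ along the map to $\op{Corr}(\Ca)_{\E,\op{all}}$ that sends a square to the correspondence it defines.

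Passing horizontal arrows to left adjoints, using the Liu--Zheng partial adjoint construction, yields a functor on squares sending horizontal edges to $f_{\#}$ and vertical ones to $g_!$. On each square the $2$-cell is $\op{Ex}_{\#!}$. By the ambidexterity hypothesis (2) these $2$-cells are invertible, so the functor factors through the bisimplicial nerve of $\op{Cat}_\infty$ with invertible $2$-cells. Equivalently, it defines a functor $\Delta^1\times\Delta^1$-shaped on each square.

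Next I would restrict along the diagonal-type map $\Ca_{\E\cap\S}\to \delta^*_2(\Ca)^{\op{cart}}_{\E,\S}$ that sends $f:X\to Y$ to the purity diagram. In that diagram the square $X\times_YX$ sits over $Y$, with $\delta_f$ attached. The construction of $\delta_f$ and $X\times_Y X$ is functorial in $f$ via the fiber product functor $\op{Fun}(\Lambda^2_2,\Ca)\to\op{Fun}(\Delta^1\times\Delta^1,\Ca)$, which is a trivial fibration onto pullback squares. So I can choose a functor $\Ca_{\E\cap\S}\to\op{Fun}(\Delta^1,\text{purity diagrams})$ coherently.

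Composing, each edge $f$ yields a triangle comparing $f_{\#}=f_\#\,\op{id}_{!}$ with $f_!\,p_{1\#}\delta_{f!}=f_!\Sigma_f$. This composite is exactly $\op{Pur}_f$. The two boundary restrictions are $\D_{\#}$ and $\D^\Sigma_!$, where the functoriality of $\D^\Sigma_!$ is forced by this construction. The resulting $\Delta^1\times\Ca_{\E\cap\S}\to\op{Cat}_\infty$ is then $\op{Pur}_{\D}$.

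The main obstacle is making the partial adjunction step work: obtaining $\#$ on horizontal edges while keeping $!$ on vertical ones, coherently with all the higher simplices. I would first try the Liu--Zheng result that a functor from a bisimplicial set taking values in right adjoints along one direction, with adjointable squares, admits a partial left adjoint in that direction (their Proposition on partial adjoints, used for $f_!\dashv f^!$). Here adjointability is precisely the invertibility of $\op{Ex}_{\#!}$, and existence of the adjoints is hypothesis (1). A secondary concern is checking that composition of purity diagrams along $X\xrightarrow{f}Y\xrightarrow{g}Z$ is compatible. I would handle this by building the functor on $\Ca_{\E\cap\S}$ simplex by simplex from iterated fiber products and invoking the trivial-fibration argument to avoid choices.
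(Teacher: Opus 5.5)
\textbf{There is a genuine gap, at the restriction step.} Your first step matches the paper. On ambidextrous cartesian squares, partial adjoints give $\D^{\op{cart}}_{\#!}$, sending $\S$-edges to $(-)_\#$ and $\E$-edges to $(-)_!$, and adjointability is exactly the invertibility of $\op{Ex}_{\#!}$. Obtaining $\op{Pur}_{\D}$ by restricting one functor on squares along a map out of $\Delta^1\times\Ca_{\E\cap\S}$ is also the shape of the paper's $\op{Pur}'_{\D}$.

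The failure is the step ``restrict along $\Ca_{\E\cap\S}\to\dd^*_2\Ca^{\op{cart}}_{\S,\E}$, $f\mapsto P_f$''. Here $P_f$ is the purity square, with $\op{id}_X$ on its top and left edges and $f$ on its right and bottom edges. An edge of $\dd^*_2\Ca^{\op{cart}}_{\S,\E}$ is a single cartesian square. But $P_f$ is not cartesian: its pullback corner is $X\times_YX$, not $X$. Moreover $f_!\Sigma_f=f_!\,p_{1\#}\,\delta_{f!}$ is a composite of two edges (the $\delta_f$-square and the $X\times_YX$-square), not the value of one. So the map you restrict along does not exist.

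The functoriality of $f\mapsto f_!\Sigma_f$ is therefore what must be constructed, not something the construction forces. This means the identifications $g_!\Sigma_g\,f_!\Sigma_f\simeq(gf)_!\Sigma_{gf}$, which involve $X\times_YX$, $Y\times_ZY$, $X\times_ZX$ and base change between them, together with all higher coherences. The trivial fibration you invoke gives functoriality of $X\times_YX$ in morphisms of cospans, not along composition in $\Ca_{\E\cap\S}$. The paper handles this step in two parts:
\begin{enumerate}
\item It sends an $n$-simplex to the grid $\op{Sq}(\tau)$ with entries $X_{\min(i,j)}$. This grid lives in the larger simplicial set $\dd^*_2\Ca^R_{\S,\E}$ of commutative squares whose cartesian part lies in $Q$.
\item It extends $\D^{\op{cart}}_{\#!}$ along $p_{\op{cart}}$ (\cref{thmBvariant}), using $\Cart^n$, $\Krt(\tau)$, $\boxplus^n_{\op{cart}}$ and a modified $\epsilon^{\S}_n$. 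The modification is needed because $\delta_f\in\E$ but $\delta_f\notin\S$.
\end{enumerate}

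\textbf{Reading your simplicial set as all commutative squares does not rescue the argument, and the same obstruction affects the paper's extension.} Let $\phi_1,\phi_2\colon[2]\times[2]\to[1]$ be the monotone maps with $\phi_1(i,j)=1$ iff $i=2$ and $j\geq 1$, and $\phi_2(i,j)=1$ iff $j=2$ and $i\geq 1$. Every subrectangle of the grids $f\circ\phi_1$ and $f\circ\phi_2$ is either a degenerate cartesian square or $P_f$. So their diagonals are $2$-simplices of $\dd^*_2\Ca^R_{\S,\E}$ whose long edge is $P_f$. Their short edges are $\op{id}_X$ followed by $f$ in one direction, respectively in the other.

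Consequently, any functor on this simplicial set extending $\D^{\op{cart}}_{\#!}$ sends $P_f$ both to $f_!$ and to $f_\#$. This forces $f_!\simeq f_\#$ for all $f\in\E\cap\S$. That is false for $\SH$ with $f\colon\mathbb{A}^1_k\to\op{Spec}k$: there $f_\#\simeq f_!\Sigma^{2,1}$, and the two functors already differ on the unit. In Liu--Zheng's setting no contradiction arises, because for a monomorphism in $\E_1\cap\E_2$ the analogous square is cartesian, so the identification is already part of the given data.

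The paper's own $n=1$ simplices for $\op{Pur}'_{\D}$ contain exactly the grid $f\circ\phi_1$, so this caveat applies to its argument too. Whatever replaces your restriction step must build the coherences of $f\mapsto f_!\Sigma_f$ out of cartesian data, meaning iterated fibre products and diagonals with $\delta_f$ used only in the $\E$-direction, rather than by evaluating a single functor on $P_f$. That construction is what your sketch is missing.
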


The main content of this proof is to functorially construct a functor $\D^{\Sigma}_! : \Ca_{\E \cap \S} \to \op{Cat}_{\infty}$ which sends $X \mapsto \D(X)$ and $f \mapsto f_!\Sigma_f$. This relies on  proving a variant of the compactification theorem proved by Liu-Zheng (\cite[Theorem 1.5.1]{liu2017enhanced} which is also reproved in \cite[Thm B]{chowdhury2025sixfunctorformalismsii}) in the setting of multi-simplicial sets. 

\begin{remark}
    It is, however, expected to construct the natural transformation in general without $\op{Ex}_{\#!}$ to be an equivalence. This only shall make sense when the target is $\op{Cat}_{(\infty,2)}$. (Refer to \cref{remark:extendingpuritytocat2} for more details).
\end{remark}

\subsection{Content of the Paper.}
Let us describe the contents of this paper:
\begin{enumerate}
    \item In Section 2, we recall the important notions of  correspondences and presentable six-functor formalisms. We also briefly describe how the convolution product is encoded in the setting of the abstract six-functor formalism.
    \item In Section 3, we describe the  construction of functorial cohomological purity. The major part of this section is to construct $\D^{\Sigma}_!$, which leads us to prove the variant of the $\infty$-compactification theorem. The construction of $\op{Pur}_{\D}$ follows easily from that. 
    \item In Section 4, we extend our abstract six-functor formalisms to inductive and projective systems of our objects in a geometric setup. We introduce the setup, definitions and prove some technical propositions which help us to prove the extension theorems (\cref{theorem:IndProextensions}).
    \item In the last section, we apply the extension results in the setting of  motivic homotopy theory.
\end{enumerate}

\subsection*{Acknowledgements}

The paper was written while the author was a Postdoctoral researcher under Prof. Dr. Timo Richarz at the University of TU Darmstadt. C.Chowdhury acknowledges support (through Timo Richarz) by the European Research Council (ERC) under Horizon Europe (grant agreement nº 101040935), by the Deutsche Forschungsgemeinschaft (DFG, German Research Foundation) TRR 326 \textit{Geometry and Arithmetic of Uniformized Structures}, project number 444845124, and the LOEWE professorship in Algebra, project number LOEWE/4b//519/05/01.002(0004)/87. \\

The author would like to thank Can Yaylali for discussions regarding his work on rational motives for pro-algebraic stacks. The author would  also like to thank Alessandro D'Angelo, R{\i}zacan \c{C}ilo\u{g}lu, and Timo Richarz for helpful discussions regarding the paper. 

\subsection*{Conventions}

The paper heavily relies on the development on Abstract six-functor formalisms developed by Liu-Zheng and Mann. We shall be  referencing the language of multi-simplicial sets developed by Liu-Zheng in \cite[Section 1.3]{liu2017enhanced} and which is revised in \cite[Section 2]{chowdhury2025sixfunctorformalismsii}. We shall freely use the language of $\infty$-categories developed by Lurie in (\cite{HTT},\cite{HA},\cite{kerodon} and \cite{SAG}).

\section{Correspondences and Presentable Six-Functor Formalisms.}

In this section, we recall the definitions and relevant statements concerning the $(\infty,1)$-category of correspondences and the notion of abstract six-functor formalisms formalized by Liu-Zheng and Mann. We impose the presentability condition on the formalism by imposing that it is valued in $\op{Pr}^L_{\op{cl}}$. Lastly, we recall the Nagata set-up and the construction theorem of such a formalism. This plays a crucial role in proving the extension theorems in Section 4.  
    \subsection{ The $(\infty,1)$-category of correspondences..}
    \begin{definition}\cite[Definition A.5.1]{mann2022padic}
        A \textit{geometric setup} is a pair $(\Ca,\E)$ where $\Ca$ is an $\infty$-category and $E$ is a homotopy class of edges in $\Ca$ satisfying
        \begin{enumerate}
            \item $\E$ contains all isomorphisms and is stable under compositions,

           \item Pullbacks of $\E$ exist and remain in $\E$.
        \end{enumerate}
        We call such a class $\E$ to be \textit{weakly stable}. In addition to the above conditions, if $\E$ satisfies the following property :  for two composable morphisms $\alpha_2,\alpha_1 \in \Ca$ with $\alpha_1 \in E$ then, $\alpha_2 \in \E \leftrightarrow \alpha_2 \circ \alpha_1 \in \E$, we call $\E$ to be \textit{admissible}. 
    \end{definition}
    \begin{definition}\cite[Definition A.5.2]{mann2022padic}
        \begin{enumerate}
            \item Let $C(\Delta^n) \subset \Delta^n \times (\Delta^n)^{op}$ be the full-subcategory spanned by $([i],[j])$ with $i \le j$.
            \item An edge in $C(\Delta^n)$ is \textit{vertical} (resp. horizontal) if the projection to the second (first) factor is degenerate.
            \item A square in $C(\Delta^n)$ is \textit{exact} if it is both a pullback and a pushout square.
            \item For any simplicial set $K$, we define 
            \begin{equation*}
                C(K): = \op{colim}_{(n,\sigma) \in \Delta_{/K}}C(\Delta^n).
            \end{equation*}
            Thus $C$ can be visualized as an endofunctor on the category of simplicial sets. It admits a right adjoint 
            \begin{equation*}
                B: \sset \to \sset
            \end{equation*}
            defined by 
            \begin{equation*}
                K \mapsto B(K) =\{B(K)_n:= \op{Hom}_{\sset}(C(\Delta^n),K) \}.
            \end{equation*}
            \item Given a geometric setup $(\Ca,\E)$, define 
            \begin{equation*}
                \op{Corr}(\Ca)_{E,\op{all}} \subset B(\Ca)
            \end{equation*}
            to be the sub-simplicial set whose $n$ simplices are maps $C(\Delta^n) \to \Ca$ which sends 
            \begin{enumerate}
                \item vertical edges to $E$,
                \item exact squares to pullback squares.
            \end{enumerate}
        \end{enumerate}
    \end{definition}
 
    \begin{remark}
Let $(\Ca,\E)$ be a geometric setup. Then the lower simplices of $\op{Corr}(\Ca)_{\E,\op{all}}$ look like as follows:
     \begin{itemize}
         \item The $0$-simplices are objects of $\Ca$.
         \item A $1$-simplex i.e an edge between $X_0$ and $X_1$ where $X_0,X_1 \in \Ca$ is a diagram of the form :
         \begin{equation*}
             \begin{tikzcd}
                 X_0 & X_{01}\arrow[l] \arrow[d,"f"] \\
                 {} & X_1
             \end{tikzcd}
         \end{equation*}
         where $f \in E$.
         \item A $2$-simplex in $\op{Corr}(\Ca)_{\E,\op{all}}$ looks like as follows:
         \begin{equation*}
             \begin{tikzcd}
                 X_0 & X_{01} \arrow[l] \arrow[d,"f"]  \arrow[dr, phantom, "\square"]  & X_{02} \arrow[l] \arrow[d,"g"] \\
                 {} & X_{11}  & X_{12} \arrow[d,"h"] \arrow[l] \\
                 {} & {} & X_{22}
             \end{tikzcd}
         \end{equation*}
         where $f,g,h\in E$ and $\square$ is a pullback square.
     \end{itemize}
         The $\infty$-category $\op{Corr}(\Ca)_{\E,\op{all}}$ admits the following canonical maps :
    \begin{equation*}
        \Ca_{\E} \to \op{Corr}(\Ca)_{\E,\op{all}} \quad,\quad \Ca^{op} \to \op{Corr}(\Ca)_{\E,\op{all}}
    \end{equation*}

         \end{remark}

       We list some well known facts about the $\op{Corr}(\Ca)_{\E,\op{all}}$ 

       \begin{proposition}\cite[Section 4]{chowdhury2025sixfunctorformalismsiiiconstruction}
       \begin{enumerate}
           \item  The simplicial set $\op{Corr}(\Ca)_{\E,\op{all}}$ is an $(\infty,1)$-category. 
           \item  Suppose $\Ca$ admits finite coproducts. Then the map $\Ca^{\op{op}} \to \op{Corr}(\Ca)_{\E,\op{all}}$ preserves finite products.
           \end{enumerate}
       \end{proposition}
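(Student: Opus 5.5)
We prove the two assertions separately. For the first, we use the standard model for $\op{Corr}(\Ca)_{\E,\op{all}}$: a simplicial subset of $B(\Ca)$ cut out by conditions on $C(\Delta^n)$. We show it is an $\infty$-category by verifying inner horn filling, in the form of a lifting property against $\Lambda^n_i \subset \Delta^n$ for $0<i<n$.

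By adjunction, a lifting problem for $\Lambda^n_i \to \op{Corr}(\Ca)_{\E,\op{all}}$ becomes an extension problem along $C(\Lambda^n_i) \to C(\Delta^n)$. Here $C(\Lambda^n_i)$ is the union of the subposets $C(d_j\Delta^n)$ for $j \neq i$. The one vertex missing from this union is the corner $(0,n)$, together with any simplices that pass through it; I will check this carefully on the poset $\{(a,b): a\le b\}$. The key steps are as follows.

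\begin{enumerate}
\item Show that $C(\Delta^n)$ is obtained from $C(\Lambda^n_i)$ by filling the missing vertices with iterated pullbacks. Concretely, the value at $(a,b)$ is forced to be the limit of the restricted diagram over $\{(a',b') : a\le a'\le b'\le b\}$ minus the missing part, which is a limit over a cospan-shaped category built from already defined vertices.
\item Show that this extension exists. The needed pullbacks exist because every vertical edge lies in $\E$ and $\E$ is stable under pullback. The relevant cospans always contain a vertical leg, namely one whose second coordinate is fixed.
\item Show that this extension is essentially unique. This uses HTT 4.3.2.15 on right Kan extensions: the space of extensions that are right Kan extensions is contractible, and exact squares going to pullbacks is precisely the Kan extension condition.
\item Check that the new vertical edges lie in $\E$, by pullback stability and closure under composition, and that all exact squares are sent to pullbacks, by the pasting lemma for pullbacks.
\end{enumerate}

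For the outer faces, the standard argument is via Lurie's Segal-type criterion, or via Liu--Zheng's identification of $\op{Corr}$ with a restricted $\delta^*_{2}$ of the bisimplicial nerve. In the latter formulation, inner horn filling reduces to the partial adjoint criterion used in \cite{liu2017enhanced}.

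For the second assertion, write $\iota\colon \Ca^{\op{op}} \to \op{Corr}(\Ca)_{\E,\op{all}}$. It sends $f\colon X\to Y$ to the span $Y \xleftarrow{f} X \xrightarrow{\op{id}} X$. Finite products in $\Ca^{\op{op}}$ are finite coproducts $X \sqcup Y$ in $\Ca$, so we must show that $X\sqcup Y$, with the spans $X \leftarrow X = X$ and $Y \leftarrow Y = Y$, is a product in $\op{Corr}$. For a test object $Z$, we identify $\op{Map}_{\op{Corr}}(Z, X\sqcup Y)$ with the space of spans $Z \leftarrow W \to X\sqcup Y$ with right leg in $\E$. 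We then use that coproducts in $\Ca$ are disjoint and universal, which we assume as part of the setup (as in Mann's setting). This gives $W \simeq W_X \sqcup W_Y$ with $W_X = W\times_{X\sqcup Y} X$, and the map $W\to X\sqcup Y$ is in $\E$ if and only if both components are. The assignment $W\mapsto (W_X,W_Y)$ gives the equivalence
$$\op{Map}(Z,X\sqcup Y)\simeq \op{Map}(Z,X)\times\op{Map}(Z,Y),$$
and composing with the projections recovers this map by the definition of composition through pullbacks. For the empty product, the initial object $\emptyset$ of $\Ca$ is terminal in $\op{Corr}$ by strictness of $\emptyset$.

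I expect the main obstacle to be step 3, together with the combinatorics of $C(\Lambda^n_i)$. One must show that the inclusion $C(\Lambda^n_i)\subset C(\Delta^n)$ is inner anodyne relative to the pullback condition, that is, that exact squares propagate correctly. I would first attempt an induction on $n$ that fills the corner $(0,n)$ last as a single pullback of $(0,n-1)\to(1,n-1)\leftarrow(1,n)$.
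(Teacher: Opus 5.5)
The paper gives no proof of this proposition; it only cites Section 4 of the construction paper. Your argument therefore has to stand on its own, and it has a gap in each part.

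\textbf{Item 1.} Your description of $C(\Lambda^n_i)$ is correct only for $n=2$. A vertex $(a,b)$ lies in $C(d_j\Delta^n)$ whenever $j\notin\{a,b,i\}$. So for $n\geq 3$ every vertex of $C(\Delta^n)$, including the corner $(0,n)$, already lies in $C(\Lambda^n_i)$. What is missing are exactly the chains whose coordinates exhaust $[n]\setminus\{i\}$. All of these pass through $(0,n)$, but $(0,n)$ itself is present. For example, for $n=3$, $i=1$ the edges $(0,3)\to(0,2)$ and $(0,3)\to(2,3)$ are missing although both endpoints are there.

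Consequently, steps 1--2 and your induction (``fill $(0,n)$ last as a pullback'') address a problem that does not arise. The value at $(0,n)$ is prescribed, and what must be built are coherent higher simplices through it, compatible with all given faces. HTT 4.3.2.15 also cannot be applied to $C(\Lambda^n_i)\subset C(\Delta^n)$, since this is not the inclusion of a full subcategory (for $n\geq 3$ it is bijective on vertices).

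The standard repair is to work relative to the spine $I^n\subset\Lambda^n_i$. Write $\op{Fun}^{\op{cart}}$ for functors satisfying the two defining conditions of $\op{Corr}$ on every simplex, and $\op{Fun}^{\E}$ for functors sending vertical edges to $\E$.
\begin{enumerate}
\item $C(I^n)$ is the full subposet of $C(\Delta^n)$ on the vertices $(a,a)$ and $(a,a+1)$.
\item A functor $C(\Delta^n)\to\Ca$ with vertical edges in $\E$ sends exact squares to pullbacks iff it is right Kan extended from $C(I^n)$.
\item Such extensions exist, since the relevant limits are iterated pullbacks of $\E$-maps. Hence HTT 4.3.2.15 makes $\op{Fun}^{\op{cart}}(C(\Delta^n),\Ca)\to\op{Fun}^{\E}(C(I^n),\Ca)$ a trivial fibration.
\item $I^n\subset\Lambda^n_i$ is built from inner horns of dimension $<n$, and $C$ preserves colimits and monomorphisms. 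So induction on $n$ shows that $\op{Fun}^{\op{cart}}(C(\Lambda^n_i),\Ca)\to\op{Fun}^{\E}(C(I^n),\Ca)$ is also a trivial fibration.
\item By two-out-of-three, and since restriction along a monomorphism is a categorical fibration, $\op{Fun}^{\op{cart}}(C(\Delta^n),\Ca)\to\op{Fun}^{\op{cart}}(C(\Lambda^n_i),\Ca)$ is a trivial fibration. This gives the filler.
\end{enumerate}
Your remark about ``outer faces'' and partial adjoints should be dropped. Only inner horns matter, and the partial adjoint theorem constructs functors out of such simplicial sets; it does not fill horns in them.

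\textbf{Item 2.} You are right that finite coproducts alone do not suffice. For $\Ca=\op{Ab}$ and $\E=\op{all}$, the spans $0\leftarrow\mathbb{Z}\xrightarrow{\mathrm{diag}}\mathbb{Z}\oplus\mathbb{Z}$ and $0\leftarrow 0\to\mathbb{Z}\oplus\mathbb{Z}$ are inequivalent but have the same composites with both projections. So the image of the product cone is not a product.

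Disjoint universal coproducts give you $W\simeq W_X\sqcup W_Y$. Pullback stability then gives the ``only if'' half of your claim that $W\to X\sqcup Y$ is in $\E$ iff both components are. The ``if'' half, which you need for essential surjectivity of $\op{Map}(Z,X\sqcup Y)\to\op{Map}(Z,X)\times\op{Map}(Z,Y)$, does not follow from weak stability. In $\op{Set}$, the class of bijections together with all maps $\emptyset\to B$ is weakly stable. Yet for $X,Y\neq\emptyset$ the inclusion $X\to X\sqcup Y$ is not in it. So for $Z=X$ the pair $(X\xleftarrow{\op{id}}X\xrightarrow{\op{id}}X,\ X\leftarrow\emptyset\to Y)$ has no preimage.

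You must additionally assume that $\E$ is closed under finite coproducts, as holds for instance for separated finite type maps of schemes. With that assumption, and the standard identification of mapping spaces in $\op{Corr}$ with groupoids of spans, your argument goes through, including your treatment of the empty product.
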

To define abstract $6$-functor formalisms, one needs to encode a symmetric monoidal structure on $\op{Corr}(\Ca)_{\E,\op{all}}$.
\begin{notation}
    let $(\Ca,\E)$ be a  geometric setup. We define a geometric setup on the $\infty$-category $(\Ca^{\op{op}})^{\coprod,\op{op}}$. We write an edge $f$ of $\Ccc$ in the form $\{Y_j\}_{1\le j \le n} \to \{X_i\}_{1 \le i \le m}$ lying over $\alpha : [m] \to [n]$. We define two sets of $\E^{+},\E^{-}$ as follows:
    \begin{itemize}
        \item $\E^+$ consists of $f$ such that the induced age $Y_{\alpha(i)} \to X_i$ belongs to $\E$ for every $i \in \alpha^{-1}(\langle n \rangle ^0)$,
        \item $\E^-$ is subset of $\E^+$ where $\alpha$ is degenerate.
    \end{itemize}
    We shall denote :
    \begin{equation*}
        \op{Corr}(\Ca)^{\otimes}_{\E,\op{all}} := \op{Corr}(\Ccc)_{\E^{-},\op{all}^+}
    \end{equation*}
\end{notation}
The following proposition ensures that the above notation does encode a symmetric monoidal structure.
\begin{proposition}
    Let $(\Ca,\E)$ be an $\infty$-category such that $\Ca$ admits finite products. Then 
    \begin{equation*}
        p : \CrrCopEal \to N(\op{Fin}_*)
    \end{equation*}
    is a coCartesian symmetric monoidal $\infty$-category whose underlying $\infty$-category is $\op{Corr}(\Ca)_{\E,\op{all}}$.
\end{proposition}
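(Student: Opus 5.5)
The plan is to reduce the statement to the general machinery for producing symmetric monoidal structures on spans out of a category with a Cartesian-type monoidal structure, namely Lurie's construction of $\infty$-operads from coCartesian fibrations together with the description of $\op{Corr}$ via the functor $B$. The statement as written assumes $\Ca$ has finite products; note, though, that $(\Ca^{\op{op}})^{\coprod,\op{op}}$ is built from $\Ca^{\op{op}}$ with its coCartesian structure. So the monoidal structure on $\op{Corr}(\Ca)_{\E,\op{all}}$ should be the one induced by products in $\Ca$, which are coproducts in $\Ca^{\op{op}}$. First I will check that $(\Ccc, \E^-, \op{all}^+)$ is a geometric setup in the sense of \cite[Definition A.5.1]{mann2022padic}. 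That is, $\E^-$ contains isomorphisms and is stable under composition, and pullbacks of $\E^-$-edges along arbitrary edges exist and stay in $\E^-$. A pullback in $(\Ca^{\op{op}})^{\coprod,\op{op}}$ of a fibrewise edge $\{Y_i\}\to\{X_i\}$ along an edge lying over $\alpha$ is computed componentwise by pullbacks in $\Ca$, indexed over the active part of $\alpha$. Finite products in $\Ca$ are used here to form the components $\prod_{\alpha(i)=j} X_i$. Stability of $\E$ under base change then gives that the pullback lies in $\E^-$.

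Second, I will identify the structure map. The projection $(\Ca^{\op{op}})^{\coprod}\to N(\op{Fin}_*)$ induces a map $\op{Corr}(\Ccc)_{\E^-,\op{all}^+}\to \op{Corr}(N(\op{Fin}_*)^{\op{op}})_{\op{deg},\op{all}}$. The target is identified with $N(\op{Fin}_*)$, since vertical edges are forced to be degenerate, so a correspondence reduces to a single backward edge. To show $p$ is a coCartesian fibration, I will exhibit coCartesian lifts explicitly. Given an object $\{X_i\}_{i\in\langle m\rangle^0}$ and $\alpha:\langle m\rangle\to\langle n\rangle$, the lift is the correspondence whose horizontal leg is the Cartesian edge $\{\prod_{\alpha(i)=j}X_i\}_j \to \{X_i\}$ in $\Ccc$ and whose vertical leg is the identity. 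Checking that this edge is $p$-coCartesian is the main obstacle, because $\op{Corr}$ is only a simplicial subset of $B(\Ccc)$. I would handle it by the standard strategy of \cite[Section 4]{chowdhury2025sixfunctorformalismsiiiconstruction}. The mapping spaces in $\op{Corr}$ are groupoids of spans, and the coCartesian condition amounts to an equivalence of span spaces. That equivalence follows from the universal property of the Cartesian edge in $(\Ca^{\op{op}})^{\coprod}$, which is a product in $\Ca$, together with the fact that composing with an identity vertical leg doesn't change the $\E$-condition. Alternatively, one can argue via inner horn fillers over $N(\op{Fin}_*)$, working with $C(\Lambda^n_0)\subset C(\Delta^n)$.

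Finally, I will verify the Segal condition. The fibre over $\langle n\rangle$ is $\op{Corr}$ of the fibre $(\Ca^{\op{op}})^{\times n,\op{op}}\simeq \Ca^n$ with the class $\E^n$. Since $\op{Corr}$ commutes with finite products of geometric setups, this fibre is $\op{Corr}(\Ca)_{\E,\op{all}}^{n}$. The inert maps $\rho^i$ induce the projections, because the coCartesian lift over an inert map just forgets components. This shows $p$ is a symmetric monoidal $\infty$-category with underlying category the fibre over $\langle 1\rangle$, which is $\op{Corr}(\Ca)_{\E,\op{all}}$. The tensor product is computed by the lift over the active map $\langle 2\rangle\to\langle 1\rangle$, namely $X\otimes Y = X\times Y$.
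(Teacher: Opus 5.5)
The paper gives no proof of this proposition. It is quoted as a known fact from \cite{liu2017enhanced}, \cite{mann2022padic} and Section~4 of \cite{chowdhury2025sixfunctorformalismsiiiconstruction}; later the paper only uses the explicit description of coCartesian edges in \cite[Lemma 4.1.11]{chowdhury2025sixfunctorformalismsiiiconstruction}. Your argument is the standard proof along exactly those lines, and it is correct.

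The lift $\{X_i\}\leftarrow\{\prod_{\alpha(i)=j}X_i\}_j\xrightarrow{\op{id}}\{\prod_{\alpha(i)=j}X_i\}_j$ is coCartesian for the following reason. After splitting mapping spaces of $\op{Corr}$ over $\op{Hom}_{\op{Fin}_*}$, the map $\op{Map}(Y,Z)_\beta\to\op{Map}(X,Z)_{\beta\alpha}$ of span spaces covers the identity on pairs $(V,\,V\to Z\in\E^-)$. On fibres it is the equivalence $\op{Map}_{\Ccc}(V,Y)_\beta\simeq\op{Map}_{\Ccc}(V,X)_{\beta\alpha}$, which expresses that $Y\to X$ is Cartesian over $N(\op{Fin}_*)^{\op{op}}$. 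The Segal condition follows from $\op{Corr}(\Ca^n)_{\E^n,\op{all}}\cong\op{Corr}(\Ca)^n_{\E,\op{all}}$, since $B$ is a right adjoint. So your write-up supplies the argument the paper leaves to the references.

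Three small points need tightening.

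\begin{enumerate}
\item The mapping-space criterion for coCartesian edges requires $p$ to be an inner fibration, and you do not say so. This is automatic: any functor from an $\infty$-category to the nerve of a $1$-category is an inner fibration, because simplices of $N(\op{Fin}_*)$ are determined by their spines, which lie in every inner horn.
\item Read literally, ``$\alpha$ degenerate'' means $\E^-$ contains the degenerate edges but not all equivalences of $\Ccc$. For example, the swap $\{X,Y\}\simeq\{Y,X\}$ lies over a transposition of $\langle 2\rangle$. So your claim that $\E^-$ contains isomorphisms should be weakened to this. The weaker statement is harmless for the construction of $\op{Corr}$, and it is exactly what makes $p$ land in $N(\op{Fin}_*)$ on the nose, via $\op{Corr}(N(\op{Fin}_*)^{\op{op}})_{\op{deg},\op{all}}\cong N(\op{Fin}_*)$.
\item The componentwise pullback $Z_j\times_{\prod X_i}\prod Y_i$ uses that $\E$ is closed under finite products. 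This follows from base change plus composition. Alternatively, build it as an iterated pullback $Z_j\times_{X_{i_1}}Y_{i_1}\times_{X_{i_2}}\cdots$, which does not need products at all.
\end{enumerate}
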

\subsection{Presentable six-functor formalisms.}
       \begin{definition}
    Let $(\Ca,\E)$ be a geometric setup where $\Ca$ admits finite products. Then a \textit{presentable 6-functor formalism} is a morphism of $\infty$-operads :
    \begin{equation*}
        \D : \CrrCopEal \to \op{Pr}^{L\otimes}_{\op{cl}}.
    \end{equation*}
    Given a presentable 6-functor formalism, we introduce the following notations :
    \begin{enumerate}
        \item Restricting to the sub-operad $\Ccc$, we get a functor :
        \begin{equation*}
            \D^{*\otimes} : \Ca^{\op{op},\coprod} \to \op{Pr}^{L\otimes}
        \end{equation*}
        This is equivalent to the functor 
        \begin{equation*}
            \D^{*\otimes} : \Ca^{\op{op}} \to \op{CAlg}(\op{Pr}^L).
        \end{equation*} Taking right adjoints, we also have the pushforward functor 
        \begin{equation*}
            \D_* : \Ca \to \op{Pr}^R.
        \end{equation*}
        \item As $\D(X):=\D_{(\Ca,\E)}(X)$ is symmetric monoidal for every $X \in \Ca$, we get a tensor product structure :
        \begin{equation*}
            - \otimes - : \D(X) \times \D(X) \to \D(X).
        \end{equation*} As $\D(X)$ is closed, we also have the Internal Hom functor 
        \begin{equation*}
            \op{Hom}(-,-) : \D(X) \times \D(X) \to \D(X).
        \end{equation*}
        \item Using the inclusion $\Ca_{\E} \to \CrrCpEal$, we get the following functor :
        \begin{equation*}
           \D_!: \Ca_{\E} \to \op{Pr}^L.
        \end{equation*}
        Taking right adjoints gives us the functor:
        \begin{equation}
            \D^! : \Ca_{\E} \to \op{Pr}^R.
        \end{equation}
    \end{enumerate}
\end{definition}

\begin{remark}
    Here $\op{Pr}^L_{\op{cl}}$ is the $(\infty,1)$-category of presentable $(\infty,1)$-categories which are closed and morphism are left adjoints. This admits a monoidal structure via the Lurie Tensor product. 
\end{remark}
Let us briefly describe how the six functor formalism encodes properties such as the projection formula and base change.
\begin{enumerate}
    \item \textbf{Projection formula:} Let $f: X \to Y$ be a morphism in $E$, we consider the diagram :
    \begin{equation*}
        \begin{tikzcd}
            (X, Y) \arrow[r,"\sigma"] \arrow[d,"\tau"] & (Y,Y) \arrow[d,"\tau'"] \\
            X \arrow[r,"f"] & Y
        \end{tikzcd}
    \end{equation*}
    where :
    \begin{itemize}
        \item $\sigma':$\begin{equation*}
             \begin{tikzcd}
                (X,Y) & \arrow[l," \op{id}"] (X,Y) \arrow[d,"f_1"] \\
                {} & (Y,Y).
            \end{tikzcd}
        \end{equation*}
        where $f_1 = (f,\op{id})$.
        \item $\tau:$ \begin{equation*}
             \begin{tikzcd}
                (X,Y) & \arrow[l,"f_2"]  X \arrow[d,"\op{id}"] \\
               {} & X.
                \end{tikzcd}
        \end{equation*}
        where $f_2=(\op{id},f)$
        \item  $\tau':$\begin{equation*}
             \begin{tikzcd}
                (Y,Y) & Y \arrow[d,"\op{id}"] \arrow[l,"f_3"] \\
                {} & Y.
            \end{tikzcd}
        \end{equation*}
        where $f_3 = (\op{id},\op{id})$.
    \end{itemize}

This gives a morphism $\Delta^1 \times \Delta^1 \to \CrrCopEal$.  Applying $\D_{(\Ca,\E)}$, we get the following commutative square in $\op{Cat}_{\infty}$:
\begin{equation*}
    \begin{tikzcd}
        \D(X) \times \D(Y) \arrow[r,"f_! \times \op{id}"] \arrow[d, "\op{id} \otimes f^* "] & \D(Y) \times \D(Y) \arrow[d,"-\otimes-"]\\
        \D(X) \arrow[r,"f_!"] & \D(Y).
    \end{tikzcd}
\end{equation*}
which is the projection formula :
\begin{equation*}
    f_!((-) \otimes f^*(-)) \cong f_!(-) \otimes (-).
\end{equation*}
\item \textbf{Base change :} 
Let 
\begin{equation*}
    \begin{tikzcd}
        X' \arrow[d,"p'"] \arrow[r,"q'"] & X \arrow[d,"p"] \\
        Y' \arrow[r,"q"] & Y
    \end{tikzcd}
\end{equation*}
be a cartesian square with $p',p \in \E$. We have a commutative square in $\CrrCopEal$ 
\begin{equation*}
    \begin{tikzcd}
        X \arrow[r] \arrow[d] & X' \arrow[d] \\
        Y\arrow[r,] & Y'
    \end{tikzcd}
\end{equation*}
which is comprised of two $2$-simplices
\begin{itemize}
    \item $\sigma_1:$
    \begin{equation*}
        \begin{tikzcd}
            X & \arrow[d,"p"] \arrow[l,"\op{id}"] X & X' \arrow[l,"q'"] \arrow[d,"p'"] \\
            {} & Y & Y'\arrow[l,"q"] \arrow[d,"\op{id}"] \\
            {} & {} & Y'.
         \end{tikzcd}
    \end{equation*}
    \item $\sigma_2':$
    \begin{equation*}
        \begin{tikzcd}
            X & X' \arrow[l,"q'"] \arrow[d,"\op{id}"] & X' \arrow[d,"\op{id}"] \arrow[l,"\op{id}"] \\
            {} & X' & X'\arrow[l,"\op{id}"]\arrow[d,"p'"]\\
            {} & {} & Y'.
        \end{tikzcd}
    \end{equation*}
    Applying $\D_{(\Ca,\E)}$ to the square, we get the commutative square :
    \begin{equation*}
        \begin{tikzcd}
            \D(X) \arrow[r,"q^{'*}"] \arrow[d,"p_!"] & \D(X') \arrow[d,"p'_!"]\\
            \D(Y) \arrow[r,"q^*"] & \D(Y')
        \end{tikzcd}
    \end{equation*}
    in $\op{Cat}_{\infty}$. Spelling this out, we get the base change equivalence $Ex^*_!$:
    \begin{equation}\label{App.:_Base_Change_Ex*_!}
    	q^*p_!\stackrel{Ex^*_!}{\simeq} p_!'q^{*'}
    \end{equation}
    
\end{itemize}
\end{enumerate}

Let us now recall the general setup of how six-functor formalisms. We recall the notion of Nagata set-up from \cite{dauser2025uniquenesssixfunctorformalisms}.
 \begin{definition}\cite[Definition 2.1]{dauser2025uniquenesssixfunctorformalisms}
     A \textit{Nagata setup} $(\Ca,\E,\I,\P)$ is a geometric setup $(\Ca,\E)$ together with two subsets $\I,\P \subset \E$ such that :
     \begin{enumerate}
         \item $(\Ca,\I)$ and $(\Ca,\P)$ are geometric setups,
         \item Every morphism $f \in E$ admits a decomposition $f = \bar{f} \circ j$ where $j \in \I$ and $\bar{f} \in \P$.
         \item  Given $f : X \to Y$ in $\Ca$ and $g: Y \to Z$ in $\I$($\P$) then $f \in \I$($\P$) iff $g \circ f \in \I$($\P$).
         \item Every morphism $f \in \I \cap \P$ is $k$-truncated for some $k \ge -2$. 
         \end{enumerate}
 \end{definition}
 \begin{example}
     Let $\Ca=\op{Sch}$ be the category of Noetherian schemes of finite Krull dimension. Let $\E$ be morphisms that are separated of finite type. Choosing $\I$ as open immersions and $\P$ as proper morphisms, by Nagata compactification, we see that the tuple $(\Ca,\E,\I,\P)$ is a Nagata setup. 
 \end{example}
Now we proceed to state the main theorem, which enables us to construct $3$/$6$-functor formalisms. This theorem uses all the results from the previous articles.

\begin{theorem}\cite[Theorem 5.2.5]{chowdhury2025sixfunctorformalismsiiiconstruction}\label{theorem: mainconstructiontheorem}
Let $(\Ca,\E,\I,\P)$ be a Nagata setup. Let
\begin{equation}
   \D^{*\otimes}: \Ca^{op} \to \op{CAlg}(\op{Pr}^L_{\op{st},\op{cl}})
\end{equation}
be a functor satisfying the following conditions : 
		\begin{enumerate}
			\item  For any  morphism in $\I$, $f^*$ has a left adjoint $f_{\#}$ such that: 
			\begin{enumerate}
				\item  ( $\I$-projection formula) For  any $E \in \D(Y)$ and $B \in \D(X)$, the natural map formed by adjunction
				\begin{equation}\label{smothpro}
					f_{\#}(E \otimes f^*(B)) \to f_{\#}E \otimes B \end{equation} is an equivalence.
				\item  ($\I$-base change) For a cartesian square
				\begin{equation}\label{8.1}
					\begin{tikzcd}
						X' \arrow[r,"f'"] \arrow[d,"g'"] & Y' \arrow[d,"g"] \\
						X \arrow[r,"f"] & Y
					\end{tikzcd}
				\end{equation}
				with $f \in \I$, the commutative square  
				
				\begin{equation}\label{8.2}
					\begin{tikzcd}
						\D(X') & \D(Y') \arrow[l,"\lbrace f '\rbrace^*"]  \\
						\D(X ) \arrow[u,"\lbrace g'\rbrace^*"] & \D(Y) \arrow[l,"f^*"] \arrow[u,"g^*"]
					\end{tikzcd}
				\end{equation}
				is horizontally left-adjointable, i.e., there exists a commutative square
				\begin{equation}\label{8.3}
					\begin{tikzcd}
						\D(X') \arrow[r,"\lbrace f'\rbrace_{\#}"]& \D(Y') \\
						\D(X ) \arrow[u,"\lbrace g'\rbrace^*"] \arrow[r,"f_{\#}"] & \D(Y) \arrow[u,"g^*"]
					\end{tikzcd}
				\end{equation}
			\end{enumerate}
			\item For $f: Y \to X$ a  morphism  in $\P$ , $f^*$ admits a right adjoint functor $f_*$ with the following properties:
			\begin{enumerate}
				\item ($\P$-projection formula) For $E \in \D(Y)$ and $B \in \D(X)$, the natural map 
				\begin{equation}\label{proppro}
					f_*(E) \otimes B \to f_*(E \otimes f^*(B)) 
				\end{equation}
				is an equivalence.
				\item ($\P$ base change) For the cartesian square in \cref{8.1} with $f \in \P$, the induced pullback square \cref{8.2} is horizontally right adjointable. In other words, the square commutes
				\begin{equation}\label{8.4}
					\begin{tikzcd}
						\D(X') \arrow[r," f'_*"]& \D(Y') \\
						\D(X ) \arrow[u,"\lbrace g'\rbrace^*"] \arrow[r,"f_*"] & \D(Y) \arrow[u,"g^*"]
					\end{tikzcd}
				\end{equation}
			\end{enumerate} 
			\item (Support property) For a cartesian diagram in \cref{8.1} where $f \in \I$ and $g \in \P$,the commutative diagram in \cref{8.3} written as  square
			\begin{equation}\label{8.5}
				\begin{tikzcd}
					\D(X) \arrow[r,"f_{\#}"] \arrow[d,"\lbrace g' \rbrace ^*"] & \D(Y) \arrow[d,"g^*"] \\
					\D(X') \arrow[r,"\lbrace f' \rbrace_{\#}"] & \D(Y')
				\end{tikzcd}
			\end{equation} 
			is horizontally right adjointable, i.e., the square 
			
			\begin{equation}\label{8.6}
				\begin{tikzcd}
					\D(X') \arrow[r,"\lbrace f' \rbrace_{\#}"] \arrow[d,"g'_*"] & \D(Y') \arrow[d,"g_*"] \\
					\D(X) \arrow[r,"f_{\#}"] & \D(Y) 
				\end{tikzcd}
			\end{equation}
			commutes.
			
		\end{enumerate}
    Then $\D^{*\otimes}$ can be upgraded to a $3$-functor formalism : 
    \begin{equation}
        \D_{(\Ca,\E)} : \op{Corr}(\Ca)_{\E,\op{all}}^{\otimes} \to \op{Cat}_{\infty}^{\otimes}   \end{equation}
        such that for all $f \in \P$, $f_!=f_*$ and $f\in \I$, $f_!=f_{\#}$.
Moreover, if the $3$-functor formalism satisfies the additional assumptions :
\begin{enumerate}
    \item For every $X \in \Ca$, $\D(X)$ is closed.
    \item For every $f: X \to Y$ in $\Ca$, $f^*$ admits a right adjoint $f_*$.
    \item For every $f: X \to Y$ in $\P$, $f_*$ admits a right adjoint $f^!$.
    \end{enumerate}
Then the $3$-functor formalism is a $6$-functor formalism.         
	\end{theorem}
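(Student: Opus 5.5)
This theorem is cited from \cite[Theorem 5.2.5]{chowdhury2025sixfunctorformalismsiiiconstruction}, so the plan follows the Liu--Zheng strategy for building such a formalism in stages. First, use the $\I$-adjointability hypotheses to extend $\D^{*\otimes}$ to correspondences whose vertical maps lie in $\I$. Second, use the $\P$-adjointability hypotheses to extend further to correspondences whose vertical maps lie in $\P$. Third, glue the two extensions into a single functor on $\op{Corr}(\Ca)^{\otimes}_{\E,\op{all}}$, using the Nagata decompositions $f=\bar f\circ j$.

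\textbf{Step 1: the two partial formalisms.} For $\I$ we use the left adjoints $f_{\#}$. The $\I$-base change hypothesis says the squares \cref{8.2} are left adjointable. Lurie's criterion for adjointable functors then turns $\D^*$ on $\Ca^{\op{op}}$ into a functor on the bisimplicial set of squares whose horizontal edges are arbitrary and whose vertical edges lie in $\I$. That functor sends vertical edges to $f_{\#}$.

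For $\P$ the same argument applies to right adjoints $f_*$, via $\P$-base change. To keep the monoidal structure, I would work throughout with the operadic version $\Ccc$. There the projection formulas \cref{smothpro} and \cref{proppro} are exactly what makes the exchange squares against $\otimes$ adjointable. This yields
\[
\D_{\I}:\op{Corr}(\Ca)^{\otimes}_{\I,\op{all}}\to\op{Cat}^{\otimes}_{\infty}
\quad\text{and}\quad
\D_{\P}:\op{Corr}(\Ca)^{\otimes}_{\P,\op{all}}\to\op{Cat}^{\otimes}_{\infty}.
\]
Both agree with $\D^{*\otimes}$ on $\Ca^{\op{op}}$.

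\textbf{Step 2: gluing.} The support property \cref{8.6} says that for a square with $f\in\I$ and $g\in\P$, the functors $f_{\#}$ and $g_*$ commute compatibly. This lets us assemble a functor on the multisimplicial set of grids whose vertical edges in one direction lie in $\I$ and in the other direction lie in $\P$, with all squares cartesian. On this multisimplicial set, $\I$-edges go to $f_{\#}$ and $\P$-edges go to $g_*$. It is again obtained by iterated adjointability, now in the multisimplicial framework of \cite[Section 2]{chowdhury2025sixfunctorformalismsii}.

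\textbf{Step 3: compactification (main obstacle).} We must show that the natural map from this $\I\times\P$ multisimplicial set to $\op{Corr}(\Ca)^{\otimes}_{\E,\op{all}}$, given by composing a compactification $j$ followed by $\bar f$, is a categorical equivalence after taking diagonals. This is the $\infty$-categorical compactification theorem \cite[Theorem 1.5.1]{liu2017enhanced}, reproved in \cite[Thm B]{chowdhury2025sixfunctorformalismsii}.

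The key inputs are the Nagata axioms. Axiom (2) gives existence of compactifications. Axiom (3), the two-out-of-three property, makes the relevant categories of compactifications of a fixed $f\in\E$ filtered, or at least weakly contractible. Axiom (4), $k$-truncatedness of maps in $\I\cap\P$, is what makes the required higher-simplex lifting argument work in the $\infty$-categorical rather than $1$-categorical setting; I would run it by induction on truncation level. Granting this equivalence, $\D_{(\Ca,\E)}$ is obtained by choosing an inverse and precomposing. The identifications $f_!=f_*$ for $f\in\P$ and $f_!=f_{\#}$ for $f\in\I$ follow by restricting to trivial compactifications.

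\textbf{Step 4: six functors.} If each $\D(X)$ is closed, the internal Hom exists. If $f^*$ and $f_*$ have right adjoints as stated, then $f_!=\bar f_*\,j_{\#}$. Here $j_{\#}$ is a left adjoint with right adjoint $j^*$, and $\bar f_*$ has right adjoint $\bar f^{!}$ by hypothesis, so $f_!$ admits a right adjoint $f^!=j^*\bar f^{!}$. Hence the $3$-functor formalism is a $6$-functor formalism.
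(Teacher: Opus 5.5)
Your proposal is correct and follows essentially the same route as the cited construction (Liu--Zheng, reproved in the Chowdhury series that the paper invokes). That route applies partial adjoints on the multisimplicial nerve with directions $\I$, $\P$, $\op{all}$, using $\I$/$\P$ base change, the projection formulas and the support property for adjointability; then the $\infty$-categorical compactification theorem, where the Nagata axioms including truncatedness of $\I\cap\P$ enter; and finally passes to right adjoints for the six-functor statement. The one unnecessary detour is your Step 1: the separate formalisms $\D_{\I}$ and $\D_{\P}$ on correspondence categories are never needed, since the partial-adjoint theorem is applied successively to a single functor on the tri-simplicial nerve that initially sends every direction to $(-)^*$.
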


\begin{remark}
    In \cite{chowdhury2025sixfunctorformalismsiiiconstruction}, we reprove this construction of Liu-Zheng which uses the technical lemma of constructing functors (\cite[Theorem 1.0.4]{chowdhury2023sixfunctorformalismsi}), the infinity-categorical compactification (\cite[Theorem 1.0.3]{chowdhury2025sixfunctorformalismsii}) followed by Partial Adjoints (\cite[Theorem 3.2.1]{chowdhury2025sixfunctorformalismsiiiconstruction})
\end{remark}

\subsection{The convolution product.}

The $\infty$-category of correspondences admits commutative and associative algebra objects. In this subsection, we discuss some examples and, in particular, the convolution product that usually arises from a six-functor formalism. We restate the results as stated in \cite[Remark 8.12]{zhu2025tamecategoricallocallanglands}.

\begin{lemma}
    Let $(\Ca,\E)$ be a geometric setup and let $X \in \op{Alg}_{E_k}(\Ca)$. Then $X \in \op{Alg}_{E_k}(\CrrCpEal)$. In particular, commuative and associative algebra objects of $\Ca$ are also commutative and associative algebra objects of $\op{Corr}(\Ca)_{\E,\op{all}}$.
\end{lemma}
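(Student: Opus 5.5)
The plan is to obtain the algebra structure on $X$ in $\CrrCpEal$ by transport along a symmetric monoidal functor. I would not use the multiplication of $X$ in $\Ca$ covariantly: that would force it to lie in $\E$, and the statement does not assume this. The relevant functor is the canonical inclusion $\Ca^{\op{op}} \to \CrrCpEal$. On the level of operads it is the inclusion of the sub-operad $\Ccc \subset \CrrCopEal$ from the Notation defining $\CrrCopEal$, i.e. the restriction already used to define $\D^{*\otimes}$. Since this is a map of $\infty$-operads over $N(\op{Fin}_*)$ and both sides are coCartesian fibrations, the first step is to check that it preserves coCartesian edges. The proposition in the subsection on the $(\infty,1)$-category of correspondences says that $\Ca^{\op{op}}\to \CrrCpEal$ preserves finite products, which gives this. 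So the functor is symmetric monoidal, where $\Ca^{\op{op}}$ carries the monoidal structure induced by the cartesian product of $\Ca$.

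The second step is to identify that monoidal structure. A product in $\Ca$ is a coproduct in $\Ca^{\op{op}}$, so $(\Ca^{\op{op}})^{\coprod}$ is the cocartesian symmetric monoidal structure on $\Ca^{\op{op}}$. By \cite[Prop.~2.4.3.9 and Cor.~2.4.3.10]{HA}, for a cocartesian symmetric monoidal $\infty$-category the forgetful functor $\op{Alg}_{\mathcal{O}}(\Ca^{\op{op}})\to \Ca^{\op{op}}$ is an equivalence for every unital $\infty$-operad $\mathcal{O}$, in particular for $E_k$ with $1\le k\le\infty$. Hence every object of $\Ca^{\op{op}}$ carries an essentially unique $E_k$-algebra structure, and even an $E_\infty$-structure. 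Dually, this is the diagonal coalgebra structure $X\to X\times X$ in $\Ca$.

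The third step is to push forward along the symmetric monoidal functor. Composing with $\Ca^{\op{op}}\to \CrrCpEal$ gives a functor $\op{Alg}_{E_k}((\Ca^{\op{op}})^{\coprod})\to \op{Alg}_{E_k}(\CrrCpEal)$. Given $X\in \op{Alg}_{E_k}(\Ca)$, forget its algebra structure, take the canonical algebra structure on the underlying object of $\Ca^{\op{op}}$, and transport it. Concretely, the resulting multiplication is the correspondence
\[X\times X \xleftarrow{\ \Delta\ } X \xrightarrow{\ \op{id}\ } X,\]
which is legitimate because its vertical edge is the identity, and identities lie in $\E$. The unit is $\ast \leftarrow X \to X$. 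The commutative and associative cases are $k=\infty$ and $k=1$.

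I expect the main obstacle to be the first step. One has to verify honestly that the sub-operad inclusion sends coCartesian edges of $\Ccc$ over active maps of $\op{Fin}_*$ to coCartesian edges of $\CrrCopEal=\op{Corr}(\Ccc)_{\E^-,\op{all}^+}$. To do this I would unwind the description of coCartesian edges in the correspondence operad: they are the edges whose horizontal part is coCartesian in $\Ccc$ and whose vertical part is degenerate. Edges coming from $\Ccc$ have degenerate vertical part by construction.

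Finally, I would remark that this structure ignores the original multiplication on $X$. The convolution structure one actually wants, with multiplication $X\times X\xleftarrow{\op{id}} X\times X\xrightarrow{m} X$, needs $m\in\E$. That stronger refinement is not what the statement as worded asserts.
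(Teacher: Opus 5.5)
Your proposal is correct and follows the paper's proof, which is the single line that $\Ca^{\op{op}}\to\CrrCpEal$ is symmetric monoidal. Your appeal to the fact that every object of the cocartesian monoidal $\infty$-category $(\Ca^{\op{op}})^{\coprod}$ carries an essentially unique $E_k$-algebra structure makes explicit what that line leaves implicit: the transported structure is the diagonal one $X\times X\xleftarrow{\Delta}X\xrightarrow{\op{id}}X$, not the original multiplication of $X$.

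One small correction: the proposition that $\Ca^{\op{op}}\to\CrrCpEal$ preserves finite products concerns products in $\Ca^{\op{op}}$, i.e. coproducts in $\Ca$, not the tensor product $\times$ on $\CrrCpEal$, so it does not give symmetric monoidality. Your coCartesian-edge check (horizontal part coCartesian in $(\Ca^{\op{op}})^{\coprod}$, vertical part degenerate) is the right justification.
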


\begin{proof}

This follows from the fact that the morphism $\Ca^{\op{op}} \to \CrrCpEal$ is symmetric monoidal.
\end{proof}

 The next lemma is about understanding the convolution product in an abstract six-functor formalism. 

\begin{theorem}\label{abstractconvolutionproduct}
    Let $\E_1.\E_2$ be two sets of edges in $\Ca$ such that $(\Ca,\E_1)$ and $(\Ca,\E_2)$ are geometric setups.  Let $f:X \to Y$ in $\Ca$, such that $f,\Delta_f \in \E_1 \cap \E_2$,  then $X \times_Y X \in \op{Ass}(\op{Corr}(\Ca)_{\E_1,\E_2})$.   
\end{theorem}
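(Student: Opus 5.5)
The plan is to exhibit $X\times_Y X$ as the image of an associative algebra under a symmetric monoidal (or at least lax monoidal) functor into $\op{Corr}(\Ca)_{\E_1,\E_2}$, in the spirit of the previous lemma. The natural source is the \emph{groupoid} object attached to $f$: the Čech nerve $\check{C}(f)_\bullet$ with $\check{C}(f)_n = X^{\times_Y (n+1)}$. A Segal groupoid in $\Ca$ is exactly the data that makes $X\times_Y X$ an algebra in spans, with multiplication given by the correspondence
\begin{equation*}
(X\times_Y X)\times (X\times_Y X) \xleftarrow{\;(p_{12},p_{23})\;} X\times_Y X\times_Y X \xrightarrow{\;p_{13}\;} X\times_Y X
\end{equation*}
and unit given by $\op{pt}\leftarrow X \xrightarrow{\Delta_f} X\times_Y X$. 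The first step is to check that these legs lie in the right classes. Here $\op{Corr}(\Ca)_{\E_1,\E_2}$ has backward legs in $\E_2$ and forward legs in $\E_1$, or the reverse depending on convention; this is why both classes appear. The unit's forward leg is $\Delta_f\in\E_1$. The map $p_{13}$ is a base change of $f$, since $X\times_YX\times_YX\to X\times_YX$ forgets the middle factor. The map $(p_{12},p_{23})$ is a base change of $\Delta_f$ along $(X\times_YX)\times(X\times_YX)\to X\times X$. More precisely, it is the pullback of $\Delta_f: X\to X\times_Y X$ along the map to the two middle coordinates, followed by the inclusion $(X\times_YX)\times_X(X\times_YX)\hookrightarrow (X\times_YX)\times(X\times_YX)$. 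That last inclusion is a base change of $\Delta_{X}$. To keep everything within $\E_1\cap\E_2$ with no extra hypothesis, I would work in the slice $\Ca_{/Y}$ with the fibre product $\times_Y$ as monoidal structure. Then the relevant backward map is exactly a base change of $\Delta_f$, and the stability of $\E_i$ under pullback and composition gives everything.

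The second step is the coherent associativity. I would not build the $A_\infty$-structure by hand. Instead I would use the following strategy. The Čech nerve $\check C(f)\colon \Delta^{op}\to \Ca_{/Y}$ is a groupoid object, so it satisfies the Segal condition. A Segal object in $\Ca$ whose structure maps (the inert projections and the active face maps) lie in the given classes defines an algebra in the correspondence category. Concretely, one applies the functor $B$ / the $C(\Delta^n)$-construction: an $n$-simplex of the operadic nerve of $\op{Ass}$ is sent to the diagram $C(\Delta^n)\to \Ca$ whose entries are iterated fibre products $X^{\times_Y(k+1)}$. The exact squares go to Cartesian squares precisely because of the Segal/groupoid condition, namely $X^{\times_Y(a+1)}\times_X X^{\times_Y(b+1)}\simeq X^{\times_Y(a+b+1)}$. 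The symmetric monoidal structure $\op{Corr}(\Ca)^{\otimes}$ built from $(\Ca^{op})^{\coprod,op}$ handles the tensor factors. By this equivalence, the resulting map over $N(\op{Fin}_*)$ restricted to $\op{Ass}^\otimes$ is an algebra.

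The main obstacle is this second step. One must write down the combinatorial functor $\op{Ass}^{\otimes}$-simplices $\to \op{Corr}(\Ca_{/Y})^{\otimes}_{\E_1,\E_2}$ carefully, and check that every vertical and horizontal edge lies in $\E_1$ resp. $\E_2$ and not merely the generating ones. I expect each such edge to be a composite of base changes of $f$ and $\Delta_f$, which reduces it to the hypothesis $f,\Delta_f\in\E_1\cap\E_2$. I would first try to cite the known statement that Segal objects in $\Ca$ give algebras in spans (as in \cite[Remark 8.12]{zhu2025tamecategoricallocallanglands}), so that only this class-membership verification remains.
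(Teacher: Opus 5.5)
Your construction produces an associative algebra in $(\op{Corr}(\Ca_{/Y})_{\E_1,\E_2},\times_Y)$, with unit $Y\xleftarrow{f}X\xrightarrow{\Delta_f}X\times_YX$. The statement, however, is about $\op{Corr}(\Ca)_{\E_1,\E_2}$ with its cartesian structure, and you never go back to it. That missing step is the gap, and it is not formal.

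Forgetting to $\Ca$ does not give a symmetric monoidal functor $\op{Corr}(\Ca_{/Y})\to\op{Corr}(\Ca)$, since $A\times_YB\not\simeq A\times B$. It is only lax monoidal, with structure correspondences $A\times B\leftarrow A\times_YB\xrightarrow{=}A\times_YB$ and $*\leftarrow Y\xrightarrow{=}Y$. Lax monoidal functors do carry algebras to algebras, so everything depends on whether these backward legs are admissible.

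\begin{itemize}
\item In the paper's operadic model $\op{Corr}(\Ccc)_{\E_1^-,\E_2^+}$, the condition $\E_2^+$ applies only to the components $M\to A_i$ into the individual factors, and it is vacuous over $\langle 0\rangle\to\langle 1\rangle$. So the forgetful map is a map of $\infty$-operads and the transfer works. In fact you then do not need the slice at all: the components of your multiplication's backward leg are $p_{12}$ and $p_{23}$, both base changes of $f$, and the unit imposes no condition on $X\to *$.
\item Suppose instead you require, as in your first step, that the whole leg $X\times_YX\times_YX\to(X\times_YX)^2$ lie in $\E_2$; this leg is a base change of $\Delta_X$. Suppose you also require the unit's backward leg $X\to *$ to lie in $\E_2$; you never checked this leg. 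Then you need something like $Y\to *$ and base changes of $\Delta_Y$ in $\E_2$, for instance $\E_2=\op{all}$, which is the case used elsewhere in the paper. Without such a hypothesis the statement fails. Take $f=\op{id}_X$ and $\E_1=\E_2$ the equivalences. Then $\op{Corr}(\Ca)_{\E_1,\E_2}\simeq\Ca^{\simeq}$ with $\times$, where the unit $*\to X$ of any algebra is invertible, forcing $X\simeq *$.
\end{itemize}

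For comparison, the paper also reduces to $\op{Corr}(\Ca_{/Y})$, but it gets the algebra differently. There $X$ is self-dual, with evaluation $X\times_YX\xleftarrow{\Delta_f}X\xrightarrow{f}Y$ and coevaluation $Y\xleftarrow{f}X\xrightarrow{\Delta_f}X\times_YX$; the triangle identities reduce to one pullback square. Hence $X\times_YX\simeq X\otimes X^{\vee}\simeq\underline{\op{End}}(X)$ is an associative algebra for formal reasons, with all higher coherences included. The paper then pushes forward along $\op{Corr}(\Ca_{/Y})\to\op{Corr}(\Ca)$, which it calls symmetric monoidal, so the caveat above applies to its argument as well.

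Your Segal/\v{C}ech-nerve route is more general, since it works for any Segal object whose structure maps lie in the classes, and it makes the convolution explicit. Its coherence step is, however, harder than you suggest. The standard ``Segal objects are algebras in spans'' result gives an algebra for the composition product $\times_X$ on spans from $X$ to $X$, not for $\times_Y$. You would still need a lax monoidal comparison, whose backward legs are base changes of $\Delta_f$, or an explicit simplicial construction. The duality argument avoids both.
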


The theorem shall follow from the following slight generalization of \cite[Proposition 2.3.9]{heyer20246functorformalismssmoothrepresentations}. For the sake of completeness, we state and prove the proposition that follows similar arguments to those stated in the above reference.

\begin{proposition}\label{dualizableobjinCorr}
    Let $X \in \op{Corr}(\Ca)_{\E_1,\E_2}$ such that $p_X: X \to *, \Delta_X \in \E_1 \cap \E_2$, then $X$ is dualizable in $\op{Corr}(\Ca)_{\E_1,\E_2}$ with $X^{\vee}=X$. 
\end{proposition}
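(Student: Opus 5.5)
We exhibit explicit evaluation and coevaluation morphisms and check the triangle identities directly in the homotopy category of $\op{Corr}(\Ca)_{\E_1,\E_2}$. Here a morphism $A \to B$ is a span $A \leftarrow W \to B$ whose backward leg lies in $\E_2$ and whose forward leg lies in $\E_1$. The symmetric monoidal structure is induced by the cartesian product of $\Ca$, with unit the final object $*$. Composition of spans is computed by fibre products in $\Ca$, so it suffices to compute the relevant composites as spans and identify them with the identity span $X \xleftarrow{\op{id}} X \xrightarrow{\op{id}} X$.

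\textbf{The candidate maps.} Take $X^{\vee}=X$ and define:
\begin{equation*}
\op{ev}: X\times X \xleftarrow{\Delta_X} X \xrightarrow{p_X} *, \qquad \op{coev}: * \xleftarrow{p_X} X \xrightarrow{\Delta_X} X\times X .
\end{equation*}
Both are legitimate morphisms because each leg is either $\Delta_X$ or $p_X$, and both lie in $\E_1\cap\E_2$ by hypothesis. This is exactly where the hypothesis is used.

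\textbf{The first triangle identity.} We compute the composite
\begin{equation*}
X \xrightarrow{\op{coev}\times \op{id}} X\times X\times X \xrightarrow{\op{id}\times \op{ev}} X .
\end{equation*}
\begin{itemize}
\item The first span is $X \xleftarrow{\op{pr}_2} X\times X \xrightarrow{\Delta_X\times \op{id}} X^3$.
\item The second span is $X^3 \xleftarrow{\op{id}\times\Delta_X} X\times X \xrightarrow{\op{pr}_1} X$.
\item Their composite is the fibre product of $\Delta_X\times\op{id}$ and $\op{id}\times\Delta_X$ over $X^3$. A point of it is a pair $((a,b),(c,d))$ with $(a,a,b)=(c,d,d)$, so the fibre product is $X$ via the diagonal.
\item The resulting legs $X\to X$ are both the identity, so the composite is the identity span.
\end{itemize}
The second triangle identity follows by the symmetric argument, or by applying the symmetry of the monoidal structure.

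\textbf{The main obstacle.} The one real point is that dualizability is a homotopy-category statement, yet the fibre product above must be identified in the $\infty$-category $\Ca$, not merely on points. To do this, I would exhibit the commutative cube expressing the equivalence
\begin{equation*}
(X\times X)\times_{X^3}(X\times X)\simeq X,
\end{equation*}
writing $X^3$ as $(X\times X)\times X$. The identification then comes from pasting two pullback squares:
\begin{itemize}
\item the square exhibiting $X\simeq X\times_{X\times X}X$ via the diagonal;
\item the base-change square of $\Delta_X$ along projections.
\end{itemize}
Finally, one checks that the induced $2$-simplices in $\op{Corr}$ are supplied by the definition of composition through exact squares mapping to pullback squares. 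This makes the identity on the homotopy category canonical, which proves the proposition. \cref{abstractconvolutionproduct} then follows by applying the proposition in the slice over $Y$.
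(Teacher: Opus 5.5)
Your $\op{ev}$ and $\op{coev}$, and your reduction of the first triangle identity to identifying $(X\times X)\times_{X^3}(X\times X)$ (taken along $\Delta_X\times\op{id}_X$ and $\op{id}_X\times\Delta_X$), are exactly the paper's argument. The paper in fact stops at ``we need to show that the square is a pullback square'' without checking it.

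The problem is in the step you yourself single out as the real point. You propose to paste with ``the square exhibiting $X\simeq X\times_{X\times X}X$ via the diagonal''. That square is a pullback only when $\Delta_X$ is a monomorphism, because the canonical map $X\to X\times_{X\times X}X$ is the diagonal of $\Delta_X$. In a general $\infty$-category, $X\times_{X\times X}X$ is the free loop object. For algebraic stacks, which are the setting of the paper's applications, it is the inertia stack: for $X=BG$ you get the adjoint quotient $[G/G]$ rather than $BG$. So as written, your justification only covers the case of a monomorphic diagonal.

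The self-intersection of the diagonal never actually enters. Keep your base-change square
\begin{equation*}
\begin{tikzcd}
X\times X \arrow[r,"\Delta_X\times\op{id}_X"] \arrow[d,"\op{pr}_1"] & X\times X\times X \arrow[d,"\op{pr}_{12}"] \\
X \arrow[r,"\Delta_X"] & X\times X
\end{tikzcd}
\end{equation*}
and pair it with the identity $\op{pr}_{12}\circ(\op{id}_X\times\Delta_X)=\op{id}_{X\times X}$. Pasting then gives
\begin{equation*}
(X\times X)\times_{X^3}(X\times X)\simeq X\times_{X\times X}(X\times X)\simeq X,
\end{equation*}
where the middle fibre product is taken along $\Delta_X$ and $\op{id}_{X\times X}$. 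Both projections to $X\times X$ are $\Delta_X$, so both legs of the composite span are $\op{id}_X$.

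On points, the identifications $a\simeq c$, $a\simeq d$, $b\simeq d$ contain no cycle, which is why no loops appear. By contrast, $X\times_{X\times X}X$ imposes two identifications between the same two points. With this replacement your argument is complete, and it supplies the verification the paper omits.
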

\begin{proof}
Given the conditions of morphisms, we have the following morphisms in $\op{Corr}(\Ca)_{\E_1,\E_2}$ :
\begin{equation}
    \op{coev}_X :\begin{tikzcd}
        * & X \arrow[l,"p_X"]\arrow[d,"\Delta_f"] \\
        {} & X \times X
    \end{tikzcd}\quad\quad \op{ev}_X:
    \begin{tikzcd}
        X \times X & X \arrow[l,"\Delta_f"] \arrow[d,"p_X"] \\
        {} & *.
    \end{tikzcd}
\end{equation}
We claim these are coevaluation and evaluation maps for $X$. Thus, we need to show the identities in the condition of dualizability. We shall show one of the identities, i.e., the composition:
\begin{equation}
    X = X \times \op{id} \xrightarrow{ \op{coev}_X \times \op{id}} X \times X \times X \xrightarrow{\op{id}_X \times \op{ev}_X} X
\end{equation}
is equivalent to $\op{id}_X$.  Writing the above composition explicitly as a diagram of correspondences yields the following diagram:
\begin{equation}
    \begin{tikzcd}
        X & X \times X \arrow[l,"p_X \times \op{id}_X"] \arrow[d,"\Delta_X \times \op{id}_X"] & X \arrow[l,"\Delta_X"] \arrow[d,"\Delta_X"] \\
        {} & X \times X \times X  & X  \times X \arrow[l,"\op{id}_X \times \Delta_X"] \arrow[d,"\op{id}_X \times p_X"] \\
        {} & {} & X
    \end{tikzcd}
\end{equation}
 The above composition is equivalent to $\op{id}_X$ if the above diagram is a $2$-simplex in  $\op{Corr}(\Ca)_{\E_1,\E_2}$. We need to show that the square in the above diagram is a pullback square. 

\end{proof}

\begin{proof}[Proof of \cref{abstractconvolutionproduct}] Let $f: X \to Y$. Then $f$ induces a symmetric monoidal functor (follows from description of coCartesian morphisms \cite[Lemma 4.1.11]{chowdhury2025sixfunctorformalismsiiiconstruction})
\begin{equation}
    \op{Corr}(\Ca_{/Y})_{\E_1,\E_2} \to \op{Corr}(\Ca)_{\E_1,\E_2}.
\end{equation}
As a symmetric monoidal functor preserves associative algebra objects, it is enough to show that $X \times_Y X \in \op{Ass}(\op{Corr}(\Ca_{/Y})_{\E_1,\E_2})$. As $Y$ is the final object in this category, the fiber product is usual in $\op{Corr}(\Ca)_{\E_1,\E_2}$.As $f$ and $\Delta_f$ are in $\E_1 \cap \E_2$, by \cref{dualizableobjinCorr} it follows that $X$ is dualizable in $\op{Corr}(\Ca)_{\E_1,\E_2}$. By \cite[Lemma B.1.16]{heyer20246functorformalismssmoothrepresentations}, we see that $\underline{End}(X,X)$ exists in $\op{Corr}(\Ca_{/Y})_{\E_1,\E_2}$ and it is equivalent to $X \times_{Y} X$. The proof concludes from the fact in a symmetric monoidal category $\D$, the internal Hom $\underline{Hom}(d,d)$ is an associative algebra object of $\D$.

\end{proof}

\begin{remark}
     The object $X \times_Y X$ being an associative algebra object, admits a map from the unit and multiplication which can be described as follows:
   \begin{enumerate}
       \item \underline{Unit map:} \begin{equation}
           \begin{tikzcd}
               * & X \arrow[l]\arrow[d,"\Delta_f"]\\
               {} & X \times_Y X.
           \end{tikzcd}
       \end{equation}
       \item \underline{Multiplicaltion Map:}
       \begin{equation}
           \begin{tikzcd}
               X \times_Y X  \times X \times_Y X & X \times_Y X \times_Y X \arrow[l,"\op{id} \times \Delta_f \times \op{id}"] \arrow[d,"\op{id} \times f \times \op{id}"] \\
               {} & X \times_Y X.
           \end{tikzcd}
       \end{equation}
   \end{enumerate}
\end{remark}

\subsection{Extension along geometric and exceptional pairs.}

In this section, we recall the extension of abstract six-functor formalisms to larger geometric setups. In particular, we recall the two specific extensions as discussed in \cite{chowdhury2025sixfunctorformalismsiiiconstruction}: geometric and exceptional pairs. Let us recall the definitions of these two notions.

\begin{definition}\label{Definition:Nice geometric and exceptional pairs.}
\begin{enumerate}
    \item An inclusion of two marked $\infty$-categories $(\Ca,\S,\E) \subset (\Ca',\S',\E')$ is a \textit{nice geometric pair} if the following conditions hold :
    \begin{enumerate}
        \item Each of the four pairs $(\Ca,\S),(\Ca,\E),(\Ca,\S')$ and $(\Ca',\E')$ are geometric setups.
        \item $\S' \cap \Ca_1 =\S$.
        \item For  $X' \in \Ca'$, there exists a morphism $x : X \to X'$ called an \textit{atlas} such that $X \in \Ca$ and for every $Y \to X'$ where $Y \in \Ca$, the base change $Y \times_{X'} X \to Y$ lies in $S$.
        \item For every $f : X' \to Y'$ in $\E'$ and for every atlas $y : Y \to Y'$, the base change morphism $X' \times_{Y'}Y \to Y$ is in $\E.$
    \end{enumerate}
    \item An inclusion of $2$-marked $\infty$-categories $(\Ca,\S,\E) \subset (\Ca,\S,\E')$ is an \textit{exceptional pair} if the following conditions are satisfied :
    \begin{enumerate}
        \item The pairs $(\C,\S),(\Ca,\E),(\Ca,\E')$ are geometric setups.
        \item $\S \subset \E$.
        \item For every $f: X \to Y$ in $\E'$, there exists a morphism of augmented simplicial objects 
    \begin{equation*}
        f_{\bb} : X_{\bb} \to Y_{\bb}
        \end{equation*}
        where 
        \begin{itemize}
            \item $f_{-1}=f$
            \item $f_n \in E$ for $n \ge 0$,
            \item $X_{\bb} \to X$ and $Y_{\bb} \to Y$ are $\S$-hypercovers.
        \end{itemize}
        
    \end{enumerate}
    \end{enumerate}
\end{definition}

    \begin{example}

    \begin{enumerate}
      \item Let $\Ca= \op{Sch}$ be the category of schemes and $\Ca'= \op{Algst}$ be the $(2,1)$-category of algebraic stacks. Let $\D$ be an abstract six-functor formalism which has descent along smooth surjective morphisms.  Considering $\S$ and $\S'$ as smooth surjections of schemes and algebraic stacks respectively along with $\E$ and $\E'$ be representable morphisms of locally of finite type of schemes and algebraic stacks respectively, we see that $(\Ca,\S,\E) \subset (\Ca',\S',\E')$ is a nice geometric pair.
         \item Let $\Ca$ be the category of schemes and $\E$ be the collection of morphisms that are separated and of finite type. Let $\D$ be a formalism which has descent along Zariski covers. Let $\S$ be the collection of Zariski covers. It follows from the definition that any morphism $f: X \to Y$ which is locally of finite type admits a morphism of augmeneted simplicial objects $f_{\bb}: X_{\bb} \to Y_{\bb}$ where $X_{\bb}$ and $Y_{\bb}$ are Zariski hypercovers and $f_n$ for $n \ge 0$ is separated and of finite type. Thus,, letting $\E'$ be the locally of finite type morphisms, shows that $(\Ca,\S,\E) \subset (\Ca,\S',\E')$ is an exceptional pair.
         \end{enumerate}
        \end{example}
The main theorem that involves extension along these pairs if the following:

\begin{theorem}\label{Theorem:extensionalonggeometricandexceptionalpairs}
    \begin{enumerate}
        \item Let $(\Ca,\S,\E) \subset (\Ca,\S',\E')$ be a nice geometric pair and let 
        \begin{equation}
            \D_{(\Ca,\E)} : \CrrCpEal \to \op{Cat}_{\infty}
        \end{equation}
        be an abstract six-functor formalism with the property that $\D^*$ satisfies descent along $\S$-\v{C}ech covers. Then $\D_{(\Ca,\E)}$ can be extended to an abstract six-functor formalism:
        \begin{equation}
            \D_{(\Ca',\E')} : \op{Corr}(\Ca')_{\E',\op{all}} \to \op{Cat}_{\infty}
        \end{equation}
        \item Let $(\Ca,\S,\E) \subset (\Ca,\S,\E')$ be an exceptional pair and let 
        \begin{equation}
            \D_{(\Ca,\E)} : \CrrCpEal \to \op{Cat}_{\infty}
        \end{equation}
        be an abstract six-functor formalism with the property that $\D_!$ satisfies codescent along $\S$-hypercovers. Then $\D_{(\Ca,\E)}$ can be extended to an abstract six-functor formalism:
        \begin{equation}
            \D_{(\Ca,\E')} : \op{Corr}(\Ca)_{\E',\op{all}} \to \op{Cat}_{\infty}
        \end{equation}
    \end{enumerate}
\end{theorem}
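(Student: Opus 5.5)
Both extensions will be built with the same two-step recipe used in \cite{chowdhury2025sixfunctorformalismsiiiconstruction}: a right Kan extension along the inclusion of correspondence categories, followed by a check that the result is again a six-functor formalism. We take the extended formalism to be determined by the restrictions $\D^*$ and $\D_!$, together with the compatibilities recorded by the multisimplicial nerve of Liu--Zheng.

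\textbf{Part (1), nice geometric pairs.} Fix $X'\in\Ca'$ and write $\Ca_{/X'}^{\S}$ for the category of $\S$-atlases $X\to X'$ with $X \in \Ca$. We define
\begin{equation*}
\D(X'):=\lim_{X\in \Ca_{/X'}^{\S}}\D(X)
\end{equation*}
along $*$-pullbacks. Since $\D^*$ satisfies descent along $\S$-\v{C}ech covers, this limit can equivalently be computed on the \v{C}ech nerve of a single atlas. To make this functorial, we consider the inclusion $\op{Corr}(\Ca)_{\E,\op{all}}\to \op{Corr}(\Ca')_{\E',\op{all}}$ and right Kan extend $\D$ along it. We then verify pointwise that the value at $X'$ is the limit above. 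For this we show that the relevant comma category contains, as a cofinal subcategory, the category of atlases viewed as correspondences of the form $X'\leftarrow X = X$. The argument uses condition (c): base changes of atlases lie in $\S$. It also uses condition (d): base changes of $\E'$-maps along atlases lie in $\E$, so that every span $X'\leftarrow Z\to Y$ with $Y\in\Ca$ refines through an atlas.

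The symmetric monoidal structure is handled the same way, by Kan extending along the operadic inclusion $\op{Corr}(\Ca)^{\otimes}_{\E,\op{all}}\to\op{Corr}(\Ca')^{\otimes}_{\E',\op{all}}$. It remains to check that $\otimes$ is preserved, which follows because atlases are stable under products.

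\textbf{Part (2), exceptional pairs.} Here the objects do not change, only the class of exceptional edges, which grows from $\E$ to $\E'$. For $f\in\E'$ we pick the augmented simplicial resolution $f_\bullet$ provided by the definition and set
\begin{equation*}
f_!:=\op{colim}_n f_{n!}.
\end{equation*}
Codescent of $\D_!$ along $\S$-hypercovers identifies $\D(Y)$ with $\op{colim}_n\D(Y_n)$, so this colimit defines a functor on $\D(X)\simeq\op{colim}\D(X_n)$. Functorially, we realize this as a left Kan extension of $\D$ on the $!$-direction along $\op{Corr}(\Ca)_{\E,\op{all}}\subset\op{Corr}(\Ca)_{\E',\op{all}}$, while the $*$-direction is left unchanged. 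Base change and the projection formula for $\E'$ are then checked levelwise: each $f_n$ lies in $\E$, and $\S\subset\E$ guarantees that the hypercover maps are compatible with $!$.

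\textbf{Main obstacle.} The hard step in both parts is showing that the Kan extension, computed as a functor out of $\op{Corr}$, sends $2$-simplices built from pullback squares to the correct base change equivalences. Equivalently, the pointwise formula must hold on the whole multisimplicial nerve and not just on objects and edges. We plan to handle this with the Liu--Zheng multisimplicial descent technique (\cite{liu2017enhanced}): encode $\op{Corr}$ as a bisimplicial set whose two directions record $\S$-covers and $\E$-edges, then apply the ``$\infty$-categorical descent'' lemma. That lemma says that restriction along a cofinal family of covers induces a trivial fibration on the mapping spaces of functors. The cofinality input is exactly conditions (c)/(d) in Part (1) and the existence of the resolutions $f_\bullet$ in Part (2).
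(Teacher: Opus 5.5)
The paper does not prove this statement; it recalls it from \cite{chowdhury2025sixfunctorformalismsiiiconstruction}, which uses the multisimplicial descent technique of \cite{liu2017enhanced}. For (2), that technique is applied to the right adjoints $(-)^!$, for which codescent of $\D_!$ becomes descent along $\S$-hypercovers. Your closing paragraph points that way, but the construction you actually propose is a Kan extension along correspondence categories, and in part (2) it has a genuine gap.

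\textbf{Part (2).} The inclusion $i\colon\op{Corr}(\Ca)_{\E,\op{all}}\subset\op{Corr}(\Ca)_{\E',\op{all}}$ is the identity on objects but adds morphisms. It is therefore not fully faithful, and a Kan extension along it need not restrict to $\D$.
\begin{itemize}
\item Factoring spans into a backward and a forward leg, $(i_!\D)(X)$ is a colimit of $\D(W)$ over the category of $\E'$-maps $W\to X$, with $\E$-maps over $X$ between them.
\item $(X,\op{id})$ is terminal only among those $W\to X$ that already lie in $\E$.
\item So the real content of (2) is that $\D_!$ on this category is left Kan extended from its $\E$-part. That is, $\D(W)\simeq\op{colim}\D(W')$ over $\E$-maps $W'\to W$ whose composite to $X$ lies in $\E$.
\end{itemize}
This is exactly where the resolutions $f_\bullet$ and codescent must enter, and you neither state nor prove it. The colimit must also be formed in $\op{Pr}^L$, i.e.\ as a limit of the $(-)^!$, not in $\op{Cat}_\infty$; this is why the reference passes to adjoints first. ``Kan extending in the $!$-direction while leaving the $*$-direction unchanged'' does not define a functor out of $\op{Corr}$. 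Levelwise checks of base change and the projection formula only concern single edges. What is missing is independence of the choice of $f_\bullet$ and coherence under composition.

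\textbf{Part (1).} Here your route is a sound alternative in outline, different from the reference's.
\begin{itemize}
\item Applying (d) to identity atlases shows that an $\E'$-map with target in $\Ca$ lies in $\E$. Hence $\op{Corr}(\Ca)_{\E,\op{all}}\to\op{Corr}(\Ca')_{\E',\op{all}}$ is fully faithful and the right Kan extension restricts to $\D$.
\item Being a Kan extension, it is automatically a functor. So your ``main obstacle'' (compatibility with $2$-simplices) is misplaced; the work is the pointwise identification.
\item Your cofinality claim is wrong as stated. The atlases $X'\leftarrow X=X$ are not initial in the comma category: for an object $X'\xleftarrow{g}Z\to Y$ this would require $g$ to factor through an atlas, which the axioms do not provide.
\item The correct initial subcategory is all of $(\Ca_{/X'})^{\op{op}}$, embedded as spans $X'\xleftarrow{g}Z=Z$. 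Descent is then what lets you pass from $\lim_{(\Ca_{/X'})^{\op{op}}}\D$ to the \v{C}ech nerve of a single atlas, via the covering sieve it generates.
\item The monoidal part needs an argument that the Kan-extended lax cartesian structure stays lax cartesian, i.e.\ an initiality computation for comma categories of tuples. Stability of atlases under products does not by itself supply this.
\end{itemize}
This route replaces the Liu--Zheng machinery in (1) by an ordinary cofinality computation. It does not dualise for free to (2), precisely because there the inclusion is not fully faithful.
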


\section{Functorial Cohomological Purity}
In this section, we prove the functorial version of cohomological purity isomorphism $f_{\#} \cong  f_!\Sigma_f$. We at first construct the functorial version of $f_!\Sigma_f$ denoted by $\D^{\Sigma}_!$. This involves us diving into the technical construction of maps from multisimplicial nerves and proving \cite[Theorem 1.0.5]{chowdhury2025sixfunctorformalismsii} without the admissible edge condition. Once we have constructed the map, we use this to construct the natural transformation.

To every $f : X \to Y$ a morphism of schemes which is separated and locally of finite type we have the purity transformation functor :
\begin{equation}
    f_{\#} \to f_! \circ \Sigma_f
\end{equation}

The goal of this section is to construct a functor in the $\infty$-categorical setting 
\begin{equation}
    \D_{!}^{\Sigma} : \op{Sch} \to \op{Cat}_{\infty}
\end{equation}
which sends 
\begin{equation}
    f \longmapsto f_! \circ \Sigma_f
\end{equation}

The general theorem is the following : 

\begin{theorem} \label{puritysigmaequvialencefunctorial}
    Let $(\Ca,\E,\I,\P)$ be a Nagata setup and let 
    \begin{equation}
        \D_{(\Ca,\E)}:  \op{Corr}(\Ca)^{\otimes}_{\E,\op{all}} \to \op{Cat}_{\infty}
         \end{equation}
        be an abstract six-functor formalism with the following properties :
        \begin{enumerate}
            \item There exists a set of edges $\mathcal{S}$ stable under pullbacks and compositions containing $\I$ and contained in $\op{HL}$.
            \item The natural transformation $\op{Ex}_{\#!}$ is an equivalence for pullback squares.
            \end{enumerate}
        Then the assignment $ f \mapsto f_!\Sigma_f$ admits an enhancement to an $\infty$-functor 
        \begin{equation}
            \D^{\Sigma}_! : \Ca_{\E \cap \mathcal{S}} \to \op{Cat}_{\infty}
        \end{equation}
        which sends $X \mapsto \D(X)$
 and 
 \begin{equation}
     f \longmapsto f_!\Sigma_f \cong f_{\#}.
 \end{equation}
    
    \end{theorem}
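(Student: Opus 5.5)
We construct $\D^{\Sigma}_!$ by mimicking the Liu--Zheng construction of $\D_!$ from a Nagata setup, which \cref{theorem: mainconstructiontheorem} recalls. There $\D_!$ is glued from the covariant functor $\D_{\#}$ on $\Ca_{\I}$ and the covariant functor $\D_*$ on $\Ca_{\P}$, using the $\infty$-compactification theorem. Here we instead glue $\D_{\#}$ on $\Ca_{\E\cap\S}$ with $\D_!$ on $\Ca_{\E}$, and use the diagonal $\delta_f$ to insert the twist $\Sigma_f = p_{1\#}\delta_{f!}$.

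\textbf{Step 1: build a bisimplicial diagram.} Consider the bisimplicial set whose $(n,m)$-simplices are grids $\Delta^n\times\Delta^m\to\Ca$. The first direction consists of edges in $\E\cap\S$, the second of edges in $\E$, and every square is Cartesian. This is the multisimplicial nerve of Liu--Zheng, with markings $(\E\cap\S,\E)$. We need two things from it:
\begin{enumerate}
\item the restriction of $\D_{(\Ca,\E)}$, together with the left adjoints $f_{\#}$, to a functor on this nerve;
\item a comparison between this nerve and $\Ca_{\E\cap\S}$.
\end{enumerate}
For item 1, the functor sends $\S$-edges to $f_{\#}$ and $\E$-edges to $g_!$. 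The $\S$-edges are in $\op{HL}$, so $f_{\#}$ exists. The required commutativity on squares is exactly $\op{Ex}_{\#!}$, which is an equivalence by hypothesis (2). We obtain this functor from the partial adjoint theorem \cite[Theorem 3.2.1]{chowdhury2025sixfunctorformalismsiiiconstruction}, applied to $\D^*$ restricted along $\Ca^{\op{op}}\to\op{Corr}$, with $\op{Ex}_{\#!}$ playing the role of left adjointability.

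\textbf{Step 2: the compactification variant (main obstacle).} We must show that the diagonal inclusion $\Ca_{\E\cap\S}\to\delta^*_{2}$ of this bisimplicial nerve is a categorical equivalence. Concretely, a morphism $f$ is sent to the square
\begin{equation*}
\begin{tikzcd}
X \arrow[r,"\delta_f"] & X\times_Y X \arrow[r,"p_1"] \arrow[d,"p_2"] & X \arrow[d,"f"] \\
{} & X \arrow[r,"f"] & Y
\end{tikzcd}
\end{equation*}
that is, to the factorization $f = f\circ p_2\circ\delta_f$, with $\S$-part $p_1$ and $\E$-parts $\delta_f$ and $p_2$.

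This is the analogue of \cite[Theorem 1.0.5]{chowdhury2025sixfunctorformalismsii}, which proves such an equivalence for the pair $(\I,\P)$. The difference is that here we lack the admissibility (2-out-of-3) condition for $\E\cap\S$, because we use the canonical factorization through the graph rather than arbitrary compactifications. We follow the same strategy. For each simplex of $\Ca_{\E\cap\S}$, the category of factorizations through iterated fiber products is weakly contractible, because it has an initial object given by the diagonal/graph factorization. Then Liu--Zheng's criterion (\cite[Theorem 1.0.4]{chowdhury2023sixfunctorformalismsi}, via a Cartesian fibration whose fibers are these categories) gives a trivial fibration over $\Ca_{\E\cap\S}$. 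The delicate point is checking functoriality of the graph factorizations for higher simplices, i.e.\ that iterated fiber products $X_0\times_{X_n}\cdots$ assemble into a simplicial object. We expect to handle this with the pullback-stability of $\E$ and $\S$, and by replacing the use of admissibility with the fact that diagonals of $\E$-maps lie in $\E$ (as in the Nagata setup, where morphisms of $\I\cap\P$ are truncated).

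\textbf{Step 3: compose and identify.} Composing the inverse equivalence from Step 2 with the functor from Step 1 yields $\D^{\Sigma}_!:\Ca_{\E\cap\S}\to\op{Cat}_\infty$. On an edge $f$ it gives $f_!\,p_{2!}\,\delta_{f!}$ composed with the $\#$-part $p_{1\#}$. Using $\op{Ex}_{\#!}$ on the Cartesian square above, this is identified with $f_!\,p_{1\#}\,\delta_{f!} = f_!\Sigma_f$. Finally, $f_!\Sigma_f\cong f_\#$ is the objectwise purity equivalence $\op{Pur}_f$. This follows because $\op{Pur}_f$ is constructed as the exchange transformation on the very same diagram, and $\op{Ex}_{\#!}$ is invertible.
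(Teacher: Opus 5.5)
\textbf{There is a genuine gap in Step 2.} Your Step 1 matches the paper's functor $\D^{\op{cart}}_{\#!}$, but Step 2 is where the argument breaks, and it is where the content of the theorem lies. First, the map from $\Ca_{\E\cap\S}$ to $\dd^*_2$ of the Cartesian bisimplicial nerve cannot be a categorical equivalence. For any $g\colon X\to Y$ in $\E$, the Cartesian square with identity $\S$-sides and $g$ on both vertical sides is an edge $X\to Y$ of $\dd^*_2$. Restriction to the diagonal sends this edge to $g$, so the mapping spaces of $\dd^*_2$ see all of $\E$, not just $\E\cap\S$.

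Second, the graph diagram you attach to $f$ is not an edge from $X$ to $Y$ of that nerve. Its Cartesian square has corner $X\times_Y X$, and $\delta_f$ is extra data off the grid. It cannot be declared an $\S$-edge, since $\S$ has no 2-out-of-3. Evaluating a single Cartesian square from $X$ to $Y$ only ever gives a composite of one $\S$-step and one $\E$-step, so $\delta_{f!}$ never appears. The result you cite is not a compactification-type equivalence. It is the extension along $p_{\op{cart}}$ from Cartesian to non-Cartesian squares, which is a lifting statement. Your Step 3 bookkeeping shows the same confusion: $f_!p_{2!}\delta_{f!}\simeq f_!$, whereas the composite read off the graph diagram is $f_!p_{1\#}\delta_{f!}\simeq f_\#p_{2!}\delta_{f!}$.

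\textbf{The missing idea.} Send $f$ to the \emph{non-Cartesian} square $\op{Sq}(f)$, with identity top and left sides and $f$ on the right and bottom. This is where $f\in\E\cap\S$ is used: $f$ occurs both as an $\S$-edge and as an $\E$-edge. The Cartesianization of $\op{Sq}(f)$ through $X\times_Y X$ is exactly your graph diagram, with comparison map $\delta_f$. So $\op{Sq}$ lands in $\dd^*_2\Ca^R_{\S,\E}$. What is needed is to extend $\D^{\op{cart}}_{\#!}$ along $p_{\op{cart}}\colon\dd^*_2\Ca^{Q,\op{cart}}_{\S,\E}\to\dd^*_2\Ca^R_{\S,\E}$ and then compose with $\op{Sq}$; no equivalence is required. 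This extension is \cref{Theorem: Ext along non adm edges}.

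The input replacing admissibility of $\S$ is combinatorial. In $\Cart^n$, the edges $x\to y$ with $\pi^n(x)=\pi^n(y)$, which is where $\delta_f$ lives, are removed from the $\S$-marking. They are therefore only required to lie in $\E$, as $\delta_f$ does. The map $\epsilon^{\S}_n$ is built from $\Lambda^n_{\alpha_x}$ and $\mu^n_n$ so that, using $\mu^n_n(x,y)=y$, such edges go to degenerate squares compatible with this smaller marking. Weak contractibility then comes from $\Krt(\tau)$ and the inner anodyne inclusion $\boxplus^n_{\op{cart}}\hookrightarrow\Cart^n$, not from an initial graph factorization. The truncation of $\I\cap\P$ you invoke plays no role. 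Your phrase ``we expect to handle this'' leaves exactly this higher-simplex coherence undone.
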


Let us recall that for $f : X \to Y$, the functor  $\Sigma_f$ is defined as $\op{Th}(f,\delta_f)= \op{pr}_{1\#}\delta_{f!}$ where the maps are 
\begin{equation}
    X \xrightarrow{\delta_f} X \times_Y X \xrightarrow{\op{pr}_1} X
\end{equation}

The functor $f_!\Sigma_f$ arises from decomposing the commutative square :
\begin{equation}
    \begin{tikzcd}
        X \arrow[r,"\op{id}"] \arrow[d,"\op{id}"] & X \arrow[d,"f"] \\
        X \arrow[r,"f"] & Y
    \end{tikzcd}
\end{equation}

into the following diagram :
\begin{equation}
    \begin{tikzcd}
        X \arrow[dr,"\delta_f"] \arrow[drr,bend left=50,"\op{id}"] & {} & {} \\
        {} & X \times_Y X \arrow[d,"\op{pr}_2"] \arrow[r,"\op{pr}_1"] & X \arrow[d,"f"] \\
        {} & X \arrow[r,"f"] & Y 
    \end{tikzcd}
\end{equation}
One can realise this diagram as two pullback squares :
\begin{equation}
    \begin{tikzcd}
     X \arrow[d,"\delta_f"] \arrow[r,"\op{id}"]  & X \arrow[d,"\delta_f"] & {} \\
     X \times_Y X \arrow[r,"\op{id}"] & X \times_Y X \arrow[d,"\op{pr}_2"] \arrow[r,"\op{pr}_1"] & X \arrow[d,"f" ]\\
     {} & X \arrow[r, "f"] & Y
    \end{tikzcd}
\end{equation}
The functor $f_!\Sigma_f$ follows from applying the base change $\op{Ex}_{\#!}$ to both of the squares as a composition.  In order to compose these functors for an $n$-compoasble chain of morphisms, we need to use the simplicial techniques developed by Liu-Zheng in \cite{Gluerestnerv} and reproved in \cite{chowdhury2025sixfunctorformalismsii}. In particular, we shall use a variant of the arguments in   \cite[Section 5]{chowdhury2025sixfunctorformalismsii} where one deals with \textit{simplicial set of cartesianizations} $\Cart^n$.\\

We shall freely use the language of multi-simplicial sets (the tiled and marked versions) as introduced in \cite{liu2017enhanced} or \cite{chowdhury2025sixfunctorformalismsii}.\\

\textbf{First steps of the proof of \cref{puritysigmaequvialencefunctorial} :}
\begin{enumerate}
    \item We have a map 
    \begin{equation}
        \op{Sq}:\Ca_{\E \cap \mathcal{S}} \to \dd^*_2\Ca_{\mathcal{S},\E}
    \end{equation}
    where objects goes to same objects. For a $n$-composable chain of morphisms :
    \begin{equation}
      \tau:  X_0 \xrightarrow{f_0} X_1 \cdots X_{n-1} \xrightarrow{f_{n-1}} X_n
    \end{equation}
$\op{Sq}(\tau)$ is the following $n$ times $n$ grid :

\begin{equation}
    \begin{tikzcd}
        X_0 \arrow[r, "\op{id}"] \arrow[d,"\op{id}"] & X_0 \arrow[d,"f_0"] \arrow[r,"\cdots"] & 
       {} \arrow[r,"\cdots"] \arrow[d,"\vdots"]  &\arrow[r,"\cdots"] \arrow[d,"\vdots"] & X_0 \arrow[d,"f_0"] \\
        X_0 \arrow[d,"\vdots"] \arrow[r,"f_0"] & X_1 \arrow[r,"\cdots"]\arrow[d,"\vdots"] & {} \arrow[d,"\vdots"] \arrow[r,"\cdots"]&{} \arrow[d,"\vdots"] \arrow[r,"\cdots"] & X_1 \arrow[d,"\vdots"] \\
        {} \arrow[d,"\vdots"] \arrow[r,"\cdots"] & {} \arrow[d,"\vdots"] \arrow[r,"\cdots"] & {} \arrow[r,"\cdots"] \arrow[d,"\vdots"] & X_{n-1} \arrow[d,"\op{id}"] \arrow[r,"\op{id}"] & X_{n-1} \arrow[d,"f_{n-1}"] \\
        X_0 \arrow[r,"f_0"] & {} \arrow[r,"\cdots"] & {} \arrow[r,"\cdots"] & X_{n-1} \arrow[r,"f_{n-1}"] & X_n
    \end{tikzcd}
\end{equation}

\item
We shall use the following notation 
\begin{notation}\label{generalpullbackbasechangenotation}
Let $Q$ be the tiling of the two marked simplicial set $(\Ca,\{\S,\E\})$ given by the pullback squares for which $\op{Ex}_{\#!}$ is an equivalence. Let $R$ be the tiling of the same two marked simplicial set given by squares 
\begin{equation}
    \begin{tikzcd}
        X' \arrow[r]\arrow[d] & X \arrow[d]\\
        Y' \arrow[r] & Y
    \end{tikzcd}
\end{equation}
such that when decomposed, the pullback square in the diagram 
\begin{equation}
    \begin{tikzcd}
        X' \arrow[dr] & {} & {}\\
        {} & X'' \arrow[r] \arrow[d] & X \arrow[d] \\
        {} & Y' \arrow[r]  & Y
    \end{tikzcd}
\end{equation}
belongs to $Q$. Define
\begin{equation}
    \dd^*_2\Ca^{\op{cart},Q}_{\S,\E}:= \dd^*_2 \dd^{2*}_{\Box}(\Ca,\{(\S,\E,Q)\}) \,\, \dd^*_2\Ca^{R}_{\S,\E}:= \dd^*_2\dd^{2*}_{\Box}*(\Ca,\{(\S,\E,R)\}) .
\end{equation}
\item   Using the theorem of partial adjoints and the equivalence $\op{Ex}_{\#!}$, we have a following functor of the form:
\begin{equation}
\D^{\op{cart}}_{\#!} : \dd^*_2 \Ca^{Q,\op{cart}}_{\mathcal{S},\E} \to \op{Cat}_{\infty}
\end{equation}
which sends edges along $\mathcal{S}$ to $(-)_{\#}$ and the morphisms along $\E$ to $(-)_!$.
\item The above arguments give us the following diagram :
\begin{equation}
    \begin{tikzcd}
        {} & \dd^*_2 \Ca^{Q,\op{cart}}_{\mathcal{S},\E} \arrow[r,"\D_{\#!}"] \arrow[d,"p_{\op{cart}}"] & \op{Cat}_{\infty} \\
        \Ca_{\E \cap \S} \arrow[r,"\op{Sq}"] & \dd^*_2\Ca^R_{\mathcal{S},\E} \arrow[ur,dotted,"\D_{\#!}",swap] & {}
    \end{tikzcd}
\end{equation}
\end{notation}
\item \textbf{Goal:} Make sense of the dotted arrow $\D_{\#!}$ arising from $\D^{\op{cart}}_{\#!}$ such that $\D^{\op{cart}}_{\#!} \circ \op{Sq} = \D^{\Sigma}_!$. 
\item The existence of dotted is very similar to the statement in \cite[Theorem 1.5.1]{liu2017enhanced} which states one can extend functors to $\infty$-categories along $p_{\op{cart}}$ with a slight modification. The modification is that $\mathcal{S}$ is not admissible. 

\end{enumerate}
\subsection{ Extension along $p_{\op{cart}}$ for non-admissible edges.}
In this section, we prove the following general theorem :

  \begin{theorem}\label{Theorem: Ext along non adm edges}[Theorem B : Extension along $p_{\op{cart}}$ for non-admissible]\label{thmBvariant}
    		Let $\Ca$ be an $\infty$-category and $\E_1,\E_2$ be a collection of edges in $\Ca$ such that   $\E_1$ is admissible and $\E_2$ contains isomorphisms and is stable under pullbacks. Let $Q$ and $R$ be two collections of squares in $\Ca$ where:
            \begin{enumerate}
                \item $Q$ a subcollection of cartesian squares in $\Ca$ formed by $\E_1$ and $\E_2$ such that $(\Ca,\{\E_1,\E_2\},Q)$ form a $2$-tiled simplicial set.
                \item $R$ be a subcollection of commutative squares in $\Ca$ formed by $\E_1$ and $\E_2$ of the form \begin{equation}
    \begin{tikzcd}
        X' \arrow[r]\arrow[d] & X \arrow[d]\\
        Y' \arrow[r] & Y
    \end{tikzcd}
\end{equation}
such that when decomposed, the pullback square in the diagram 
\begin{equation}
    \begin{tikzcd}
        X' \arrow[dr] & {} & {}\\
        {} & X'' \arrow[r] \arrow[d] & X \arrow[d] \\
        {} & Y' \arrow[r]  & Y
    \end{tikzcd}
\end{equation}
belongs to $Q$.
            \end{enumerate}
             Then for any $\infty$-category $\D$, there exists a solution to the lifting problem:
\begin{equation}
\begin{tikzcd}
    \dd^*_2\Ca^{Q,\op{cart}}_{\E_1,\E_2} \arrow[d,"p_{\op{cart}}",swap] \arrow[r,"g_{\op{cart}}"] &\D\\
    \dd^*_2\Ca^R_{\E_1,\E_2} \arrow[ur,"g'_{\op{cart}}",swap,dotted] & {}.
    \end{tikzcd}
\end{equation}
where the notations are defined in \cref{generalpullbackbasechangenotation}.
\end{theorem}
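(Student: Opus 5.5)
The plan follows the proof of \cite[Theorem 1.5.1]{liu2017enhanced}, in the form reproved in \cite[Section 5]{chowdhury2025sixfunctorformalismsii}. We factor $p_{\op{cart}}$ through the \emph{simplicial sets of cartesianizations} $\Cart^n$ and build $g'_{\op{cart}}$ by induction on simplices. For an $n$-simplex $\sigma$ of $\dd^*_2\Ca^R_{\E_1,\E_2}$, which is a grid of squares in $R$, we let $\Cart(\sigma)$ be the simplicial set of all ways of refining each square of $\sigma$ into a $Q$-pullback square followed by the comparison map $X' \to X''$. These refinements are glued compatibly along faces, as in the decomposition diagram of \cref{generalpullbackbasechangenotation}. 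The refinement maps $X' \to X''$ are forced to be in $\E_1$: since $\E_1$ is admissible and $X'\to Y'$ and $X''\to Y'$ lie in $\E_1$, so does $X'\to X''$. The same argument shows that the $\E_2$-direction composite $X' \to X'' \to X$ is a composite in $\E_2$, once we record $X'\to X''$ as an identity-type edge in the $\E_2$-direction of the cartesian grid. This is where the proof departs from the admissible case: we cannot demand that $X' \to X''$ lie in $\E_2$. Instead we take the $\E_2$-marking on $\Cart(\sigma)$ to be generated by the given edges together with those degenerate refinements.

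The key steps, in order, are as follows.
\begin{enumerate}
\item Build a functorial assignment $\sigma \mapsto \Cart(\sigma)$ over $\Delta_{/\dd^*_2\Ca^R_{\E_1,\E_2}}$, with compatible maps $\Cart(\sigma) \to \dd^*_2\Ca^{Q,\op{cart}}_{\E_1,\E_2}$ and $\Cart(\sigma)\to \Delta^n$ (the latter is the structure projection).
\item Show that each $\Cart(\sigma)$ is weakly contractible. In fact it should be a Kan-contractible $\infty$-category. Pullbacks exist along $\E_1$, so the cartesian refinement of $\sigma$ is essentially unique: the space of choices of $X''$ together with the map $X'\to X''$ is a space of pullbacks, hence contractible. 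Contractibility of a grid of choices then follows by induction over the cells of the grid, using the fact that the pullback of a pullback is a pullback.
\item Use the general gluing lemma \cite[Theorem 1.0.4]{chowdhury2023sixfunctorformalismsi} to lift $g_{\op{cart}}$: its restrictions to the $\Cart(\sigma)$ assemble, through contractibility, into a map $\dd^*_2\Ca^R_{\E_1,\E_2} \to \D$. To apply the lemma we must check two things. First, every vertex of $\Cart(\sigma)$ maps to an object of $\D$ in a way coherent with faces. Second, the restriction maps $\Cart(\sigma) \to \Cart(d_i\sigma)$ are trivial fibrations, or at least weak equivalences, so the obstructions vanish.
\item Finally, check that the composite of the lift with $p_{\op{cart}}$ agrees with $g_{\op{cart}}$. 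On a cartesian $\sigma$, the canonical refinement with identity comparison maps is a vertex of $\Cart(\sigma)$, and it is functorial in $\sigma$. This gives a section through which the lift restricts to $g_{\op{cart}}$ up to homotopy, which we then strictify.
\end{enumerate}

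The main obstacle is step 2 together with the fibration property in step 3, in the absence of admissibility of $\E_2$. In the admissible case the comparison maps can be taken inside both markings, and contractibility reduces to a statement about the $\E_1\cap\E_2$ slice. Here I would instead restrict the $\E_2$-direction comparison edges to be degenerate in the multisimplicial sense. The goal is to show that $\Cart(\sigma)$ is then equivalent to a product over the cells of $\sigma$ of spaces of pullbacks in $\Ca$, with $Q$ guaranteeing that these pullbacks land in the tiling. I would first attempt this by the explicit inductive decomposition of the $n\times n$ grid used in \cite[Section 5]{chowdhury2025sixfunctorformalismsii}, filling one row at a time by inner horn extensions. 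If that stalls, I would pass to the "$\E_1$-only" cartesianization, where admissibility of $\E_1$ suffices, and transport the result along the $\E_2$-degenerate edges.
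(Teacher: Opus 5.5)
Your framework is right: cartesianize each simplex, use a contractible space of choices, and glue with \cite[Theorem 1.0.4]{chowdhury2023sixfunctorformalismsi}. So is your local observation that $X'\to X''$ lies in the admissible class but not necessarily in $\E_2$, and can therefore be recorded in only one direction, with identities in the other. The gap is the step that turns a cartesian refinement into an $n$-simplex of $\D$; steps 1 and 3 do not supply it. The gluing lemma needs, for each $n$-simplex $\sigma$, a weakly contractible simplicial set with a map to $\op{Fun}(\Delta^n,\D)$, natural in $\sigma$. Your $\Cart(\sigma)$, equipped with maps to $\dd^*_2\Ca^{Q,\op{cart}}_{\E_1,\E_2}$ and to $\Delta^n$, conflates a parameter space with a diagram shape, and neither reading supports step 1.

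\begin{itemize}
\item \emph{As a parameter space} it is essentially the paper's $\Krt(\sigma)$, the right Kan extensions of $\sigma$ along $\varsigma^n\colon\Delta^n\times\Delta^n\to\Cart^n$. It is contractible, but its points are diagrams $\Cart^n\to\Ca$, not multisimplices of $\dd^*_2\Ca^{Q,\op{cart}}_{\E_1,\E_2}$, so $g_{\op{cart}}$ cannot be applied to them.
\item \emph{As a shape} it cannot map as a whole into the cartesian nerve. The simplex you must ultimately evaluate, the diagonal $\Delta^n\to\Cart^n$, goes to squares lying only in $R$; for $n=1$ it goes to $\sigma$ itself. Only a sub-simplicial set maps into $\dd^*_2\Ca^{Q,\op{cart}}_{\E_1,\E_2}$. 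What the shape then needs is an inner anodyne extension property, not weak contractibility.
\end{itemize}

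The missing ingredient is the explicit grid-valued map $\epsilon^{\S}_n\colon\Cart^n\to\dd^*_2\dd^{2+}_*(\Cart^n,\F_{\S})$. It sends a simplex $x_0\le\cdots\le x_m$ to the grid with entries $\Lambda^n_{\alpha_{x_a}}(x_b,x_a)$ for $a\ge b$ and $\mu^n_n(x_a,x_b)$ for $a\le b$, and its squares over $\boxplus^n_{\op{cart}}$ are exact. Composing with any $\tau'\in\Krt(\sigma)$, which sends exact squares to pullbacks, lands $\boxplus^n_{\op{cart}}$ in $\dd^*_2\Ca^{Q,\op{cart}}_{\E_1,\E_2}$. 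One then applies $g_{\op{cart}}$, extends along the inner anodyne inclusion $\boxplus^n_{\op{cart}}\hookrightarrow\Cart^n$, and restricts to the diagonal. The input to the gluing lemma is therefore $\N(\sigma)=\Krt(\sigma)\times_{\op{Fun}(\boxplus^n_{\op{cart}},\D)}\op{Fun}(\Cart^n,\D)$.

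Non-admissibility of $\E_2$ enters exactly here. The edges $x\to y$ with $x\ne y$ and $\pi^n(x)=\pi^n(y)$ are removed from one marking of $\Cart^n$. The identity $\mu^n_n(x,y)=y$ then guarantees that $\epsilon^{\S}_n$ sends them to squares with identity legs in that direction; this is the precise form of your ``identity-type edge''. Consequently the obstacle you single out is misplaced. Contractibility of the space of choices is just uniqueness of right Kan extensions and does not depend on admissibility. The inner horn fillings you propose for step 2 are really needed for $\boxplus^n_{\op{cart}}\subset\Cart^n$.
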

\begin{remark}
The above theorem does not use the need of truncation condition on the classes of maps in $\E_1 \cap \E_2$ and also removes the admissibility condition of $\E_2$ (which was in \cite[Thm B]{chowdhury2025sixfunctorformalismsii}). Thus, this leads to revisiting the proof of \cite[Thm B]{chowdhury2025sixfunctorformalismsii} and making the necessary changes needed to circumvent the conditions to prove the theorem.
\end{remark}
The proof of the theorem really follows the same idea as \cite[ThmB]{chowdhury2025sixfunctorformalismsii}. Let us recall the key notations and definitions.

\begin{definition}
    \begin{enumerate}
        \item 	Let $P$ be a partially ordered set. $Q \subset P$ is said to be an \textit{up-set} if for every $q \in Q$ and $ p \ge q $ in $P$ implies $ p \in Q$. We shall denote the category of up-sets of $P$ by $\U(P)$. It is a partially ordered set where the ordering is given by inverse inclusion. 
        \item 	For any partially ordered set, we denote the products (infima) by $ \wedge$ and coproducts (suprema) by $\vee$.  In $\U(P)$, we have $ Q \wedge Q' = Q \cup Q'$ and $ Q \vee Q' = Q \cap Q'$. There is a canonical order preserving map $\varsigma^P: P \to \U(P)$ defined by $p \to P_{p/}$. 
        \item A square in a partially ordered set is an \textit{exact square} if it is both pushout and a pullback square.
        \item Consider $[n] \times [n]$. We shall denoted the partially ordered set of non-empty up-sets of $[n] \times [n]$ by $\Crt^n$. We denote  $\varsigma^n:= \varsigma^{[n] \times [n]}: [n] \times [n] \to \Crt^n$ to be the usual map sending $(p,q) \to ([n] \times [n])_{(p,q)/}$. Let $\Cart^n:= N(\Crt^n)$ and $\varsigma^n: \Delta^n \times \Delta^n \to \Cart^n$ be the map induced from $\varsigma^n$.  
        \item Let $\Ca$ be an $\infty$-category. Let $\tau: \Delta^n \times \Delta^n \to  \Ca$ be a map. We define the simplicial set $\Krt(\tau)$ which is defined as the pullback of the diagram:
		
		\begin{center}
			\begin{tikzcd}
				{} & \Krt(\tau)_{\op{RKE}} \arrow[d] \\
				\Delta^0 \arrow[r,"\tau"] & \op{Fun}(\Delta^n \times \Delta^n, \Ca) 
			\end{tikzcd}
		\end{center}	
		where $\Krt(\tau)_{\op{RKE}}$ is the sub-simplicial set of $\op{Fun}(\Cart^n,\Ca)$ which are right Kan extensions along $\varsigma^n$. 
    \end{enumerate}
\end{definition}

\begin{example}
    	 The diagram of $\Cart^1$ is as follows:
			\begin{center}
				\begin{tikzcd}
					b_{00} \arrow[dr] & {} & {} \\
					{} & P \arrow[r] \arrow[d] & b_{01} \arrow[d] \\
					{} & b_{10} \arrow[r] & b_{11}
				\end{tikzcd}
			\end{center}
			Here $b_{ij}:= ([1] \times [1])_{(i,j)/}$ and $P = b_{01} \wedge b_{10}$. 
\end{example}
\begin{proposition}
    \begin{enumerate}
        \item  Every morphisms $ Q \to Q'$ in $\U(P)$ is a composition of finite sequence of exact pullbacks of morphisms $ \sigma^P(x) \to \sigma^P(x)-x$ where $x \in Q -Q'$.
        \item 	Let $\Ca$ be an $\infty$-category and $F: N(\U(P)) \to \Ca$ be a functor. Then if $F$ is a right Kan extension along $\varsigma^P$, it sends exact squares to pullback squares. 
        \item If $\Ca$ admits pullbacks, the simplicial set $\Krt(\tau)$ is a contractible Kan complex. 

    \end{enumerate}
\end{proposition}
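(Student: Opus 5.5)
The three parts are best proved in order, since (2) uses the decomposition from (1) and (3) uses (2). Throughout I take the up-sets to be non-empty, as in the definition of $\Crt^n$. In $\U(P)$ the order is reverse inclusion, so a morphism $Q \to Q'$ means $Q \supseteq Q'$.

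\textbf{Part (1).} I argue by induction on the cardinality of $Q - Q'$ (finite $P$ suffices for the application to $[n]\times[n]$). If $Q=Q'$ there is nothing to prove. Otherwise, choose $x \in Q - Q'$ minimal in $Q$, i.e.\ no $y<x$ lies in $Q$.

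First, $Q_1 := Q - \{x\}$ is still an up-set: if $q\in Q_1$ and $p \ge q$, then $p\in Q$, and $p\neq x$ by minimality of $x$. So it suffices to show that $Q \to Q_1$ is an exact pullback of $\varsigma^P(x) \to \varsigma^P(x) - x$, and then apply induction to $Q_1 \to Q'$.

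Write $P_{x/} = \varsigma^P(x)$. Since $P_{x/}\subseteq Q$, we have
\[ Q = Q_1 \cup P_{x/}, \qquad Q_1 \cap P_{x/} = P_{x/} - \{x\}. \]
Translate this using $\wedge=\cup$ and $\vee=\cap$. Then $Q = Q_1 \wedge P_{x/}$ and $P_{x/}-\{x\} = Q_1 \vee P_{x/}$, which is itself an up-set. The square
\[ Q,\quad Q_1,\quad P_{x/},\quad P_{x/}-\{x\} \]
is therefore a product square and a coproduct square at once, hence exact.

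\textbf{Part (2).} Let $F$ be the right Kan extension of $F\circ\varsigma^P$. For an up-set $Q$, the comma category $\{p : Q \to \varsigma^P(p)\}$ is the full subposet $Q\subseteq P$, since $Q \supseteq P_{p/}$ iff $p\in Q$. Hence
\[ F(Q) \simeq \lim_{p\in Q} F\varsigma^P(p). \]
Now take an exact square $A\cup B,\ A,\ B,\ A\cap B$. The key step is the pasting lemma
\[ \lim_{A\cup B} \simeq \lim_A \times_{\lim_{A\cap B}} \lim_B, \]
which holds because $N(A\cup B)$ is the pushout $N(A)\cup_{N(A\cap B)}N(B)$ of simplicial sets. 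To check this, note that each simplex of $A\cup B$ is a chain $p_0<\dots<p_k$. If $p_0\in A$, the whole chain lies in $A$ since $A$ is an up-set; otherwise $p_0 \in B$ and the chain lies in $B$. This pushout is along monomorphisms, so it is a homotopy pushout, and limits over it are the pullback of the limits. Consequently $F$ sends the exact square to a pullback square.

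\textbf{Part (3).} View $\Krt(\tau)_{\op{RKE}}\to \op{Fun}(\Delta^n\times\Delta^n,\Ca)$ as restriction along $\varsigma^n$. Restricted to the full subcategory of right Kan extensions, it is a trivial fibration whenever the relevant right Kan extensions exist (HTT 4.3.2.15). Here they do exist: each comma category is a finite poset, and every finite limit of this shape is built from iterated pullbacks, e.g.\ by inducting via (1) and (2). This is where the hypothesis that $\Ca$ admits pullbacks enters, and it is the step needing the most care, since terminal objects are not assumed.

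Why pullbacks alone suffice: the non-empty up-sets of $[n]\times[n]$ all have limits computable by pullbacks. The generators $P_{x/}$ have initial object $x$, so their limit is just the value at $x$. Every other non-empty up-set is reached from these by the exact squares of (1), i.e.\ by forming pullbacks.

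The fiber of a trivial fibration over the point $\tau$ is a contractible Kan complex, which gives (3).
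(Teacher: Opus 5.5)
Your argument is correct and is essentially the standard proof from the sources this proposition is recalled from. The paper itself states it without proof, deferring to Liu--Zheng and \cite{chowdhury2025sixfunctorformalismsii}. Your route is the same one used there: peel off a minimal element of $Q-Q'$ for (1), paste limits along $N(A\cup B)=N(A)\cup_{N(A\cap B)}N(B)$ for (2), and for (3) get existence of the right Kan extensions by induction and then apply HTT 4.3.2.15.

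Two small points deserve a line each. First, a minimal element of $Q-Q'$ is automatically minimal in $Q$, because $Q'$ is an up-set. Second, in (3) the induction should be on $|Q|$, using that $P_{x/}-\{x\}$ is a strictly smaller non-empty up-set. It is non-empty because $(n,n)$ is the maximum of $[n]\times[n]$, and this is exactly why no terminal object is needed.
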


\begin{notation}
    \begin{enumerate}
			\item We have a map:\[\pi^n : \Crt^n \to [n] \times [n] \] defined as: 
			\[ \pi^n(P):= (\op{min}_{(p,q) \in P}p,\op{min}_{(p,q)\in P}q). \]
			\item $\pi^n \circ \varsigma^n = \op{id}_{[n]\times[n]}$.
			\item Other than $\varsigma^n$, we have two maps: \[ \xi^n,\eta^n: [n] \times [n] \to \Crt^n \] defined by 
			\[\xi^n(p,q):= \varsigma^n(p,0)\wedge \varsigma^n(0,q) ; \eta^n(p,q):= \varsigma^n(p,n)\wedge\varsigma^n(n,q).\]
			\item For $(p,q) \in [n] \times [n]$, we denote \[ \boxplus^n_{(p,q)}:= N(\Crt^n_{\xi^n(p,q)//\eta^n(p,q)}) \]
			\item Denote 
			\[ \boxplus^n:= \cup_{(p,q)\in [n] \times [n]} \boxplus^n_{(p,q)}. \]
			\item  For two elements $x\le y \in \op{Cart}^n$ and for $p,q \in [n]$, we define two elements:
         \begin{equation}
             \Lambda_p^n(x,y) := (\sigma^n(\pi^n_1(y) \vee p,0) \vee x) \wedge y \quad ; \quad \mu_q^n(x,y) := (\sigma^n(0,q \vee \pi^n_2(y)) \vee x) \wedge y.
         \end{equation}

		\end{enumerate}
\end{notation}

\begin{notation}
	Consider the bi-marked simplicial set $(\Delta^n \times \Delta^n, \F'_1:=(\epsilon^2_1 \Delta^{n,n})_1, \F'_2:=(\epsilon^2_2 \Delta^{n,n})_1)$. Let $(\Cart^n, \F_{\S})$ be the marked-simplicial set  $(\Cart^n, \F_{1,\S}:=(\pi^n)^{-1}(\F'_1)/X, \F_2:= (\pi^n)^{-1}(\F'_2))$. Here $X$ is the set of edges $f: x \to y , x \neq y$ such that $\pi^n(x) =\pi^n(y)$ \\
	We define $(\Cart^n,\F^{\op{exact}}_{\S})$ to the $2$-tiled simplicial set where the $2$-tiling is given by $\F_{12}:=\F_{1,\S} \star^{\op{cart}} \F_2$ as exact squares.
  
	\end{notation}

\begin{lemma}
    \begin{enumerate}
        \item $\Lambda_p^n(x,x)=x = \mu_q^n(x,x)$.
        \item $\pi^n(\Lambda^n_p(x,y))= (\pi^n_1(y),\pi^n_2(x))$ and $\pi^n(\mu^n_q(x,y)) = (\pi^n_1(x),\pi^n_2(y))$.
        \item The inclusion 
        \begin{equation}
        \gamma : \boxplus^n_{\op{cart}}:=\bigcup_{0 \le p \le n} \boxplus^n_{(p,n)} \hookrightarrow \Cart^n 
        \end{equation}
        is an inner anodyne.
    \end{enumerate}
\end{lemma}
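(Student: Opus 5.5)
Parts (1) and (2) are direct computations from the definitions of $\Lambda^n_p$, $\mu^n_q$ and $\pi^n$. Part (3) is the substantive one: I would write $\Cart^n$ as $\boxplus^n_{\op{cart}}$ with finitely many simplices attached, and check that each attachment is a pushout of an inner horn inclusion.

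\textbf{Parts (1) and (2).} For (1), put $y=x$. Then $\Lambda^n_p(x,x) = (\varsigma^n(\pi^n_1(x)\vee p,0)\vee x)\wedge x$. In the lattice $\Crt^n$, ordered by reverse inclusion, $(a\vee x)\wedge x = x$ for any $a$, by the absorption law, since $a \vee x \ge x$. The same argument gives $\mu^n_q(x,x)=x$.

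For (2), I would use that $\pi^n$ sends $\wedge$ (union of up-sets) to the coordinatewise minimum. I would also use that $\pi^n(\varsigma^n(a,b)\vee x)$ is the coordinatewise maximum, because the intersection of the up-sets $([n]\times[n])_{(a,b)/}$ and $x$ has minimal coordinates $\max$-ed. Then
\[
\pi^n_1(\Lambda^n_p(x,y)) = \min\bigl(\max(\pi^n_1(y)\vee p,\pi^n_1(x)),\pi^n_1(y)\bigr).
\]
Since $x\le y$ gives $\pi^n_1(x)\le \pi^n_1(y)$, this expression equals $\pi^n_1(y)$. For the second coordinate, $\min(\max(0,\pi^n_2(x)),\pi^n_2(y))=\pi^n_2(x)$. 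The formula for $\mu^n_q$ follows symmetrically.

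One caveat: the minimum-coordinate description of $\pi^n$ on intersections needs care, since $\pi^n$ records componentwise minima that need not be attained by a single point. I would verify it with the explicit description of up-sets of $[n]\times[n]$ as staircases.

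\textbf{Part (3).} I would follow the proof of the analogous statement in \cite[Section 5]{chowdhury2025sixfunctorformalismsii}, with $\Lambda^n_p$ and $\mu^n_q$ serving as the retraction data. The steps are:
\begin{enumerate}
\item Filter $\Cart^n$ by nondegenerate simplices $x_0<\dots<x_k$ not contained in $\boxplus^n_{\op{cart}}$.
\item Order these simplices lexicographically by the invariant $(\pi^n_1(x_0),\pi^n_2(x_k))$ and by dimension.
\item For each such simplex $\sigma$, insert the vertex $\Lambda^n_p(x_i,x_{i+1})$ with a suitable $p$. By (2) this vertex lies strictly between $x_i$ and $x_{i+1}$ in the required sense, so it yields a simplex $\sigma'$ having $\sigma$ as an inner face.
\item Show that the remaining faces of $\sigma'$ are either earlier in the order or already lie in $\boxplus^n_{(p,n)}$.
\end{enumerate}
Step 4 uses the fact that $\Crt^n_{\xi^n(p,n)//\eta^n(p,n)}$ consists exactly of the up-sets sandwiched in the $p$-th column strip. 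This exhibits $\boxplus^n_{\op{cart}}\hookrightarrow\Cart^n$ as a transfinite composite of pushouts of inner horns, hence inner anodyne.

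\textbf{Main obstacle.} The hard step is the combinatorial bookkeeping that the inserted vertex is never the first or last vertex, so that the horns are inner, and that the pairing of simplices is well defined and without cycles. First I would reduce to the case $\boxplus^n_{\op{cart}}\subset\boxplus^n$ together with the known inner-anodyne result for $\boxplus^n\hookrightarrow\Cart^n$. Then I would only treat the extra columns $q<n$, inducting on $q$ downward and using the $\mu^n_q$ operation in that induction.
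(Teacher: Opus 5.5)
Your parts (1) and (2) are correct. Part (1) is absorption in the lattice $\Crt^n$. In (2) your caveat is harmless: every nonempty up-set $x$ contains $(\pi^n_1(x),n)$ and $(n,\pi^n_2(x))$, so $\pi^n$ sends $\vee$ (intersection) to the coordinatewise maximum and $\wedge$ (union) to the coordinatewise minimum. You also correctly use $x\le y$.

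The paper gives no proof of this lemma, so everything rests on your (3), and there the outline has a genuine gap. The fact you are missing is that membership in $\boxplus^n_{\op{cart}}$ depends only on the endpoints of a chain. Since $\eta^n(p,n)=\varsigma^n(p,n)$, a chain $x_0<\dots<x_k$ lies in $\boxplus^n_{(p,n)}$ if and only if $x_0\subseteq\xi^n(p,n)=\{a\ge p\}\cup\{b=n\}$ and $(p,n)\in x_k$. Hence it lies in $\boxplus^n_{\op{cart}}$ if and only if $\pi^n_1(x_k)\le\rho(x_0)$. Here $\rho(x_0)$ is the least first coordinate of a point of $x_0$ with second coordinate $<n$, and $\rho(x_0)=n$ if there is no such point.

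Now suppose you insert $v$ between $x_i$ and $x_{i+1}$, giving $\sigma'$ with $\sigma=d_{i+1}\sigma'$. Every other inner face of $\sigma'$ drops some $x_l$ with $0<l<k$. Such a face has the same endpoints $x_0,x_k$, so it is not in $\boxplus^n_{\op{cart}}$. It also has the same dimension and the same invariant $(\pi^n_1(x_0),\pi^n_2(x_k))$ as $\sigma$. Your Step 2 ordering therefore cannot place it before $\sigma$, and Step 4 fails as soon as $k\ge 2$. For example, with $\sigma=(x_0<x_1<x_2)$ and $\sigma'=(x_0<x_1<v<x_2)$, the face $x_0<v<x_2$ is absent when you try to fill the horn. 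A valid pairing needs an inserted vertex determined by data common to all these faces and comparable with all their vertices.

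Moreover, $\Lambda^n_p$ is not the right vertex to insert. By your own (2), $\pi^n_1(\Lambda^n_p(x_i,x_{i+1}))=\pi^n_1(x_{i+1})$ for every $p$. Also $\Lambda^n_p(x_i,x_{i+1})=x_i$ whenever $\pi^n_1(x_i)=\pi^n_1(x_{i+1})$ and $p\le\pi^n_1(x_{i+1})$, so ``strictly between'' is false in general.

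The operation that splits a bad edge $x<y$ into two edges of $\boxplus^n_{\op{cart}}$ is $\mu^n_n(x,y)=(\varsigma^n(0,n)\vee x)\wedge y$. This is the set of points of $x$ with second coordinate $n$, together with $y$. Its $\pi^n_1$ equals $\pi^n_1(x)$ and its $\rho$ equals $\rho(y)$. It is also the operation behind $\epsilon^{\S}_n$ and \cref{Lambdamulemmaequality}. But even this vertex cannot simply be coned in. For $n=1$ and the bad simplex $b_{00}<b_{10}<b_{11}$, one gets $\mu^1_1(b_{00},b_{11})=b_{01}$, which is incomparable with $b_{10}$.

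One way to organize a real proof is to attach, for bad pairs $x<y$ in order of increasing $|x\setminus y|$, all simplices with endpoints exactly $x$ and $y$. This reduces (3) to showing that $(\{x\}\star N(Q))\cup(N(Q)\star\{y\})\subseteq\{x\}\star N(Q)\star\{y\}$ is inner anodyne for the open interval $Q=(x,y)$ of $\Crt^n$. Such $Q$ in general has neither a least nor a greatest element, so this still needs a genuine combinatorial argument.

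Your fallback of composing with the inner anodyne map $\boxplus^n\subseteq\Cart^n$ is legitimate in principle. However, $\boxplus^n_{\op{cart}}\subseteq\boxplus^n$ is a statement of exactly the same kind, and ``induct downward on $q$ using $\mu^n_q$'' supplies no mechanism for it. As written, (3) is asserted rather than proved.
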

\begin{notation}
    As the vertices of $\boxplus^n_{\op{cart}}$ are same as $\op{Cart}^n$, to every object $x \in \op{Cart}^n$, define
    \begin{equation}
        \alpha_x = \op{min}_{ x \in \boxplus^n_{(p,n)}} p
    \end{equation}
\end{notation}
\begin{lemma}\label{Lambdamulemmaequality}
    For $x\le y \in \Cart^n$ and $\pi^n(x)=\pi^n(y)$, 
    then $\mu^n_n(x,y)=y$.
\end{lemma}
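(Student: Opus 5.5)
The statement is a direct computation from the definitions, so the plan is to unwind $\mu^n_n$ as set operations on up-sets and check one inclusion. Recall that $\U([n]\times[n])$ is ordered by reverse inclusion, so $\vee$ is intersection, $\wedge$ is union, and $x\le y$ means $y\subseteq x$.

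First I simplify the formula. Since $q=n$, we have $q\vee\pi^n_2(y)=n$, so
\begin{equation*}
\mu^n_n(x,y)=\bigl(\varsigma^n(0,n)\cap x\bigr)\cup y .
\end{equation*}
The up-set $\varsigma^n(0,n)=([n]\times[n])_{(0,n)/}$ is exactly the set $\{(p',n): 0\le p'\le n\}$ of points with second coordinate $n$.

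It therefore suffices to prove the inclusion $\varsigma^n(0,n)\cap x\subseteq y$; the union with $y$ is then $y$ itself. Write $\pi^n(x)=\pi^n(y)=(a,b)$, and take an element $(p',n)\in x$.

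\begin{itemize}
\item Since $a=\min_{(p,q)\in x}p$, we get $p'\ge a$.
\item Since $\pi^n_1(y)=a$, there is some $(a,s)\in y$, and trivially $s\le n$.
\item Hence $(p',n)\ge(a,s)$ componentwise. Because $y$ is an up-set, $(p',n)\in y$.
\end{itemize}

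This gives the inclusion, and so $\mu^n_n(x,y)=y$. Only the equality of first coordinates, $\pi^n_1(x)=\pi^n_1(y)$, is actually used; the second-coordinate hypothesis is not needed. I expect no real obstacle beyond keeping the reversed-order conventions for $\vee$ and $\wedge$ straight.
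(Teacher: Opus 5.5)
Your proposal is correct and is essentially the paper's own argument: both reduce the claim to $\varsigma^n(0,n)\vee x \ge y$, i.e.\ $\varsigma^n(0,n)\cap x\subseteq y$. Both prove this by taking $(a,n)$ in that set, using $\pi^n_1(y)=\pi^n_1(x)\le a$ to produce $(\pi^n_1(y),d)\in y$, and concluding $(a,n)\in y$ because $y$ is an up-set. Your remark that only the first-coordinate equality is needed is also accurate; the paper's proof likewise uses only that.
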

\begin{proof}
We need to show that
\begin{equation}
    (\sigma^n(0,n) \vee x) \wedge y = y
\end{equation}
Let $(a,n) \in \sigma^n(0,n) \vee x$, then $\pi^n_1(y) = \pi^n_1(x) \le a$. By definition of $\pi^n$, it follows that there exists an element $(\pi^n_1(y),d) \in y$. As $d \le n$. Then we have 
\begin{equation}
    (\pi^n_1(y),d) \le (a,n)
\end{equation}
As $y$ is an up-set, this implies $(a,n) \in y$. Thus we have $\sigma^n(0,n) \vee x \ge y$. This proves that $(\sigma^n(0,n) \vee x) \wedge y = y$.
\end{proof}
\begin{lemma}
    Let 
    \begin{equation}
        \begin{tikzcd}
            x \arrow[r] \arrow[d] & \mu^n_q(x,y) \arrow[d] \\
            \Lambda^n_r(x,y) \arrow[r] & y
        \end{tikzcd}
    \end{equation}
    be a square in $\op{Cart}^n$ where $ x\le y \in \boxplus^n_{(p,n)}$. Then
    \begin{enumerate}
        \item If $r \le p$, the square is a pullback square.
        \item If $ r= p , q =n$, the square is an exact square.
    \end{enumerate}
    
\end{lemma}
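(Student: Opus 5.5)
The plan is to work entirely inside the poset $\Crt^n$ of non-empty up-sets of $[n]\times[n]$, ordered by reverse inclusion. Here $\wedge$ is union, $\vee$ is intersection, and $x\le y$ means $x\supseteq y$. In this poset a commutative square $x\to \mu\to y$, $x\to\Lambda\to y$ is a pullback iff $x=\Lambda\wedge\mu=\Lambda\cup\mu$, and a pushout iff $y=\Lambda\vee\mu=\Lambda\cap\mu$.

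First I make the two corners explicit. Write $a_0:=\pi^n_1(y)\vee r=\max(\pi^n_1(y),r)$ and $b_0:=\max(q,\pi^n_2(y))$. Unwinding the definitions gives
\begin{equation}
\Lambda^n_r(x,y)=y\cup\bigl(x\cap\{(a,b):a\ge a_0\}\bigr),\qquad \mu^n_q(x,y)=y\cup\bigl(x\cap\{(a,b):b\ge b_0\}\bigr).
\end{equation}
Both lie between $y$ and $x$, so the square exists and commutes.

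Next I translate the hypothesis $x\le y\in\boxplus^n_{(p,n)}=N(\Crt^n_{\xi^n(p,n)//\eta^n(p,n)})$ into set-theoretic bounds. Since $\eta^n(p,n)=\varsigma^n(p,n)\wedge\varsigma^n(n,n)=\varsigma^n(p,n)$, both $x$ and $y$ contain $\{(a,n):a\ge p\}$. Since $\xi^n(p,n)=\varsigma^n(p,0)\wedge\varsigma^n(0,n)$, both are contained in $\{a\ge p\}\cup\{b=n\}$. In particular $(p,n)\in y$, so $\pi^n_1(y)\le p$. This inequality is the key point.

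For the pullback claim (1), suppose $r\le p$. Then $a_0=\max(\pi^n_1(y),r)\le p$. I need $x\subseteq\Lambda\cup\mu$. Take $(a,b)\in x$. If $b=n$, then $b\ge b_0$ and $(a,b)\in\mu$. If $b<n$, the upper bound on $x$ forces $a\ge p\ge a_0$, so $(a,b)\in\Lambda$. Hence $x=\Lambda\cup\mu$, which is the pullback in $\Crt^n$.

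For exactness (2), take $r=p$ and $q=n$. The square is a pullback by (1), so it remains to show it is a pushout. Now $a_0=p$ (using $\pi^n_1(y)\le p$) and $b_0=n$, so
\begin{equation}
\Lambda\cap\mu=y\cup\bigl(x\cap\{(a,n):a\ge p\}\bigr).
\end{equation}
The second set is contained in $\{(a,n):a\ge p\}\subseteq y$, so $\Lambda\cap\mu=y$ and the square is a pushout as well.

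The only step I expect to need care is the correct unpacking of $\xi^n(p,n)$ and $\eta^n(p,n)$ into the inclusions above, in particular getting the reverse-inclusion conventions right. The rest is elementwise checking.
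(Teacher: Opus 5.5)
Your proof is correct and follows essentially the same route as the paper. Both arguments reduce to checking $\Lambda^n_r(x,y)\wedge\mu^n_q(x,y)=x$ and $\Lambda^n_p(x,y)\vee\mu^n_n(x,y)=y$ in $\Crt^n$, using $\pi^n_1(y)\le p$ (from $y\le\eta^n(p,n)=\varsigma^n(p,n)$) and the interval bounds $\xi^n(p,n)\le x\le y\le\varsigma^n(p,n)$. The only difference is that you verify these identities elementwise with unions and intersections, whereas the paper manipulates $\wedge,\vee$ formally; your version also states explicitly the bound $x\subseteq\{a\ge p\}\cup\{b=n\}$, which the paper's chain of inequalities uses only implicitly.
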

\begin{proof}
\begin{enumerate}
    \item  We need to show $\Lambda^n_r(x,y) \wedge \mu^n_q(x,y)$ is $x$. By the property of pullback, we have $\Lambda^n_p(x,y) \wedge \mu^n_q(x,y) \ge x$. As $y \le \eta^n(p,n) =\varsigma^n(p,n)$, we have $\pi^n(y) \le (p,n)$. Thus $\sigma^n(r',q') \le x$ for $r' \le p$ and for all $q'$ ( as $q' \le n$). The pullback follows from the following chain of inequalities:
    \begin{align*}
        \Lambda^n_p(x,y) \wedge \mu^n_q(x,y)\\ = ((\varsigma^n(\op{max}(r,\pi^n_1(y)),0) \vee x) \wedge y) \wedge ((\varsigma^n(0,\op{max}(q,\pi^n_2(y))) \vee x) \wedge y) \\= (\varsigma^n(\op{max}(r,\pi^n_1(y)),0) \wedge \varsigma^n(0,\op{max}(q,\pi^n_2(y)))) \vee x \wedge y \\ \le x \vee x \wedge y = x
    \end{align*}
    \item By the previous point, the square is already a pullback square, we just need to show that it is a pushout square. Let $m = \op{max}(\pi^n_1(y),p)$. We have the following chain of inequalities. 
   \begin{align*}
       \Lambda^n_p(x,y) \vee \mu^n_n(x,y) \\= (\sigma^n(m,0) \vee \sigma^n(0,n)) \vee x \wedge y \\ = (\sigma^n(m,n) \vee x ) \wedge y  \\ \ge \sigma^n(p,n) \vee x \wedge y \\ \ge y \vee x \wedge y= y
   \end{align*}
   By pushout property, it follows that $\Lambda^n_p(x,y) \vee \mu^n_n(x,y) \le y$. Thus, combining, we get the equality proving the exactness of the square.
   \end{enumerate} 
\end{proof}

The main proposition is the following:
\begin{proposition}\label{Proposition: Key Prop for pcart non adm}
 There exists a  morphism 
\begin{equation}
    \epsilon_n^{\S} : \Cart^n \to \dd^*_2\dd^{2+}_*(\Cart^n,\F_{\S})
\end{equation}
such that :
\begin{enumerate}
    \item  $\epsilon_n^{\S}(f)$ for an edge $f: x \to y \in \boxplus^n_{\op{cart}}$ for $\pi^n(x)=\pi^n(y)$ is the exact square
    \begin{equation}
      \begin{tikzcd}
          x \arrow[d,"\op{id}"] \arrow[r,"f"] & x \arrow[d,"\op{id}"] \\
          y \arrow[r,"f"] & y
      \end{tikzcd}
      \end{equation}
      \item $\epsilon_n^{\S}$ restricts to a morphism :
      \begin{equation}
        \epsilon^{\op{cart},\S}_n: \boxplus^n_{\op{cart}} \to \dd^*_2\dd^{2\bx}_*(\Cart^n,\F^{\op{exact}}_{\S})
      \end{equation}
     which makes the following diagram commute
      \begin{equation}
          \begin{tikzcd}
               \boxplus^n_{\op{cart}}\arrow[r,"\epsilon_n^{\op{cart},\S}"] \arrow[d,hookrightarrow] \arrow[d,"\gamma"] & \dd^*_2\dd^{2\bx_*}(\Cart^n,\F_{\S}^{\op{exact}}) \arrow[d,hookrightarrow,"j"] \arrow[r,"\tau^n(\tau')"]& \dd^*_2\Ca^{Q,\op{cart}}_{\E_1,\E_2} \arrow[d,"p_{\op{cart}}"] \\
        \Cart^n \arrow[r,"\epsilon_n^{\S}"] & \dd^2_*\dd^{2+*}(\Cart^n,\F_{\S}) \arrow[r,"\tau^n(\tau')"] & \dd^*_2\Ca^R_{\E_1,\E_2}
          \end{tikzcd}
         \end{equation}
         Here $\tau$ is an $n$-simplex of $\dd^*_2\Ca_{\E_1,\E_2}$, $\tau'$ is an element of $\Krt(\tau)$ and morphisms are defined as in \cite[Proposition 5.1.15]{chowdhury2025sixfunctorformalismsii}.
         \item Moreover,  we also have the following commutative diagram:
         \begin{equation}
    \begin{tikzcd}
        \Krt(\tau) \arrow[r," \tau^n"] \arrow[dr,"\tau^n"] & \op{Fun}( \delta^*_2\delta^{2\bx}_*(\Cart^n,\F_{\S}^{\op{cart}}),\dd^*_2\Ca^{Q,\op{cart}}_{\E_1,\E_2}) \arrow[d,hookrightarrow]\\
        {} & \op{Fun}(\dd^*_2\dd^{2+}_*(\Cart^n,\F_{\S}),\dd^*_2\Ca^R_{\E_1,\E_2})
    \end{tikzcd}
\end{equation}
         
\end{enumerate}

\end{proposition}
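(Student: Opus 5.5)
We construct $\epsilon_n^{\S}$ as in \cite[Proposition 5.1.15]{chowdhury2025sixfunctorformalismsii}, using the operations $\Lambda^n_p$ and $\mu^n_q$ to split each simplex of $\Cart^n$ into a square. Since $\dd^*_2\dd^{2+}_*(\Cart^n,\F_{\S})$ is the diagonal of a bisimplicial set, a $k$-simplex $x_0\le \dots \le x_k$ of $\Cart^n$ must go to a map $\Delta^k\times\Delta^k \to \Cart^n$. We send $(i,j)$ to $\Lambda^n_{p}(x_i,x_j)$ when $i\le j$, with $p$ chosen uniformly; the natural choice is $p=\alpha_{x_j}$, or $p=0$ if functoriality fails otherwise. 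The lower triangle is defined symmetrically using $\mu^n_n$, and the diagonal goes to $x_i$. Functoriality then follows from the monotonicity of $\wedge$, $\vee$ and $\sigma^n$.

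Next we check the markings. The identity $\pi^n(\Lambda^n_p(x,y))=(\pi^n_1(y),\pi^n_2(x))$ shows that horizontal edges only change the first coordinate of $\pi^n$, so they lie in $(\pi^n)^{-1}(\F'_1)$, and vertical edges similarly lie in $\F_2$. We must also make sure that edges with $\pi^n(x)=\pi^n(y)$, which were removed from $\F_{1,\S}$, are placed only in the second direction. Here the earlier lemma does the work: if $\pi^n(x)=\pi^n(y)$, then $\mu^n_n(x,y)=y$, and by $\Lambda^n_p(x,x)=x$ the image of $f$ is the square with horizontal identities $x\to x$, $y\to y$ and vertical edges $f$. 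Up to the orientation convention, this is the square displayed in property (1), so that property holds by construction.

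For property (2), take a simplex in $\boxplus^n_{\op{cart}}$, so all $x_i$ lie in a single $\boxplus^n_{(p,n)}$ with $p=\alpha$. The lemma on the square $x\to\mu^n_q(x,y)$, $\Lambda^n_r(x,y)\to y$ shows that with $r=p$ and $q=n$ every elementary square is exact. The image therefore lies in the tiled subobject $\dd^*_2\dd^{2\bx}_*(\Cart^n,\F_{\S}^{\op{exact}})$, and the left square commutes because $\epsilon^{\op{cart},\S}_n$ is defined as the restriction. For the right square and for (3), an element $\tau'\in\Krt(\tau)$ is a right Kan extension, so by the earlier proposition it sends exact squares to pullback squares. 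Composing with $\tau'$ therefore sends exact tiles to pullback squares built from $\E_1$ and $\E_2$, which lie in $Q$ because $\tau$ is a simplex of the $Q$-tiled source. General squares in the image factor through $P=\Lambda\wedge\mu$, so the part that is a pullback lies in $Q$ and the whole square lies in $R$. The map $\tau^n$ is defined functorially in $\tau'$, giving the diagram in (3) as an inclusion of mapping spaces.

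The main obstacle is choosing $p$ in the first step so that two requirements hold at once. The construction must be simplicial, meaning compatible with faces and degeneracies, although $\alpha_{x_j}$ varies along a simplex. On $\boxplus^n_{\op{cart}}$ it must also produce only exact squares. If the choice $p=\alpha_{x_j}$ fails to be simplicial, the plan is to take $p=0$, which gives pullbacks everywhere by part (1) of the lemma. Exactness on $\boxplus^n_{\op{cart}}$ would then be recovered by using \cref{Lambdamulemmaequality} to verify that $\Lambda^n_0$ and $\Lambda^n_p$ agree on the relevant pairs. Because $\E_2$ is not assumed admissible, we also need the removed edge set $X$ to be exactly what makes the marking of $\Lambda$-edges consistent, and this check replaces the truncation argument used in the admissible case.
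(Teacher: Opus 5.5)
Your construction is the paper's up to transposing the two coordinates of $\Delta^m\times\Delta^m$. On one triangle you put $\Lambda^n_{\alpha}$, indexed by the larger vertex, and on the other $\mu^n_n$; then you read off (1) from \cref{Lambdamulemmaequality}, get exactness on $\boxplus^n_{\op{cart}}$ from the lemma on the $\Lambda/\mu$ square, and get (2)--(3) from the fact that elements of $\Krt(\tau)$ send exact squares to pullbacks, so the approach is essentially the same. The ``main obstacle'' you flag is not one: since $\alpha_x=\pi^n_1(x)$, one has $\Lambda^n_{\alpha_{x_j}}(x_i,x_j)=\Lambda^n_0(x_i,x_j)$, so both candidate choices of $p$ give the same map, and it is automatically simplicial because it is defined vertex-wise. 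For (1), the identity you actually need is $\Lambda^n_{\alpha_y}(x,y)=x$ when $\pi^n(x)=\pi^n(y)$, which follows at once from $\alpha_y=\pi^n_1(y)=\pi^n_1(x)$; it is not $\Lambda^n_p(x,x)=x$, and the paper leaves this step implicit too.
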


\begin{proof}
    \textbf{Construction of $\epsilon_n^S$:\\}

    Let $\sigma: \Delta^m \to \Cart^n$ be a morphism. Let $x_0,x_1,\cdots x_m$ be vertices of this $m$-simplex. We define $\epsilon^S_n(\sigma): \Delta^m \times \Delta^m \to \Cart^n$ defined as follows:
    \begin{equation}
        \epsilon^S_n(\sigma)(a,b) = \begin{cases}
            \Lambda^n_{\alpha_{x_a}}(x_b,x_a) ~~ a\ge b\\
            \mu^n_n(x_a,x_b) ~~ a \le b
        \end{cases}
    \end{equation}
  We follow the steps of the proof of  \cite[Claim 5.5.2]{chowdhury2025sixfunctorformalismsii}, it turns out that we only need to check the condition for $a=b$, $f:x_a \to x_{a+1} \in \boxplus^n_{\op{cart}}$ and $\pi^n(x_a)=\pi^n(x_{a+1})$ . In this case, the square is :
  \begin{equation}
      \begin{tikzcd}
        x_a \arrow[r] \arrow[d] & \mu^n_n(x_a,x_{a+1}) \arrow[d]\\
        \Lambda^n_{\alpha_{x_{a+1}}}(x_a,x_{a+1}) \arrow[r] & x_{a+1}
      \end{tikzcd}
  \end{equation}
  By \cref{Lambdamulemmaequality}, we get that the square looks like this :
  \begin{equation}
       \begin{tikzcd}
          x_a \arrow[d,equal] \arrow[r,"f"] & x_{a+1} \arrow[d,equal] \\
        x_a \arrow[r,"f"] & x_{a+1}
            \end{tikzcd}
  \end{equation}
This shows that $\epsilon^{\S}_n$ satisfies Condition $1$. Conditions $2$ and $3$ just exactly follows as the arguments in \cite{chowdhury2025sixfunctorformalismsii}. Let us roughly state the changes in the argument. As $\tau'$ is an element of $\Krt(\tau)$, it is a right Kan extension of $\tau$ along $\sigma^n : [n] \times [n] \to \Cart^n$.  By \cite[Lemma 5.1.4]{chowdhury2025sixfunctorformalismsii}, the map $\tau^n(\tau')$ sends exact squares to pullback squares. This completes the proof of 2.

\end{proof}

\subsection{Proof of \cref{Theorem: Ext along non adm edges}}

As the proof of \cref{Theorem: Ext along non adm edges} follows the same procedure as the proof of \cite[Thm B]{chowdhury2025sixfunctorformalismsii}, we just give a sketch of the main points of the proof.

\begin{proof}[Proof of \cref{Theorem: Ext along non adm edges}]
  Let $\tau$ be an $n$-simplex of $\dd^*_2\Ca^r_{\E_1,\E_2}$.   Using Point 2 of \cref{Proposition: Key Prop for pcart non adm}  and using variant of \cite{chowdhury2025sixfunctorformalismsii}, we get the map
 \begin{equation}
     \alpha'_n : \Krt(\tau) \xrightarrow{\tau^n}  \op{Fun}( \delta^*_2\delta^{2\bx}_*(\Cart^n,\F_{\S}^{\op{cart}}),\dd^*_2\Ca^{Q,\op{cart}}_{\E_1,\E_2}) \xrightarrow{g_{\op{cart}}} \op{Fun}( \delta^*_2\delta^{2\bx}_*(\Cart^n,\F_{\S}^{\op{cart}}),\D) \xrightarrow{\epsilon_n^{\op{cart},\S}} \op{Fun}(\boxplus^n_{\op{cart}},\D)
 \end{equation}
 In a similar way as in \cite[Thm B]{chowdhury2025sixfunctorformalismsii}, we define 
 \begin{equation}
     \N(\tau):= \Krt(\tau) \times_{\op{Fun}(\boxplus^n_{\op{cart}},\D)} \op{Fun}(\Cart^n,\D)
 \end{equation}
 As $\Krt(\tau)$ is weakly contractible and $\boxplus^n_{\op{cart}} \hookrightarrow \Cart^n$ is inner anodyne, we have $\N(\tau)$ is weakly contractible. We also get the map:
 \begin{equation}
     \alpha^n : N(\tau) \to \op{Fun}(\Cart^n,\D) \xrightarrow{i \circ \sigma^n} \op{Fun}(\Delta^n,\D).
 \end{equation}
Again following the exact argument in \cite[Thm B]{chowdhury2025sixfunctorformalismsii}, we see that $\alpha^n$ constructed is functorial over $\Delta_{/\dd^*_2\Ca_{\E_1,\E_2}}$ and it satisfies the condition of \cite[Theorem 4.1.1]{chowdhury2023sixfunctorformalismsi}. This produces the map:
\begin{equation}
    g_{\op{comm}} : \dd^*_2\Ca^R_{\E_1,\E_2} \to \D.
\end{equation}
\end{proof}

\subsection{The natural transformation $\op{Pur}_{\D}$}

The theorem below upgrades the equivalence $f_! \circ \Sigma_f \cong f_{\#}$.

\begin{theorem}\label{Theorem:PurityFunctorialFullForm}
Under the conditions of \cref{puritysigmaequvialencefunctorial}, there exists a natural transformation:
\begin{equation}
   \op{Pur}_{\D}: \Delta^1 \times \Ca_{\E \cap \S} \to \op{Cat}_{\infty}
\end{equation}
where 
\begin{enumerate}
    \item $\op{Pur}_{\D}|_{[0] \times \Ca} = \D^{\Sigma}_!$.
    \item $\op{Pur}_{\D}|_{[1] \times \Ca} = \D_{\#}$.
    \item It is an equivalence.
\end{enumerate}
\end{theorem}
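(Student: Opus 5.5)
The natural transformation will come from the same machinery used for $\D^{\Sigma}_!$, carried out over $\Delta^1 \times \Ca_{\E\cap\S}$ instead of $\Ca_{\E\cap\S}$. The key observation is that for a single $f : X \to Y$ in $\E \cap \S$, the purity map $f_{\#} \to f_!\Sigma_f$ is itself obtained from $\D^{\op{cart}}_{\#!}$. We apply it to the grid $\op{Sq}(f)$ and compare two ways of traversing the square. Along the top and right edges (identity in the $\S$-direction, then $f$ in the $\E$-direction) it gives $f_!$ precomposed with $\Sigma_f$. Along the left and bottom edges (identity in the $\E$-direction, then $f$ in the $\S$-direction) it gives $f_{\#}$. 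The comparison between them is the $\op{Ex}_{\#!}$ attached to the decomposition through $X\times_Y X$. So the plan is to realize $\op{Pur}_{\D}$ as a composite
\begin{equation*}
\Delta^1 \times \Ca_{\E\cap\S} \xrightarrow{\ \widetilde{\op{Sq}}\ } \dd^*_2\Ca^R_{\S,\E} \xrightarrow{\ \D_{\#!}\ } \op{Cat}_{\infty},
\end{equation*}
where $\D_{\#!}$ is the dotted extension of $\D^{\op{cart}}_{\#!}$ along $p_{\op{cart}}$ produced by \cref{Theorem: Ext along non adm edges}.

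\textbf{Step 1.} Construct $\widetilde{\op{Sq}}$. An $n$-simplex of $\Delta^1\times\Ca_{\E\cap\S}$ is a chain $X_0\to\cdots\to X_n$ together with a cut index $0\le k\le n+1$, recording where the $\Delta^1$-coordinate jumps from $0$ to $1$. Send it to the $n\times n$ grid in $\dd^*_2\Ca^R_{\S,\E}$ built as follows. Start from the grid of $\op{Sq}$. On the rows and columns indexed by vertices with $\Delta^1$-coordinate $1$, replace the identity $\S$-edges by the genuine maps $f_i$. Concretely, the bisimplex $\Delta^n\times\Delta^n\to\Ca$ sends $(a,b)\mapsto X_{\max(a,b)}$ when $a,b<k$, and sends $(a,b)\mapsto X_b$ when $a\ge k$. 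This is compatible with faces and degeneracies, so it defines a map of simplicial sets. Every square in it is either degenerate in one direction or of the form appearing in $\op{Sq}$, hence lies in $R$. The restriction to $\{0\}\times\Ca_{\E\cap\S}$ is $\op{Sq}$ by construction, so $\D_{\#!}\circ\widetilde{\op{Sq}}|_{\{0\}}=\D^{\Sigma}_!$. This is the content of \cref{puritysigmaequvialencefunctorial}.

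\textbf{Step 2.} Identify the restriction to $\{1\}\times\Ca_{\E\cap\S}$. Here the grid only uses $\S$-edges and identity $\E$-edges. It therefore factors through the inclusion $\Ca_{\S}\to\dd^*_2\Ca^{Q,\op{cart}}_{\S,\E}$ along the horizontal direction. The lift $\D_{\#!}$ agrees with $\D^{\op{cart}}_{\#!}$ on the image of $p_{\op{cart}}$, and on $\S$-edges $\D^{\op{cart}}_{\#!}$ is $\D_{\#}$ by the partial adjoint construction. So this restriction is $\D_{\#}$.

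\textbf{Step 3.} Show the transformation is an equivalence. It suffices to check this on the components $\op{Pur}_{\D}(\Delta^1\times\{X\})$, since a natural transformation of functors into $\op{Cat}_{\infty}$ is invertible iff it is so objectwise. The component at $X$ is the image of the degenerate square on $X$, which is the identity of $\D(X)$. For additional reassurance we can also look at the edge $\Delta^1\times\{f\}$. It is sent to the two-square decomposition of $\op{Sq}(f)$, so its image in $\op{Cat}_\infty$ is the purity map $\op{Pur}_f$ built from $\op{Ex}_{\#!}$ on the pullback square through $X\times_YX$ and the composition through $\delta_f$. This is invertible by hypothesis (2) of \cref{puritysigmaequvialencefunctorial}.

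\textbf{Main obstacle.} I expect the main difficulty to be Step 1 combined with the compatibility in Step 2. One must verify that $\widetilde{\op{Sq}}$ really lands in the $R$-tiled simplicial set, with every square admissible. One must also verify that the extension $\D_{\#!}$ from \cref{Theorem: Ext along non adm edges} is canonical enough to be evaluated consistently on both ends; this uses the contractibility of $\N(\tau)$. If checking $R$-tiles directly on $\Delta^1\times\Ca$ becomes messy, the fallback is to rerun the proof of \cref{Theorem: Ext along non adm edges} with $\Delta^1\times\Ca_{\E\cap\S}$ as the source. The same $\Krt(\tau)$ and $\epsilon^{\S}_n$ argument, together with \cite[Theorem 4.1.1]{chowdhury2023sixfunctorformalismsi}, then produces the map directly.
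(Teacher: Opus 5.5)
Your strategy is the paper's: produce a map $\Delta^1\times\Ca_{\E\cap\S}\to\dd^*_2\Ca^R_{\S,\E}$ that is $\op{Sq}$ over $\{0\}$ and the pure $\S$-direction embedding over $\{1\}$. You then compose with the fixed lift $\D_{\#!}$ and observe that each component $\Delta^1\times\{X\}$ goes to a degenerate simplex, hence to $\op{id}_{\D(X)}$. Describing all $n$-simplices of the product at once via a cut index is tidier than the paper's definition on the top simplices of $\Delta^m\times\Delta^1$ followed by gluing.

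\textbf{The gap.} The formula in Step~1, which carries the whole proof, is wrong as stated, in two ways.
\begin{enumerate}
\item The paper's $\op{Sq}(\tau)$ is $(a,b)\mapsto X_{\min(a,b)}$, with identities along the first row and column. This is also the square you describe in your opening paragraph. Your $X_{\max(a,b)}$ is a different grid: for $n=1$ it has $f$ on both edges leaving $X$ and identities into $Y$. Its factorisation runs through $Y\times_Y Y=Y$ rather than $X\times_Y X$, so $\delta_f$ never appears, and the restriction over $\{0\}$ does not give $\D^{\Sigma}_!$.
\item More seriously, the combined formula is not monotone in $a$. At the cut, $\max(k-1,b)>b$ whenever $b<k-1$, so the edge $(k-1,b)\to(k,b)$ would have to be a morphism $X_{k-1}\to X_b$. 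For $n=k=2$ the column $b=0$ reads $X_0\to X_1\to X_0$. So $\widetilde{\op{Sq}}$ is not a map of simplicial sets at all.
\end{enumerate}

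\textbf{The repair.} Use $\min$. For an $n$-simplex $\sigma\colon\Delta^n\to\Ca_{\E\cap\S}$ with cut $k$, put
\[
\widetilde{\op{Sq}}(\sigma,k)=\sigma\circ\phi_k,\qquad \phi_k(a,b)=\begin{cases}\min(a,b), & a<k,\\ b, & a\ge k,\end{cases}
\]
with $b$ the $\S$-axis. This works for the following reasons.
\begin{itemize}
\item $\phi_k$ is monotone, since $\min(k-1,b)\le b$.
\item For every $\theta\colon[m]\to[n]$ with pulled-back cut $k'$ one has $\theta\circ\phi_{k'}=\phi_k\circ(\theta\times\theta)$, because $\theta$ commutes with $\min$.
\item It restricts to $\op{Sq}$ for $k=n+1$ and to $(a,b)\mapsto X_b$ for $k=0$.
\item On the two nondegenerate $2$-simplices of $\Delta^1\times\Delta^1$ over an edge $f$, it reproduces the paper's two $3\times 3$ grids, the second up to transposition.
\end{itemize}

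\textbf{Further corrections.} Row $a=k-1$ now contains, for $b\ge k$, squares with $X_{k-1}=X_{k-1}$ on top and $f_b\colon X_b\to X_{b+1}$ at the bottom. These are neither degenerate in a direction nor squares of $\op{Sq}$, contrary to your claim. They still lie in $R$. Hypothesis (2) puts every pullback square of $\S$- and $\E$-edges into $Q$, and their comparison map to the fibre product is a base change of $\delta_{f_b}$. So membership in $R$ is not the real obstacle.

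Nor is any canonicity of the lift needed. $\D^{\Sigma}_!$ is by definition $\D_{\#!}\circ\op{Sq}$ for the same fixed lift, and $\D_{\#!}\circ p_{\op{cart}}=\D^{\op{cart}}_{\#!}$ holds on the nose. With these corrections, Steps~2 and~3 go through as you wrote them.
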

\begin{proof}
 We have the functor :
 \begin{equation}
     \D_{\#!} : \dd^*_2\Ca^R_{\S,\E} \to \op{Cat}_{\infty}
 \end{equation}
  as constructed before. The theorem shall follow from constructing a map:
  \begin{equation}
      \op{Pur}'_{\D} : \Delta^1 \times \Ca_{\S}\to \dd^*_2\Ca^R_{\S,\E}
  \end{equation}
  which after composing with $\D_{\#!}$ satisfies the condition. By adjunction, we construct the morphism
  \begin{equation}
      \op{Pur}'_{\D}: \Ca_{\E \cap \S} \to \op{Fun}(\Delta^1,\dd^*_2\Ca^R_{\S,\E}).
  \end{equation}
The construction is defined on level of every $n$-simplices:
\begin{enumerate}
    \item \textbf{n=0:} Let $X \in \Ca$, we define 
    \begin{equation}
        \op{Pur}'_{\D}(x):= \begin{tikzcd}
            X \arrow[r,equal]  \arrow[d,equal] & X \arrow[d,equal] \\
            X \arrow[r,equal] & X
        \end{tikzcd}
    \end{equation}
    \item \textbf{n=1:} Let $f: X \to Y \in \Ca_{\E \cap \S}$, then $\op{Pur}'_{\D}(f)$ should be a morphism of the form
    \begin{equation}
        \Delta^1 \times \Delta^1 \to \dd^*_2\Ca_{\S,\E}
    \end{equation}

Such a morphism comprises of two simplicies with the same edge. The two simplicies are defined as follows:
\begin{equation}
    \begin{tikzcd}
       X \arrow[r,equal] \arrow[d,equal] & X \arrow[r,equal] \arrow[d,"f"] & X \arrow[d,"f"] \\
       X \arrow[r,"f"] \arrow[d,equal] & Y \arrow[r,equal] \arrow[d,equal] & Y \arrow[d,equal] \\
       X \arrow[r,"f"] & Y \arrow[r,equal]& Y
    \end{tikzcd} 
\end{equation}
and 
\begin{equation}
\begin{tikzcd}
    X \arrow[r,equal] \arrow[d,equal] & X \arrow[r,equal] \arrow[d,equal] & X \arrow[d,equal]\\
    X \arrow[r,equal] \arrow[d,equal] & X \arrow[d,"f"] \arrow[r,equal] & X \arrow[d,"f"] \\
    X \arrow[r,"f"] & Y \arrow[r,equal] & Y 
    \end{tikzcd}
\end{equation}
\item \textbf{n=m:} Let $\sigma : \Delta^m \to \Ca$ and we denote the vertices by $x_0,x_1,\cdots x_m$. 
We need to define a morphism:
\begin{equation}
    \Delta^m \times \Delta^1 \to \dd^*_2\Ca_{\E_1,\E_2}
\end{equation}
Note that to define such a morphism, we need to define the $(m+1)$ non-degenerate $\Delta^{m+1} \to \Delta^m \times \Delta^1$ which are glued together along the faces. Such a simplex is enumerated by vertices $\sigma_i:\{(0,1),(0,2) \cdots (0,i),(1,i),(1,i+1),\cdots (1,m)\}$ for $0 \le i \le m$.
We define $\op{Pur}(\sigma)(\sigma_i): \Delta^{m+1} \to \dd^*_2\Ca_{\S,\E}$ as a morphism
\begin{equation}
    \Delta^{m+1} \times \Delta^{m+1} \to \Ca
\end{equation}
Using the fact that inclusion $I^{m+1} \hookrightarrow \Delta^{m+1}$ is an inner anodyne and $I^{m+1} = \coprod_{j=1}^{m} \Delta^{\{j-1,j\}}$, it is sufficient to define
\begin{equation}
    \op{Pur}'_{\D}(\sigma)(\sigma_i)_{a,b}:\Delta^{\{a,a+1\}} \times \Delta^{\{b,b+1\}} \to \Ca
\end{equation}
for $0 \leq a,b \leq m$. We define the square on the range of $a$ and $b$ as follows:
\begin{enumerate}
    \item \textbf{a=b:} 
   \begin{equation}
       \op{Pur}'_{\D}(\sigma)(\sigma_i)_{a,a}= \begin{cases}
           \begin{tikzcd}
               x_a \arrow[r,equal] \arrow[d,equal] & x_a \arrow[d] \\
               x_a \arrow[r] & x_{a+1}
           \end{tikzcd} \, a<i \\ \,\,
            \begin{tikzcd}
               x_i \arrow[r,equal] \arrow[d,equal] & x_i \arrow[d,equal] \\
               x_i \arrow[r,equal] & x_i
           \end{tikzcd} \,\,\,\,\,\,\,\, a=i \\
           \begin{tikzcd}
               x_a \arrow[r,equal] \arrow[d] & x_i \arrow[d] \\
               x_{a+1} \arrow[r,equal] & x_{a+1}
           \end{tikzcd}\, a > i \\
           \end{cases}
   \end{equation}
   \item \textbf{$a>b$:}
   \begin{equation}
       \op{Pur}'_{\D}(\sigma)(\sigma_i)_{a,b}=\begin{cases}
           \begin{tikzcd}
               x_b \arrow[r,equal] \arrow[d] & x_b \arrow[d] \\
               x_{b+1} \arrow[r,equal] & x_{b+1}
           \end{tikzcd} \, b<i \\ \,\,
           \begin{tikzcd}
               x_i \arrow[r,equal] \arrow[d,equal] & x_i \arrow[d,equal] \\
               x_i \arrow[r,equal] & x_i
           \end{tikzcd}\,\,\,\,\,\,\, b=i \\
          \begin{tikzcd}
               x_b \arrow[r,equal] \arrow[d] & x_b \arrow[d] \\
               x_{b+1} \arrow[r,equal] & x_{b+1}
           \end{tikzcd}\, b>i
       \end{cases}
       \end{equation}
       \item \textbf{$a>b$:}
        \begin{equation}
       \op{Pur}'_{\D}(\sigma)(\sigma_i)_{a,b}=\begin{cases}
              \begin{tikzcd}
               x_a \arrow[r] \arrow[d,equal] & x_{a+1} \arrow[d,equal] \\
               x_{a} \arrow[r] & x_{a+1}
           \end{tikzcd} \, b\leq i \\ 
            \begin{tikzcd}
               x_i \arrow[r,equal] \arrow[d,equal] & x_{i} \arrow[d] \\
               x_i \arrow[r] & x_{i+1}
           \end{tikzcd} \,\,\, b=i+1,a=i\\
           \begin{tikzcd}
               x_a \arrow[r] \arrow[d,equal] & x_{a+1} \arrow[d,equal] \\
               x_{a} \arrow[r] & x_{a+1}
           \end{tikzcd} \, \text{otherwise}
       \end{cases}
   \end{equation}
   
\end{enumerate}
By construction, these simplices glue together to form $\Delta^m \times \Delta^1 \to \dd^*_2\Ca_{\S,\E}$. This completes the construction of $\op{Pur}'_{\D}$ and hence $\op{Pur}_{\D}$. The construction of $\op{Pur}'_{\D}$ is done in such a way that it satisfies the condition of the proposition. 
\end{enumerate}
    
\end{proof}

\begin{remark}\label{remark:extendingpuritytocat2}
We expect the functor $\D^{\Sigma}_!$ to exist even if $\op{Ex}_{\#!}$ is not an equivalence. For this to make sense, the target is the $(\infty,2)$-category of $(\infty,1)$-categories $\op{Cat}_{(\infty,2)}$. Once this is achieved, one can construct $\op{Pur}^{\#,\Sigma}_{\D}$ also evaluated in $\op{Cat}_{(\infty,2)}$ which agrees to our construction once $\op{Ex}_{\#!}$ is an equivalence.
    
\end{remark}
\section{Extension of 6ff to Ind- and Pro-categories.}
In the first part of this section, we introduce the setup and the specific functor categories related to the geometric setup.. We restrict ourselves to considering specific morphisms as transition maps for these functors. This is motivated by projective and inductive systems, such as the classifying space of the loop group and the affine Grassmannian. This helps us to extend the six-functor formalism. We also prove some preliminary propositions that help us to extend the six-functor formalism in the next subsection.

\subsection{Setup and Definitions} Let $(\Ca,\E,\I,\P)$ be a Nagata Setup. Let 

\begin{equation}
    \D^{\op{op}} : \Ca^{\op{op}} \to \op{CAlg}(\op{Pr}^{L}_{\op{cl}})
\end{equation}
which satisfies the conditions of \cite[Theorem 5.2.3]{chowdhury2025sixfunctorformalismsiiiconstruction} so that it can be upgraded to a six-functor formalism 
\begin{equation}
    \D : \op{Corr}(\Ca)_{\E,\op{all}} \to \op{Pr}^L_{\op{cl}}.
\end{equation}
\begin{definition}
    \begin{enumerate}
        \item  Let $\op{HL}$ be the set of class of maps in $f \in \Ca$ such that $f^*$ admits a left adjoint $f_{\#}$ which satisfies base change and projection formula with respect to $(-)^*$ (\cite[Remark 8.27]{zhu2025tamecategoricallocallanglands}). 
      
        \item A class of morphisms $\S \in \E \cap \op{HL}$ is said to satisfy \textit{cohomological purity} if 
        \begin{enumerate}
            \item The natural transformation constructed from the following commutative diagram 

            \begin{equation}\label{diagaonalofmorphismdiag}
                \begin{tikzcd}
                    X \arrow[dr,"\Delta_f"] \arrow[ddr,bend right=80,"\op{id}"] \arrow[drr, bend left =80, "\op{id}"] & {} & {}\\
                    {} & X \times_Y X \arrow[r,"\op{pr}_1"] \arrow[d,"\op{pr}_2"] & X \arrow[d,"f"] \\
                    {} & X \arrow[r,"f"] & Y
                      \end{tikzcd}
            \end{equation}
               given by 
        \begin{equation}
            \op{Pur}_f : f_{\#}=f_{\#}(\op{pr}_{1!}\Delta_{f!}) \xrightarrow{\op{Ex}_{\#!}} f_! (\op{pr}_{2\#}\Delta_{f!}) = f_!\circ \Sigma_f
        \end{equation}
        is an equivalence.
        \item The endofunctor $\Sigma_f:=pr_{2\#}\Delta_{f!}$ is an equivalence.
        \end{enumerate}
 \item A map $f \in \E$ is said to be \textit{cohomologically proper} if $f_! = f_*$.    
  \end{enumerate}
\end{definition}

\begin{remark}
\begin{enumerate}
    \item Let $f \in \E$ such that $\Delta_f$ is cohomologically proper. Then one can construct the natural transformation $\alpha_f : f_! \to f_*$. Considering the same commutative diagram (\cref{diagaonalofmorphismdiag}), we have series of maps:

\begin{equation}
    f_! = f_!(\op{pr_2}_*\Delta_{f*}) \xrightarrow{\op{Ex}_{!*}} f_*\op{pr}_{2!}\Delta_{f*} \xrightarrow[\Delta_f~\text{coh.prop}]{\op{\sim}}f_*\op{pr}_{2!}\Delta_{f!} \xrightarrow{\op{\simeq}} f_*.
\end{equation}
Note that if $f$ is cohomologically proper, then $\alpha_f$ is an equivalence. For example if $f$ is in $\P$, by construction of $(-)_!$  glued from morphisms in $\I$ and $\P$, we have $f$ is cohomologically proper. 
\item If $\op{Ex}_{\#!}$ is an equivalence, then for $f \in \op{HL}$, $\op{Pur}_f$ is an equivalence . 

\end{enumerate}
\end{remark}
\begin{notation}\label{corrsmoothcategory}
 Let 
 \begin{equation}
    \D^{\op{op}} : \Ca^{\op{op}} \to \op{CAlg}(\op{Pr}^{L}_{\op{cl}})
\end{equation}
which satisfies the conditions of \cite[Theorem 5.2.3]{chowdhury2025sixfunctorformalismsiiiconstruction} so that it can be upgraded to a six-functor formalism 
\begin{equation}
    \D : \op{Corr}(\Ca)_{\E,\op{all}} \to \op{Pr}^L_{\op{cl}}.
\end{equation}
Let $\S \in \E \cap \op{HL}$ which satisfy cohomological purity. \\
   Let  $\delta^*_{3,\{2,3\}}\Ca^{\op{cart}^*_!}_{\E,\S\op{all}}$ be the subsimplicial set  of  $ \tau :\dd^*_{3,\{2,3\}}\Ca_{\E,\S,\op{all}}$ whose $n$-simplices are maps $\Delta^n_{\E} \times \times (\Delta^n)^{\op{op}}_{\S}\times (\Delta^n)_{\op{all}}^{\op{op}} \to \Ca$ such that for all $\alpha:\Delta^1_{\E} \times (\Delta^1)^{\op{op}}_{\S}\times (\Delta^1)_{\op{all}}^{\op{op}}  \to  \Delta^n_{\E} \times \times (\Delta^n)^{\op{op}}_{\S}\times (\Delta^n)_{\op{all}}^{\op{op}} $, the composite  \begin{equation} \tau \circ \alpha : \Delta^1_{\E} \times (\Delta^1)^{\op{op}}_{\S}\times (\Delta^1)_{\op{all}}^{\op{op}} \to \Ca \end{equation} of the form :
\begin{equation}\label{1simplexforDcartuppershriek}
    \begin{tikzcd}
        X_0 \arrow[rr," f_3"] \arrow[dr,"g_3"] \arrow[dd,"h_3"] && Y_0 \arrow[dd,"h_2"{xshift=6pt,yshift=-10pt}] \arrow[dr,"g_2"] & {} \\
        {} & X_1 \arrow[rr,"f_2" {xshift=-5pt},crossing over] \arrow[dd,"h_1" {xshift =3pt, yshift=6pt}] && Y_1 \arrow[dd,"h_0"] \\ 
        Z_0 \arrow[rr,"f_1"{xshift=-10pt,yshift=-1pt}] \arrow[dr,"g_1"] && W_0 \arrow[dr,"g_0"] & {} \\
        {} & Z_1  \arrow[rr,"f_0"] \arrow[from=uu,crossing over] && W_1 
    \end{tikzcd}
\end{equation}
where $f_i \in \E, g_i \in \S, h_i \in \op{All}$ for all $i \in \{1,2,3\}$  
 such that
 \begin{enumerate}
 \item the squares
 \begin{equation}
     \begin{tikzcd}
         X_0 \arrow[r,"f_3"] \arrow[d,"h_3"] & Y_0 \arrow[d,"h_2"] \\
         Z_0 \arrow[r,"f_1"] & W_0
     \end{tikzcd}
     and 
      \begin{tikzcd}
         X_1 \arrow[r,"f_2"] \arrow[d,"h_1"] & Y_1 \arrow[d,"h_0"] \\
         Z_1 \arrow[r,"f_0"] & W_1
     \end{tikzcd}
 \end{equation}
 are Cartesian. 
\item On applying $\D$, the squares:
 \begin{equation}\label{commutativesquaresintriplesimplicialset}
     \begin{tikzcd}
         \D(X_0)  \arrow[d,"f_{3!}"] & \D(X_1) \arrow[d,"f_{2!}"]\arrow[l,"g_{3}^*"] \\
         \D(Y_0)  & \D(Y_1) \arrow[l,"g_{2}^*"]
     \end{tikzcd}
     \text{and}
     \begin{tikzcd}
           \D(Z_0)\arrow[d,"f_{1!}"] & \D(Z_1) \arrow[d,"f_{0!}"]  \arrow[l,"g_{1}^*"] \\
         \D(W_0)  & \D(W_1) \arrow[l,"g_{0}^*"]
     \end{tikzcd}
 \end{equation}

 commute.

One can similarly define $\dd^*_{3,\{2,3\}}\Ca^{\op{cart}^!_!}_{\E,\S,\op{all}}$ in the similar fashion where we replace $(-)^*$ in horizontal directions by $(-)^!$ in \cref{commutativesquaresintriplesimplicialset}.
 \end{enumerate}
\end{notation}
\begin{proposition}\label{Proposition: corrextendedformalismincludingsmooth}
    Let $(\Ca,\E,\I,\P)$ be a Nagata setup. Let 

\begin{equation}
    \D^{\op{op}} : \Ca^{\op{op}} \to \op{CAlg}(\op{Pr}^{L}_{\op{cl}})
\end{equation}
which satisfies the conditions of \cite[Theorem 5.2.3]{chowdhury2025sixfunctorformalismsiiiconstruction} so that it can be upgraded to a six-functor formalism 
\begin{equation}
    \D : \op{Corr}(\Ca)_{\E,\op{all}} \to \op{Pr}^L_{\op{cl}}.
\end{equation}
Let $\S \subset \op{HL}$ be a weakly stable class of maps. 
Then $\D_{(\Ca,\E)}$ can be extend to a lax symmetric monoidal functor :
\begin{equation}
    \D_{(\Ca,\S,\E)} : \delta^*_{3,\{2,3\}}((\Ca^{\op{op}})^{\coprod\op{op}})^{\op{cart}^*_!}_{\E,\S\op{all}} \to \op{Pr}^L_{\op{cl}}
\end{equation}
 such that $\D{_(\Ca,\S,\E)}$ sends a $1$-simplex of the form \cref{1simplexforDcartuppershriek} to the morphism:
\begin{equation}
    g_3^* \circ f_{2!} \circ h_0^*: \D(W_1) \to \D(X_0) \end{equation}
\end{proposition}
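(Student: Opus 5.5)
The plan is to build $\D_{(\Ca,\S,\E)}$ in the Liu--Zheng style used throughout \cite{chowdhury2025sixfunctorformalismsiiiconstruction}. We start from the given formalism $\D$ on $\op{Corr}(\Ca)^{\otimes}_{\E,\op{all}}$. This is equivalently a functor out of the bisimplicial set $\dd^*_{2,\{2\}}((\Ca^{\op{op}})^{\coprod,\op{op}})^{\op{cart}}_{\E^-,\op{all}^+}$, with $(-)_!$ in the $\E$-direction, $(-)^*$ in the $\op{all}$-direction, and Cartesian squares in the tiling. We then add a third direction, marked by $\S$, in which the functoriality is again $(-)^*$. The key observation is that for $g\in \S\subset \op{HL}$, the functor $g^*$ is a morphism in $\op{Pr}^L_{\op{cl}}$: it admits both adjoints $g_{\#}$ and $g_*$, so it is a legitimate edge in the target. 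The defining condition of $\dd^*_{3,\{2,3\}}\Ca^{\op{cart}^*_!}_{\E,\S,\op{all}}$ is precisely that the $(\E,\S)$-faces commute after applying $\D$. This is what allows the $\S$-direction to be glued to the $\E$-direction.

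\textbf{Step 1: extend to an $(\S,\op{all})$-bisimplicial functor.} We restrict $\D^{*\otimes}$ to the $\op{all}$ and $\S$ directions. Since both directions are contravariant pullbacks, this gives a functor on $\dd^*_{2}$ of the $(\S,\op{all})$-part, by composing along the diagonal: $\S\subset\op{all}$, so an $(\S,\op{all})$-square is just a commutative square in $\Ca^{\op{op}}$. This part is formal.

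\textbf{Step 2: glue the $\E$-direction to the $(\S,\op{all})$-direction.} We use the partial adjoint / multisimplicial construction theorem (\cite[Theorem 1.0.4]{chowdhury2023sixfunctorformalismsi}, applied as in \cite[Theorem 3.2.1]{chowdhury2025sixfunctorformalismsiiiconstruction}). We view $\D$ on the $(\E,\op{all})$-part as already given. We then consider the $\S$-edges of a $3$-simplex cube of the shape \cref{1simplexforDcartuppershriek}. The front and back faces are Cartesian, so $\D$ already assigns the base change equivalences $h^*f_!\simeq f_!h^*$ to them. The side faces are $(\E,\S)$-squares, which commute after applying $\D$ by hypothesis. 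The top and bottom faces are $(\S,\op{all})$-squares, which commute by Step 1. We must check that these coherences assemble. Concretely, we show that the functor $\Delta^n\times\Delta^n\times\Delta^n\to\op{Pr}^L_{\op{cl}}$ can be obtained from the $2$-dimensional data by the standard Kan extension argument, namely that each required space of fillers is contractible. The restriction to the diagonal $\delta^*_3$ then sends a $1$-simplex to the composite $g_3^*\circ f_{2!}\circ h_0^*$, reading the cube along the path $W_1\to Y_1\to X_1\to X_0$. This matches the formula in the statement.

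\textbf{Step 3: symmetric monoidality.} We run the same construction with $(\Ca^{\op{op}})^{\coprod,\op{op}}$ in place of $\Ca$, using $\E^-$, $\S^-$ (the degenerate-$\alpha$ version) and $\op{all}^+$. This parallels how $\op{Corr}(\Ca)^{\otimes}_{\E,\op{all}}$ is built. Lax monoidality then follows because $\D^{*\otimes}$ is lax monoidal and $f_!$ satisfies the projection formula.

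\textbf{Main obstacle.} The hardest part is Step 2: producing the coherent $3$-dimensional cube data, and not merely commutativity of individual faces. Commutativity of the $(\E,\S)$-faces is only a property imposed on $1$-simplices. To obtain a functor we need it encoded as structure compatible with higher simplices. The first approach I would try is to express $\S$-edges through their left adjoints $g_{\#}$. The $(\E,\S)$-commutation is then the mate of an $\op{Ex}_{\#!}$-type transformation, which is an equivalence on the relevant squares. We can therefore apply the partial-adjoints theorem to the already-constructed $\D_{\#!}$ from Section 3 (via \cref{Theorem: Ext along non adm edges}) and pass back to right adjoints $g^*$ in the $\S$-direction. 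Whether $\op{Ex}_{\#!}$ is available in this generality is the point most likely to require extra hypotheses or a separate argument.
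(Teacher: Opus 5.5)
Your Step~2 is where the entire construction lives, and as written it has a genuine gap. You take the finished formalism $\D$ on the $(\E,\op{all})$-part as given. You then try to attach a new $\S$-direction, using only the fact that each $(\E,\S)$-face commutes after applying $\D$. That is a property of individual squares. A functor out of $\dd^*_{3,\{2,3\}}\Ca^{\op{cart}^*_!}_{\E,\S,\op{all}}$, by contrast, needs a coherent choice of homotopies on all cubes $\Delta^n\times\Delta^n\times\Delta^n$, compatible with the coherences already built into $\D$. Nothing in your setup produces the functorial, weakly contractible families of choices that the construction lemma \cite[Theorem 1.0.4]{chowdhury2023sixfunctorformalismsi} needs. ``Each space of fillers is contractible'' is asserted rather than shown, and there is no reason for it to hold. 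Already the space of ways to fill a single commuting square in $\op{Cat}_\infty$ is a space of natural equivalences, which is typically not contractible.

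Your fallback does not close the gap either, for three reasons. First, it needs $\op{Ex}_{\#!}$ to be an equivalence, which is not a hypothesis of this proposition. Second, the functor from Section~3 has neither an $\op{all}$-direction nor a monoidal structure, so after passing from $g_{\#}$ back to $g^*$ you would face the same gluing problem one direction up. Third, its tiles are built from Cartesian squares on which $\op{Ex}_{\#!}$ is invertible, which is a different condition from the one defining the $(\E,\S)$-tiles here.

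The missing idea, which is how the paper argues, is to put $\S$ in at the very beginning rather than afterwards. Equip $\Ca$ with the tiling $\Box^{\S}$ on the four directions $(\I,\P,\S,\op{all})$:
\begin{itemize}
\item Cartesian squares for $(\I,\P)$, $(\I,\op{all})$ and $(\P,\op{all})$;
\item all commutative squares for $(\S,\op{all})$;
\item for $(\I,\S)$ and $(\P,\S)$, the squares on which $g^*$ commutes with $f_!$ (that is, with $f_{\#}$, resp.\ $f_*$).
\end{itemize}
Since every direction is sent to $(-)^*$, $\D^{*\otimes}$ directly gives a functor on $\dd^*_{4,\{1,2,3,4\}}((\Ca^{\op{op}})^{\coprod,\op{op}})^{\Box^{\S}}_{\I,\P,\S,\op{all}}$, with no coherence to construct. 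One then runs the argument of \cite[Theorem 5.2.3]{chowdhury2025sixfunctorformalismsiiiconstruction}. That means partial adjoints in the $\I$- and $\P$-directions, followed by the compactification theorem merging them into $\E$, with $\S$ and $\op{all}$ carried along passively as $(-)^*$-directions.

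In this route the $(\E,\S)$-commutation condition becomes exactly the left/right adjointability of the $(\I,\S)$- and $(\P,\S)$-tiles that the partial adjoints theorem requires. It is a property to be checked rather than structure to be built, which is why the obstacle you identify never arises. The step that still deserves care there is that the compactification must accommodate these non-Cartesian tiles. Monoidality is then handled as in your Step~3, by working over $(\Ca^{\op{op}})^{\coprod,\op{op}}$ throughout.
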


\begin{proof}
  The proof is just a variant of the construction of $\D_{(\Ca,\E)}$ (\cref{theorem: mainconstructiontheorem}) from $\D$ as proved in \cite[Theorem 5.2.3]{chowdhury2025sixfunctorformalismsiiiconstruction}.  

Let $\op{\Box^{\S}}$ be the tiling on the simplicial set $\Ca$ given by $(\I,\P,\S,\op{all},\{ \mathcal{Q}_{ij} \}_{1\le i,j \le 4})$ where
:
\begin{enumerate}
    \item $Q_{ij}$ are cartesian squares for $(i,j)=\{(1,2),(1,4),(2,4)\}$.
    \item $Q_{ij}$ are commutative squares for $(i,j)=\{(3,4)\}$.
    \item $Q_{ij}$ for $(i,j)=\{(1,3),(2,3)\}$ are squares of the form 
    \begin{equation}
    \begin{tikzcd}
        X' \arrow[r,"f'"] \arrow[d,"g'"] & Y'\arrow[d,"g"] \\
        X \arrow[r,"f"] & Y
        \end{tikzcd}
    \end{equation}
    such that 
    \begin{equation}
    \begin{tikzcd}
        \D(X') \arrow[r,"f'_!"] & \D(Y') \\
        \D(X) \arrow[u,"g^{'*}"] \arrow[r,"f_!"] & \D(Y)\arrow[u,"g^*"]
        \end{tikzcd}
    \end{equation}
    commutes.
\end{enumerate}

The pullback functor $\Ca^{\op{op}} \to \op{CAlg}(\op{Pr}^L)$ produces the functor 
\begin{equation}
   \D_{1,\S}: \dd^*_{4,\{1,2,3,4\}}(\Ca^{\op{op}})^{\coprod,\op{op}})^{\Box^{\S}}_{\I,\P,\S,\op{all}} \to \op{Cat}_{\infty}
\end{equation}
where all the directions map to $(-)^*$.\\
Following the arguments of \cite[Theorem 5.2.3]{chowdhury2025sixfunctorformalismsiiiconstruction} using the theorem of Partial Adjoints(\cite[Theorem 3.2.1]{chowdhury2025sixfunctorformalismsiiiconstruction}) and the Compactification Theorem (\cite[Theorem 2.2.4]{chowdhury2025sixfunctorformalismsii}, we get the desired functor

\begin{equation}
    \D_{(\Ca,\S,\E)} : \dd^*_{3,\{2,3\}}((\Ca^{\op{op}})^{\coprod,\op{op}})^{\op{cart}^*_!}_{\E,\S,\op{all}} \to \op{Pr}^L_{\op{cl}}
\end{equation}

\end{proof}

\begin{proposition}\label{Proposition: corrformalismextendingproper}
      Let $(\Ca,\E,\I,\P)$ be a Nagata setup. Let 

\begin{equation}
    \D^{\op{op}} : \Ca^{\op{op}} \to \op{CAlg}(\op{Pr}^{L}_{\op{cl}})
\end{equation}
which satisfies the conditions of \cite[Theorem 5.2.3]{chowdhury2025sixfunctorformalismsiiiconstruction} so that it can be upgraded to a six-functor formalism 
\begin{equation}
    \D : \op{Corr}(\Ca)_{\E,\op{all}} \to \op{Pr}^L_{\op{cl}}.
\end{equation}

Then $\D$ can be extend to a lax symmetric monoidal functor :
\begin{equation}
    \D_{(\Ca,\P,\E)} : \delta^*_{3,\{3\}}((\Ca^{\op{op}})^{\coprod\op{op}})^{\op{cart}^*_*}_{\E,\P,\op{all}} \to \op{Pr}^L_{\op{cl}}
\end{equation}
 such that $\D{(\Ca,\P,\E)}$ sends a $1$-simplex of the form \cref{1simplexforDcartuppershriek} to the morphism:
\begin{equation}
    (g_3)_* \circ f_{2!} \circ h_0^*: \D(W_1) \to \D(X_0) \end{equation}
\end{proposition}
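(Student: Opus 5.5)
The plan is to follow the proof of \cref{Proposition: corrextendedformalismincludingsmooth} almost verbatim, with the class $\S$ replaced by $\P$ and the left adjoints $(-)_{\#}$ along the new direction replaced by right adjoints $(-)_*$. Concretely, I would introduce the tiling $\Box^{\P}$ on $\Ca$ given by the data $(\I,\P,\P,\op{all},\{Q_{ij}\}_{1 \le i,j \le 4})$. The squares $Q_{ij}$ for $(i,j) \in \{(1,2),(1,4),(2,4)\}$ are cartesian, and $Q_{34}$ consists of commutative squares. The squares $Q_{13},Q_{23}$ are those squares $X' \to Y'$, $X \to Y$ (horizontal in $\I$ resp. $\P$, vertical in $\P$) for which the square obtained by applying $(-)_!$ horizontally and $(-)_*$ vertically commutes. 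The restriction of the pullback functor $\D^{*\otimes}:\Ca^{\op{op}} \to \op{CAlg}(\op{Pr}^L)$ gives, exactly as before, a functor
\begin{equation*}
\D_{1,\P} : \dd^*_{4,\{1,2,3,4\}}((\Ca^{\op{op}})^{\coprod,\op{op}})^{\Box^{\P}}_{\I,\P,\P,\op{all}} \to \op{Cat}_{\infty},
\end{equation*}
with every direction sent to $(-)^*$.

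Next I would apply the theorem of partial adjoints \cite[Theorem 3.2.1]{chowdhury2025sixfunctorformalismsiiiconstruction} three times. In direction $1$ ($\I$) I pass to left adjoints $f_{\#}$, which is legitimate by $\I$-base change and the $\I$-projection formula. In direction $2$ ($\P$) I pass to right adjoints $f_*$, which is legitimate by $\P$-base change and the support property. These two steps are exactly as in \cref{theorem: mainconstructiontheorem}. The new ingredient is direction $3$, which also carries $\P$ but which I now adjoint to right adjoints $g_*$. The required adjointability of the mixed squares in directions $(1,3)$, $(2,3)$ and $(3,4)$ is essentially built into the tiling: the squares in $Q_{13},Q_{23}$ were chosen precisely so that the relevant Beck--Chevalley maps for $(-)_!$ against $(-)_*$ are equivalences, and for $(3,4)$ one uses $\P$-base change. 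The $(3,3)$-compatibility is just the functoriality of $(-)_*$. Afterwards I would glue directions $1$ and $2$ into a single $\E$-direction with $(-)_!$ via the compactification theorem \cite[Theorem 2.2.4]{chowdhury2025sixfunctorformalismsii}, using the Nagata decomposition $f=\bar f\circ j$. This yields a functor on $\dd^*_{3,\{3\}}((\Ca^{\op{op}})^{\coprod,\op{op}})^{\op{cart}^*_*}_{\E,\P,\op{all}}$.

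The last check is that a $1$-simplex of the form \cref{1simplexforDcartuppershriek} is sent to $(g_3)_*\circ f_{2!}\circ h_0^*$, and that the result is lax symmetric monoidal, i.e.\ lands in $\op{Pr}^L_{\op{cl}}$ with the monoidal structure inherited from the $\coprod$-construction. The value on $1$-simplices can be read off from the decomposition of the cube into its three directions.

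The main obstacle is the mixed variance. The $\P$-direction is now covariant with right adjoints $g_*$ while the $\op{all}$-direction is contravariant, so the partial-adjoint step must be applied with the correct orientation. Moreover, $g_*$ is a right adjoint, so "landing in $\op{Pr}^L$" is only ensured because for $g \in \P$ we have $g_* \cong g_!$, which has a further right adjoint $g^!$. I would therefore first identify $g_*$ with $g_!$ for $g\in\P$ using the cohomological properness remark, so that all edges go to left adjoints. Then I would check that the compactification step in \cite{chowdhury2025sixfunctorformalismsii} does not require admissibility of the third marked direction, which appears in the index only through $\{3\}$. If that fails, I would fall back on the non-admissible variant \cref{Theorem: Ext along non adm edges}.
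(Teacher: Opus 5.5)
Your route matches the paper's. Its entire proof is the remark that one reruns \cref{Proposition: corrextendedformalismincludingsmooth} with a different tiling containing $\P$-maps in the new direction ``with a suitable commutative square hypothesis''. The trouble is that the tiling you write down is copied from the smooth case exactly where that hypothesis is needed, so as stated the partial-adjoint step in direction $3$ fails.

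In \cref{Proposition: corrextendedformalismincludingsmooth}, letting $Q_{34}$ be all commutative squares is harmless, because directions $3$ and $4$ both stay contravariant. Here you pass to $g_*$ in direction $3$ while direction $4$ keeps $h^*$. So every $(3,4)$-tile (say $g\colon X\to Y$, $g'\colon X'\to Y'$ in $\P$, and $h'\colon X'\to X$, $h\colon Y'\to Y$ arbitrary) must be sent to a right adjointable square. That is, the exchange map $h^*g_*\to g'_*h'^*$ must be invertible. You appeal to $\P$-base change, but that only covers cartesian squares, and for a general commutative square the map is not invertible. For example, for $\SH$ on $\op{Sch}$ take $X=Y=Y'$, $g=h=\op{id}$ and $X'=\emptyset$; the map is then $\op{id}\to 0$. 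So $Q_{34}$ must be restricted, either to cartesian squares or to the squares on which this exchange map is an equivalence. The latter is the condition later built into $\op{Adj}^*_!(\op{All})$ for the ind extension. The $(\P,\op{all})$-faces of the target $\delta^*_{3,\{3\}}(\cdots)^{\op{cart}^*_*}_{\E,\P,\op{all}}$ must be restricted in the same way.

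The same copying undermines your claim that adjointability of the $(1,3)$- and $(2,3)$-tiles is ``built into the tiling''. In the smooth case the tile condition $g^*f_!\simeq f'_!g'^*$ is literally the adjointability needed, because the $\S$-direction is never adjointed. Here both directions of these tiles get adjointed. Since you apply the theorem one direction at a time, the first application needs a Beck--Chevalley condition on the original square of pullbacks. Use the notation $f\colon X\to Y$, $f'\colon X'\to Y'$ horizontal and $g'\colon X'\to X$, $g\colon Y'\to Y$ vertical in $\P$.

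For a $(1,3)$-tile, passing to $f_{\#}$ needs $\I$-base change $f'_{\#}g'^*\simeq g^*f_{\#}$. Only the later step uses the condition $f_{\#}g'_*\simeq g_*f'_{\#}$ that your $Q_{13}$ records.

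For a $(2,3)$-tile, your condition $f_*g'_*\simeq g_*f'_*$ holds for every commutative square. However, passing to $f_*$ in direction $2$ needs $g^*f_*\simeq f'_*g'^*$.

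For cartesian tiles these conditions all follow from $\I$-base change, $\P$-base change and the support property, as in \cref{theorem: mainconstructiontheorem}. For non-cartesian tiles they must be added to the definition of $\Box^{\P}$. With the tiling corrected in this way, the rest of your plan goes through as you describe: compactification merges directions $1$ and $2$ into $\E$, and $g_*\simeq g_!$ for $g\in\P$ ensures everything lands in $\op{Pr}^L$.
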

\begin{proof}
    The proof follows the same arguments as the previous proposition (\cref{Proposition: corrextendedformalismincludingsmooth}) using a different tiling including maps in the direction of $\P$ with a suitable commutative square hypothesis. 
\end{proof}

\begin{definition}\label{definition:IndProcatdefinition}
    Let $K$ be a filtered $\infty$-category. According to the setup above, we define the following categories:
    \begin{enumerate}
        \item Let $\op{Fun}^K_S(\Ca^{\op{op}})$ be the full subcategory of $\op{Fun}(K,\Ca^{\op{op}})$ spanned by functors $f: K \to \Ca^{\op{op}}$ such that $f(e)\in S$ for all edges $e \in K$. 
         \item Let $\tilde{\P} \subseteq \P$ be a subclass of morphisms in $\E$ stable under pullbacks, compositions and contains isomorphisms. Let $\op{Ind}^K_{\tilde{\P}}(\Ca)$ be the full subcategory of $\op{Fun}(K,\Ca)$ spanned by functors $f: K \to \Ca$ such that $f(e) \in \E$ for all edges $e \in K$. Similarly as in the previous situation, let $\op{Ind}(\E)$ be the set of edges in $\op{Ind}^K_{\tilde{\P}}(\Ca)$ which are levelwise in $\E$.
    \end{enumerate}
\end{definition}

\begin{remark}
  The above definition is motivated by the fact that for  the pro-algebraic group scheme $L^+G$ has smooth surjections with transition maps, and the ind scheme $LG$ has closed immersions (i.e., proper) as transition maps. The objects of  $\op{Pro}^K_{\S}(\Ca)$ can be related to placid schemes as defined in \cite{BOUTHIER2022108572} when $K$ is filtered.
\end{remark}
We define a special class of morphisms in Ind and Pro categories that will be useful for stating the theorems. 
\begin{definition}\label{defintiion:IndProAdjedgesdefinition}

\begin{enumerate}
    \item Let $\op{Adj}_!^!(\E))$ be the set of edges $f \in \Delta^1 \times K \to \Ca_{\E}$ in $\op{Pro}^K_{\S}(\Ca)$  with the following conditions: 
    \begin{enumerate}
    \item they are stable under pullbacks in $\op{Pro}^K_{\S}(\Ca)$.
    \item For every $\tau: k \to k'$, the square  
    \begin{equation}
    \begin{tikzcd}
        \D(f(0,k')) \arrow[r," p_!"] \arrow[d,"f_{k!}"] & \D(f(0,k)) \arrow[d,"f_{k'!}"]\\
        \D(f(1,k')) \arrow[r,"q_!"] & \D(f(1,k))
        \end{tikzcd}
    \end{equation}
    is horizontally right adjointable.
    \end{enumerate}
    \item Let $\op{Adj}_!^*(\op{All})$ be the set of edges $f \in \Delta^1 \times K^{\op{op}} \to \Ca$ in $\op{Ind}^K_{\tilde{\E}}\Ca$  with the following conditions:

    \begin{enumerate}
    \item they are stable under pullbacks in $\op{Ind}^K_{\tilde{\P}}(\Ca)$.
    \item For every $\tau: k \to k'$, the square  given by
    \begin{equation}
    \begin{tikzcd}
        \D(f(1,k)) \arrow[r," p_!"] \arrow[d,"f_{k}^*"] & \D(f(1,k')) \arrow[d,"f_{k'}^*"]\\
        \D(f(0,k)) \arrow[r,"q_!"] & \D(f(0,k'))
        \end{tikzcd}
    \end{equation}
    commutes. 
    \end{enumerate}
\end{enumerate}

\end{definition}
\begin{example}\label{example:proandindadjointablemorphisms}
    \begin{enumerate}
        \item Assume for every morphism in $\S$, the pullback functor is fully faithful and $\D$ satisfies cohomological purity i.e. $f^! \cong f^*$. Then every edge $ f \in \op{Pro}^K_{\S}(\Ca)$ between the pro system $X_{\bb}:= f|_{[0]}$ and the constant system $f_{[1]}= Y_{\bb}= Y$ belongs to $\op{Adj}^!(\E)$\footnote{this is essentially the argument in \cite[Example 2.6]{yaylali2025rationalmotivesproalgebraicstacks}}. Firstly, we check the condition of horizontal right adjointability. This follows from the fact that for every $k' \to k$ in $\Ca$, the diagram 
        \begin{equation}
        \begin{tikzcd}
            \D(X_k) \arrow[dr,"f_{k!}"] && \D(X_{k'}) \arrow[ll,"p^!"] \arrow[dl,"f_{k'!}"] \\
            {} & \D(Y) & {}
            \end{tikzcd}
        \end{equation}
        commutes. This follows from the chain of isomorphisms:
       \begin{align*}
           f_{k!} p^! \\ \cong  f_{k'!}p_!p^! \\ \xrightarrow[\op{Coh. Purity}]{\cong} f_{k'!}p_{\#}p^* \\ \xrightarrow[p^*\,\text{fully faithful}]{\cong} f_{k'!}
       \end{align*}
       Secondly, we need to show that pullback of such a square is stable under pullbacks. Let us take the pullback of $f$ along $g: Z_{\bb} \to Y$. We need to show that for any edge $k' \to k$ in $K$, the diagram:
       \begin{equation}
            \begin{tikzcd}
            \D(X'_k:= X_k \times_Y Z_k) \arrow[d,"f'_{k!}"] & \D(X'_{k'}:=X_{k'} \times_Y Z_{k'}) \arrow[l,"p^{'!}"] \arrow[d,"f'_{k'!}"] \\
            \D(Z_k) &  \D(Z_{k'}) \arrow[l,"q^!"]
            \end{tikzcd} 
       \end{equation}
       commutes. This follows from the previous chain of arguments applied to this square as $p^{'*}$ is fully-faithful($p'$ is pullback of $p$) and $q$ is fully faithful.  

    \item Assume that every for every morphism in $\tilde{\P}$, the exceptional pullback is fully faithful. Then for every morphism $f \in \op{Ind}^K_{\tilde{\P}}(\Ca)$ between the ind system $X_{\bb}:=f|_{[0]}$ and the constant system $f_{[1]}=Y_{\bb}=Y \in \Ca$ belongs to $\op{Adj}^*_*(\op{All})$. This essentially follows from the similar arguments that we did in the previous example. 
    \end{enumerate}
    
\end{example}
It turns out that, by functorial cohomological purity, the upper shriek and upper star extension agree for  objects in $\op{Pro}^K_{\S}(\Ca)$.
\begin{proposition}\label{uppershriekupperstarsmoothequivalence}
    Let $X_{\bb}: K \to \Ca \in \op{Pro}^K_{\S}(\Ca)$ and $\D$ be a presentable six-functor formalism and satisfying the conditions of \cref{Theorem:PurityFunctorialFullForm} such that $\Sigma_f$ is trivial for every morphism $f \in \S$. Then  we have an equivalence \begin{equation} \D^*(X_{\bb}) \cong \D^!(X_{\bb}) \end{equation} in the functor category $\op{Fun}(K^{\op{op}},\op{Pr}^L)$. Moreover, we have an 
     equivalence :
    \begin{equation}
      \op{colim}_{(-)^*}\D(X_{\bb}) \cong \op{colim}_{(-)^!} \D(X_{\bb})
    \end{equation}
    in $\op{Pr}^L$.
\end{proposition}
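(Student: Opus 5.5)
The plan is to get both equivalences from the functorial purity transformation of \cref{Theorem:PurityFunctorialFullForm}, by passing to right adjoints and then restricting along the diagram $X_{\bb}$.

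Since $X_{\bb}$ lies in $\op{Pro}^K_{\S}(\Ca)$, it is a functor $K \to \Ca^{\op{op}}$ whose edges are sent to $\S$. We may assume the transition maps lie in $\E \cap \S$, as in the standing setup where $\S \subset \E \cap \op{HL}$. Then $X_{\bb}$ is the same as a functor
\begin{equation}
    X_{\bb}^{\op{op}} : K^{\op{op}} \to \Ca_{\E \cap \S}.
\end{equation}

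\textbf{Step 1.} Apply \cref{Theorem:PurityFunctorialFullForm}. This gives an equivalence of functors $\op{Pur}_{\D} : \D^{\Sigma}_! \simeq \D_{\#}$ from $\Ca_{\E \cap \S}$ to $\op{Cat}_{\infty}$. Both sides take values in $\op{Pr}^L$: $f_{\#}$ and $f_!$ are left adjoints, and $\Sigma_f$ is an equivalence.

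By hypothesis $\Sigma_f$ is trivial for every $f \in \S$. I read this as giving an equivalence of functors $\D^{\Sigma}_! \simeq \D_!|_{\Ca_{\E\cap\S}}$. Concretely, $\D^{\Sigma}_!$ is built from $\D^{\op{cart}}_{\#!}$ via $\op{Sq}$, and the $\S$-direction contributions are exactly the $\Sigma$'s. If these are canonically trivialized, then $\D^{\Sigma}_!$ agrees with the restriction of $\D_!$ along $\Ca_{\E\cap\S} \to \dd^*_2\Ca^R_{\S,\E}$ on the $\E$-axis. Composing the two gives
\begin{equation}
    \D_! \simeq \D_{\#} : \Ca_{\E\cap\S} \to \op{Pr}^L.
\end{equation}

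\textbf{Step 2.} Pass to right adjoints using the equivalence $\op{Pr}^L \simeq (\op{Pr}^R)^{\op{op}}$. The right adjoint functor of $\D_{\#}$ is $\D^*$ restricted to $\Ca_{\E\cap\S}^{\op{op}}$, and that of $\D_!$ is $\D^!$. An equivalence of functors transports to an equivalence of the adjoint functors, so
\begin{equation}
    \D^* \simeq \D^! : \Ca_{\E\cap\S}^{\op{op}} \to \op{Pr}^R.
\end{equation}
Each $f^*$ and $f^!$ is itself a left adjoint, because $f^*$ has the right adjoint $f_*$ and $f^!$ is a left adjoint by presentability. Hence this is also an equivalence of functors into $\op{Pr}^L$, through the faithful inclusion of the common core.

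Precomposing with $X_{\bb}^{\op{op}}$ gives the equivalence $\D^*(X_{\bb}) \simeq \D^!(X_{\bb})$ in $\op{Fun}(K^{\op{op}},\op{Pr}^L)$.

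\textbf{Step 3.} Taking $\op{colim}$ in $\op{Pr}^L$ of equivalent diagrams gives equivalent colimits. This yields the second claim.

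\textbf{Main obstacle.} The hard part is Step 1: rigorously identifying $\D^{\Sigma}_!$ with $\D_!$ as functors, not merely objectwise, from the hypothesis that $\Sigma_f$ is trivial. I would first try to strengthen the hypothesis to a functorial trivialization, namely a natural equivalence between the $\S$-part of $\D^{\op{cart}}_{\#!}\circ \op{Sq}$ and the identity.

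Failing that, I would try to show that the transformation $\D_! \to \D^{\Sigma}_!$ exists functorially. This should come by applying $\D^{\op{cart}}_{\#!}$ to the degeneracy from the identity-on-$\S$ grid into $\op{Sq}$, built in the same simplex-by-simplex manner as $\op{Pur}'_{\D}$. That transformation is then pointwise an equivalence because each $\Sigma_f$ is trivial, and a natural transformation that is pointwise an equivalence is an equivalence of functors.

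The adjoint passage in Step 2 is standard by \cite{HTT}.
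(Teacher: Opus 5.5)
Your proposal is essentially the paper's proof. Like the paper, you restrict the purity equivalence $\op{Pur}_{\D}$ of \cref{Theorem:PurityFunctorialFullForm} along $X_{\bb}$, pass to right adjoints (the paper does this reduction first), and take colimits in $\op{Pr}^L$. The identification $\D^{\Sigma}_!\simeq\D_!$ that you flag as the main obstacle is used by the paper without comment, which reads ``$\Sigma_f$ trivial'' as exactly such a functorial trivialization.

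One small correction: $f^!$ is not a left adjoint ``by presentability'' in general. For $f\in\S$ it is one because purity gives $f^!\simeq\Sigma_f f^*$, or simply because $f^!\simeq f^*$ once your Step 2 equivalence is in hand.
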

\begin{proof}
 We prove this in the following steps:
 \begin{enumerate}
     \item \textbf{Taking adjoints:}
     By considering adjoints, it is sufficient to prove
     \begin{equation}
        \D_{\#}(X_{\bb}) \cong \D_!(X_{\bb})
     \end{equation}
     in $\op{Fun}(K,\op{Cat}_{\infty})$.  
 \item \textbf{The cohomological Purity:} By \cref{Theorem:PurityFunctorialFullForm}, we have the composition:
 \begin{equation}
     \Delta^1 \times \K \xrightarrow{\op{id} \times X_{\bb}} \Delta^1 \times \Ca_{\S} \xrightarrow{\op{Pur}_{\D}} \op{Pr}^L_{\op{cl}}
 \end{equation}
 This is an equivalence which shows that $\D_{\#}(X_{\bb}) \cong \D_{!}(X_{\bb})$.
 \end{enumerate}
\end{proof}

\subsection{Extension Theorems.}

\begin{theorem}\label{theorem:IndProextensions}
 Let 
   \begin{equation}
       \D: \op{Corr}(\Ca)_{\E,\op{all}} \to \op{Pr}^L_{\op{cl}}
   \end{equation}
   be a presentable six-functor formalism  arising from a Nagata setup. Let $K$ be a filtered $\infty$-category.
   \begin{enumerate}
    
   \item  Let $\S \subset {\op{HL}}$ be another class of weakly stable maps in $\Ca$ satisfying Cohomological Purity. Then $\D$ can be extended to a presentable six-functor formalism
   \begin{equation}
       \D_{\op{pro}} : \op{Corr}(\op{Pro}^K_{\S}(\Ca))_{\op{Adj}^!_!(\E)),\op{All}} \to \op{Pr}^L_{\op{cl}}.
   \end{equation}
  \item  $\D$ can be extended to a presentable six-functor formalism
   \begin{equation}
       \D_{\op{ind}}: \op{Corr}(\op{Ind}^K_{\tilde{\P}}(\Ca))_{\E,\op{Adj}^*_*(\op{All})} \to \op{Pr}^L_{\op{cl}}.
   \end{equation}
   
\end{enumerate}
  
\end{theorem}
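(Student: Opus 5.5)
The plan is to build both extensions as left Kan extensions / colimits of the multisimplicial functors from \cref{Proposition: corrextendedformalismincludingsmooth} and \cref{Proposition: corrformalismextendingproper}, evaluated levelwise on the $K$-indexed systems, and then to pass to colimits in $\op{Pr}^L_{\op{cl}}$.

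\textbf{Pro case.} I start from
\[
\D_{(\Ca,\S,\E)} : \delta^*_{3,\{2,3\}}((\Ca^{\op{op}})^{\coprod,\op{op}})^{\op{cart}^*_!}_{\E,\S,\op{all}} \to \op{Pr}^L_{\op{cl}} .
\]
Its first (horizontal) direction, with edges in $\S$ sent to $(-)^*$, will encode the transition maps of a pro-system. Its second direction records $(-)_!$ along $\E$, and its third records $(-)^*$ along all maps.

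To an $n$-simplex of $\op{Corr}(\op{Pro}^K_{\S}(\Ca))_{\op{Adj}^!_!(\E),\op{All}}$, that is a functor $C(\Delta^n)\times K \to \Ca$ (up to op on $K$), I associate for each edge $k\to k'$ of $K$ a simplex of the triple simplicial set, using $\S$ in the $K$-direction. The required condition in $\cref{1simplexforDcartuppershriek}$ is that the squares mixing $(-)_!$ and the $\S$-pullbacks commute. I will check this exactly via the right adjointability in $\op{Adj}^!_!(\E)$, after converting $p^!$ into $p^*$ (up to the invertible $\Sigma_p$) using cohomological purity. Here \cref{uppershriekupperstarsmoothequivalence} is what makes this identification coherent in $K$, rather than merely objectwise.

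This produces a functor
\[
\op{Corr}(\op{Pro}^K_{\S}(\Ca))_{\op{Adj}^!_!,\op{All}} \to \op{Fun}(K,\op{Pr}^L_{\op{cl}}) .
\]
Composing with $\op{colim}_K$ gives $\D_{\op{pro}}$. Since the colimit functor is symmetric monoidal for the Lurie tensor product (filtered colimits commute with $\otimes$ in $\op{Pr}^L$), the result is lax symmetric monoidal. On objects it agrees with $\op{colim}^*\D(\X_k)$ as stated.

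\textbf{Ind case.} I use \cref{Proposition: corrformalismextendingproper} in the same way, with the $K$-direction now given by $\tilde{\P}$-maps sent to $p_*=p_!$. The commutativity condition in $\op{Adj}^*_!(\op{All})$ is exactly what is needed for an edge in the "all" direction to give a valid simplex. This yields a functor into $\op{Fun}(K,\op{Pr}^L_{\op{cl}})$ along $p_!$, and I again take $\op{colim}_{\op{Pr}^L}$. The identification $\op{colim}_{!}\simeq \op{lim}^!$ in $\op{Cat}_\infty$ follows from the standard passage to right adjoints in $\op{Pr}^L$.

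\textbf{Remaining checks and main obstacle.} It remains to verify the two properties required of a presentable six-functor formalism. Base change and the projection formula follow because filtered colimits of the levelwise equivalences are equivalences. Closedness and the existence of adjoints come from presentability.

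I expect the main obstacle to be the coherence step in the first construction. Turning a $K$-family of correspondences into a single simplex of the multisimplicial nerve requires a map
\[
\op{Corr}(\op{Fun}(K,\Ca))\to \op{Fun}(K,\text{triple nerve}) ,
\]
and this is not automatic because $C(\Delta^n)\times K$ must be rearranged into the $\delta^*_3$ format. I would try to write it explicitly on simplices, via the diagonal $\Delta^n\to \Delta^n\times\Delta^n\times\Delta^n$ together with the nerve of $K$. The Kan-extension/compactification machinery of \cref{Theorem: Ext along non adm edges} would then be used to handle the non-admissibility of $\S$.
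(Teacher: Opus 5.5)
Your proposal is correct and follows essentially the paper's route. Both arguments use purity to trade the $\op{Adj}^!_!(\E)$ condition for the $(-)_!/(-)^*$ commutation. Both then map into $\op{Fun}(K,\delta^*_{3,\{2,3\}}(\cdots)^{\op{cart}^*_!}_{\E,\S,\op{all}})$ by an explicit diagonal-then-project formula: for $K=\Delta^m$, a simplex $\beta:\Delta^l\to\Delta^n\times\Delta^m$ goes to $\Delta^l_{\E}\times\Delta^l_{\S}\times\Delta^l_{\op{All}}\to\Delta^n_{\E}\times(\Delta^m)^{\op{op}}_{\S}\times(\Delta^n)^{\op{op}}_{\op{All}}$ followed by the given simplex. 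Finally both apply $\D_{(\Ca,\S,\E)}$ levelwise and take $\op{colim}_K$ in $\op{Pr}^L_{\op{cl}}$, with the ind case done dually via \cref{Proposition: corrformalismextendingproper}.

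The one adjustment is that the paper does not invoke \cref{Theorem: Ext along non adm edges} at this point. The rearrangement problem you flag is instead handled by first replacing $\op{Corr}$ with the simplicial set of Cartesian squares $\delta^*_{2,\{2\}}(\cdots)^{\op{cart}}$, whose simplices are full $\Delta^n\times(\Delta^n)^{\op{op}}$ grids; after that replacement, the explicit formula above already defines the map.
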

\begin{proof} 
    \begin{enumerate}
    \item We construct the extension in the following steps: 
    \begin{enumerate}
  
    \item As we have cohomological purity, we see that one can replace $\op{Adj}^!_!(\E) \cong \op{Adj}^*_!(\E)$. Replacing $\op{Corr}(-)$ by the simplicial set of squares (\cite[Proposition 4.2.3]{chowdhury2025sixfunctorformalismsiiiconstruction}) it follows to construct the presentable six-functor formalism 
        \begin{equation}
            \D_{\op{pro}} : \delta^*_{2,\{2\}}((\op{Pro}^K_{\S}(\Ca)^{\op{op}})^{\amalg,\op{op}})^{\op{cart}}_{\op{Adj}^*_!(\E),\op{All}} \to \op{Pr}^L_{\op{cl}}
        \end{equation} which extends $\D$. By construction of $(\Ca^{\op{op}})^{\amalg,\op{op}}$(\cite[Construction 2.4.3,1]{HA}), we see that:

        \begin{equation}
            \delta^*_{2,\{2\}}((\op{Pro}^K_{\S}(\Ca)^{\op{op}})^{\amalg,\op{op}})^{\op{cart}}_{\op{Adj}^*_!(\E),\op{All} }\cong \dd^*_{2,\{2\}}(\op{Fun}_{\S}(K,(\Ca^{\op{op}})^{\amalg,\op{op}}))^{\op{cart}}_{\op{Adj}^*_!(\E),\op{All}}
        \end{equation}
        
We construct the map  $\alpha_{\op{Pro}}$:
\begin{equation}
   \dd^*_{2,\{2\}}(\op{Fun}_{\S}(K,(\Ca^{\op{op}})^{\amalg,\op{op}}))^{\op{cart}}_{\op{Adj}^*_!(\E),\op{All}}\to \op{Fun}(K,\delta^*_{3,\{2,3\}}(\Ca^{\op{op}})^{\amalg,\op{op}})^{\op{cart}^*_!}_{\E,\S,\op{all}})
\end{equation}
where $\delta^*_{3,\{2,3\}}((\Ca^{\op{op}})^{\amalg,\op{cart}})^{\op{cart}^!_!}_{\E,\S\op{all}}$  defined as in \cref{corrsmoothcategory}. The map is constructed as follows:\\

 Firstly as $K \cong \op{colim}_{m \in \Delta_{/K}}\Delta^m$, it suffices to construct the map for $K=\Delta^m$. Let $\sigma_n$ be an $n$-simplex of L.H.S. This is a morphism of the form :
 $\Delta^n_{\E} \times (\Delta^n)^{\op{op}}_{\op{all}} \times \Delta^m_S \to (\Ca^{\op{op}})^{\amalg,\op{cart}}$ with specified directions of morphisms in $\S$ and $\E$. \\
 We define \[\alpha_{\op{Pro}}(\sigma_n): \Delta^n \times \Delta^m \to \delta^*_{3,\{2,3\}}((\Ca^{\op{op}})^{\amalg,\op{cart}})^{\op{cart}^*_!}_{\E,\S,\op{All}} \]
 as follows:
 Let $\beta : \Delta^l \to \Delta^n \times \Delta^m$. Then $\alpha_{\op{Pro}}(\beta)$ is given by composition of the following three chain of morphisms: 
 \begin{equation}
     \alpha_{\op{Pro}}(\beta):\Delta^l_{\E} \times \Delta^l_{\S} \times \Delta^l_{\op{All}} \to (\Delta^n \times \Delta^m)_{\E} \times (\Delta^n \times \Delta^m)^{\op{op}}_{\S} \times (\Delta^n \times \Delta^m)^{\op{op}}_{\op{All}}
 \end{equation}
 followed by 
 \begin{equation}
     (\Delta^n \times \Delta^m)_{\E} \times (\Delta^n \times \Delta^m)^{\op{op}}_{\S} \times (\Delta^n \times \Delta^m)^{\op{op}}_{\op{All}} \xrightarrow{\op{pr}} \Delta^n_{\E} \times (\Delta^m)^{\op{op}}_{\S} \times (\Delta^n)^{\op{op}}_{\op{All}} \cong \Delta^n_{\E} \times (\Delta^n)^{\op{op}}_{\op{all}} 
 \end{equation}
 and lastly by
 \begin{equation}
     \Delta^n_{\E} \times(\Delta^n)^{\op{op}}_{\op{all}} \times \Delta^m_S \xrightarrow{\sigma_n} (\Ca^{\op{op}})^{\amalg,\op{cart}}
 \end{equation}
 
 \item  As proven in \cref{Proposition: corrextendedformalismincludingsmooth}, we have map \[\D_{(\Ca,\S,\E)}: \dd^*_{3,\{2,3\}}\Ca^{\op{cart}^*_!}_{\E,\S,\op{all}} \to \op{Pr}^L \] where the maps on the directions of $\E,\S$ and $\op{All}$ go to $(-)_!,(-)^*,(-)^*$ respectively.Applying $\D'$ and taking colimits, we the following map
 \begin{equation}
      \alpha'_{\op{Pro}}:\op{Fun}(K,\delta^*_{3,\{2,3\}}((\Ca^{\op{op}})^{\amalg,\op{cart}})^{\op{cart}^*_!}_{\E,\S,\op{all}}) \xrightarrow{\D_{(\Ca,\S,\E)}} \op{Fun}(K,\op{Pr}^L) \xrightarrow{\op{colim}} \op{Pr}^L_{\op{cl}}.
     \end{equation}
     \item Composing $\alpha_{\op{Pro}}$ and $\alpha'_{\op{Pro}}$, we get our desired functor:
     \begin{equation}
         \D_{\op{pro}}:\op{Corr}(\op{Pro}^K_{\S}(\Ca))_{\op{Adj}^!_!(\E),\op{All}}  \xrightarrow{\alpha_{\op{Pro}}} \op{Fun}(K,\delta^*_{3,\{2,3\}}\Ca^{\op{cart}^*_!}_{\E,\S,\op{all}}) \xrightarrow{\alpha'_{\op{Pro}}} \op{Pr}_{\op{cl}}^L.
     \end{equation}
\end{enumerate}
\item The proof of this is dual to the proof of the previous theorem, define $\alpha_{\op{ind}}$ in the corresponding way, keeping track of edges in $\tilde{\P}$. We use \cref{Proposition: corrformalismextendingproper} and take $\op{colim}$ to get the functor to $\op{Pr}^L_{\op{cl}}$.
    \end{enumerate}
\end{proof}

\begin{remark}
\begin{enumerate}
\item The hypothesis that $K$ is filtered ensures that colimits are computed nicely. In particular, if the categories are compactly generated, then filtered colimits shall preserve compact generation. If one just wants to extend the functor without imposing closed properties or compact generation, one can do the above extension for an arbitrarily small simplicial set. 
    \item By construction, for an object $X_{\bb} \in \op{Pro}^K_{\S}(\Ca)$, we have 
    \begin{equation}
        \D_{\op{pro}}(X_{\bb}) \cong \op{colim}_{\op{Pr}^L_{\op{cl}}}^* \D(X_k).
    \end{equation} This agrees with the definition in \cite{yaylali2025rationalmotivesproalgebraicstacks}.
    \item In the similar fashion, for an object $Y_{\bb} \in \op{Ind}^K_{\tilde{\P}}(\Ca)$, we have 
    \begin{equation}
        \D_{\op{ind}}(Y_{\bb}) \cong \op{lim}^!_{\op{Pr}^L_{\op{cl}}}\D(Y_k).
    \end{equation}
    This agrees with upper shriek extension as done for prestacks in \cite[Section 2.2]{Richarz_2021}.
\end{enumerate}
    
\end{remark}
In our applications, one usually wants to extend six-functor formalism, which arises as extensions of Nagata setups. Using the notion of nice geometric and exceptional pairs (\cref{Definition:Nice geometric and exceptional pairs.}), we have the following theorem, which is a consequence of the one above.
\begin{theorem}\label{theorem:proextensiongeoandeceptionalpairs}
Let 
   \begin{equation}
       \D: \op{Corr}(\Ca)_{\E,\op{all}} \to \op{Pr}^L_{\op{cl}}
   \end{equation}
   be a presentable six-functor formalism  arising from a Nagata setup. Let $K$ be a filtered $\infty$-category which admits an initial object $0$.
   \begin{enumerate}
   \item Let $(\Ca,\S,\E) \subset (\Ca,\S',\E')$ be a nice geometric pair. Assume that $\D$ satisfies the \textit{continuity property} i.e. for $X \cong \op{lim}_i X_i$ where transition maps are in $\S$, we have 
   \begin{equation}
       \D(X) \cong \op{colim}^* \D(X_i),
   \end{equation}then $\D_{\op{pro}}$ extends to 
   \begin{equation}
       \D_{\op{pro}} : \op{Corr}(\op{Pro}^K_{\S'}(\Ca'))_{\op{Adj}^!_!(\E'),\op{all}} \to \op{Pr}^L_{\op{cl}}
   \end{equation}
   which agrees with the extension $\D$ when restricted to $\Ca'$.
   \item Let $(\Ca,\S,\E) \subset (\Ca,\S,\E')$ be an exceptional pair. Assume that $\D$ satisfies the continuity property as before, then $\D_{\op{pro}}$ extends to

    \begin{equation}
       \D_{\op{pro}} : \op{Corr}(\op{Pro}^K_{\S}(\Ca))_{\op{Adj}^!_!(\E'),\op{all}} \to \op{Pr}^L_{\op{cl}}.
   \end{equation}

   \end{enumerate}

\end{theorem}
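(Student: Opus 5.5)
The plan is to reduce both parts to \cref{theorem:IndProextensions} and \cref{Theorem:extensionalonggeometricandexceptionalpairs}: first extend $\D$ along the pair, then apply the pro-extension of \cref{theorem:IndProextensions} to the enlarged formalism. The continuity hypothesis is what makes the two extensions compatible on objects of the form $\op{lim}_i X_i$.

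\textbf{Part 1 (nice geometric pair).} By \cref{Theorem:extensionalonggeometricandexceptionalpairs}(1), $\D$ extends to $\D_{(\Ca',\E')} : \op{Corr}(\Ca')_{\E',\op{all}} \to \op{Cat}_{\infty}$. This needs $\D^*$ to satisfy descent along $\S$-\v{C}ech covers, which I take as implicit in ``arising from'' the geometric pair. I then check that $\D_{(\Ca',\E')}$ lands in $\op{Pr}^L_{\op{cl}}$: values are limits in $\op{Pr}^R$ of presentable closed categories along pullbacks, and limits in $\op{Pr}^R$ agree with colimits in $\op{Pr}^L$.

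Next I verify the inputs of \cref{theorem:IndProextensions}(1) for $\Ca'$ with $\S'$:
\begin{enumerate}
\item $\S' \subset \op{HL}$;
\item $\S'$ satisfies cohomological purity;
\item the analogues of \cref{Proposition: corrextendedformalismincludingsmooth} and \cref{Theorem:PurityFunctorialFullForm} hold.
\end{enumerate}
Both $f_{\#}$ and $\op{Pur}_f$ are determined after base change to an atlas $y : Y \to Y'$, and $*$-pullback along atlases is conservative by descent. So $\op{Pur}_f$ is an equivalence because it is one on $\Ca$, using that base changes of $\S'$-maps along atlases lie in $\S$. Once these hold, applying \cref{theorem:IndProextensions}(1) to $\D_{(\Ca',\E')}$ yields $\D_{\op{pro}}$ on $\op{Pro}^K_{\S'}(\Ca')$.

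Compatibility with $\D$ on $\Ca'$ uses the initial object $0$ of $K$. The constant-system embedding $\Ca' \to \op{Pro}^K_{\S'}(\Ca')$ has colimit over a constant diagram (with $K$ weakly contractible, being filtered) equal to $\D(X)$. Continuity then shows the two possible definitions agree on objects $X \cong \op{lim} X_i$ already in $\Ca'$.

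\textbf{Part 2 (exceptional pair).} The argument is analogous, using \cref{Theorem:extensionalonggeometricandexceptionalpairs}(2) to get $\D_{(\Ca,\E')}$ from $!$-codescent along $\S$-hypercovers. The class $\S$ is unchanged, so purity is inherited. The main step is to check that the functor $\D_{(\Ca,\E)}$ satisfies the hypotheses of \cref{Proposition: corrextendedformalismincludingsmooth} with $\E'$ in place of $\E$. That is, the commutative squares mixing $\S$-pullbacks and $\E'$-shriek pushforwards (base change $\op{Ex}^*_!$ for $\E'$) must still be available. These are obtained by resolving an $\E'$-map $f$ by $f_\bullet$ with $f_n \in \E$ and passing to the colimit via codescent.

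\textbf{Main obstacle.} \cref{theorem:IndProextensions} is stated only for formalisms arising from a Nagata setup, whereas the extended formalisms generally do not. So the real work is re-running its proof with \cref{Proposition: corrextendedformalismincludingsmooth} replaced by a version for the extended setups. I would obtain that version by applying the extension machinery of \cref{Theorem:extensionalonggeometricandexceptionalpairs} directly to the triple-simplicial functor $\D_{(\Ca,\S,\E)}$, extending it along the pair levelwise in each direction. I would then check that $\op{Adj}^!_!(\E')$ is stable under pullback, so that $\op{Corr}(\op{Pro}^K_{\S'}(\Ca'))_{\op{Adj}^!_!(\E'),\op{all}}$ is defined. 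Finally, the map $\alpha_{\op{Pro}}$ followed by $\op{colim}$ is constructed verbatim as in the proof of \cref{theorem:IndProextensions}.
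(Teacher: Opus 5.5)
Your plan reverses the order of the two extensions, and that is where it breaks. Once you extend along the pair first, $(\Ca',\E')$ (resp.\ $(\Ca,\E')$) no longer carries a Nagata setup. So \cref{theorem:IndProextensions} is unavailable, and so are the inputs you list in your item 3. These are the triple-simplicial functor of \cref{Proposition: corrextendedformalismincludingsmooth}, which is built from the $\I$/$\P$ decomposition by partial adjoints and compactification, and the functorial purity of \cref{Theorem:PurityFunctorialFullForm}, which is proved under the Nagata hypotheses of \cref{puritysigmaequvialencefunctorial}.

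You see this, but your fix, ``extending $\D_{(\Ca,\S,\E)}$ along the pair levelwise in each direction'', is not supplied by \cref{Theorem:extensionalonggeometricandexceptionalpairs}. That theorem only extends functors out of $\op{Corr}(\Ca)_{\E,\op{all}}$. What you need would be a new descent/codescent theorem for tiled multisimplicial sets, and it is not formal. The tiles in the $(\E,\S)$-directions are not Cartesian squares; they are squares cut out by a $\D$-commutativity condition. Nothing guarantees that atlases or $\S$-hypercover resolutions of such a square on the large side can be chosen to consist of tiles on the small side, compatibly across the filtered system. The same unproved step appears in Part 2, where you assert that the $\S$-pullback/$\E'$-shriek squares come from resolving $f$ by $f_\bullet$ and passing to the colimit. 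As written, the decisive step of your argument is a claim, not a reduction.

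The missing idea is to do the two steps in the other order, which is the paper's argument. First apply \cref{theorem:IndProextensions} on $\Ca$, where the Nagata setup is available, to get $\D_{\op{pro}}$ on $\op{Corr}(\op{Pro}^K_{\S}(\Ca))_{\op{Adj}^!_!(\E),\op{all}}$. Then check that the given nice geometric (resp.\ exceptional) pair induces one between the corresponding pro-categories. Finally apply \cref{Theorem:extensionalonggeometricandexceptionalpairs} to $\D_{\op{pro}}$ itself. That theorem needs no Nagata setup, so no multisimplicial construction has to be redone on the large side. What remains is to verify the pair axioms on pro-categories and the (co)descent hypothesis for $\D_{\op{pro}}$.

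This is also the natural place for the initial object $0\in K$: it lets one produce atlases of a pro-system $\{X'_k\}$ by base-changing an atlas of $X'_0$ along the maps $X'_k \to X'_0$. In your argument $0$ does no real work, since weak contractibility of the filtered $K$ already handles constant systems. That is a further sign the route is not the intended one.
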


\begin{proof}
    Firstly, one checks that in each of these pairs, the corresponding pairs on the Pro-Categories are also nice geometric and exceptional pairs. Then it follows from \cref{Theorem:extensionalonggeometricandexceptionalpairs}.
\end{proof}

In the case of Ind setups, one can extend formalisms without the Nagata setup condition. The theorem is as follows.

\begin{theorem}\label{Theorem:ExtendingIndGeneralSetup}
    Let 
    \begin{equation}
        \D : \op{Corr}(\Ca)_{\E,\op{all}} \to \op{Pr}^L_{\op{cl}}
    \end{equation}
    be a presentable six-functor formalism. Let $\tilde{P}$ be a collection of morphisms in $\E$ such that $f_! \cong f_*$. Then $\D$ can be extended to a  six-functor formalism
    \begin{equation}
        \D_{\op{ind}} : \op{Corr}(\op{Ind}^K_{\tilde{P}}(\Ca))_{\E,\op{Adj}^*_*(\op{All})} \to \op{Pr}^L_{\op{cl}}
    \end{equation}
\end{theorem}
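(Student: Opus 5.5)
The plan is to follow the proof of part 2 of \cref{theorem:IndProextensions}, but without the Nagata decomposition. The only input needed there was a formalism on a triple simplicial set in which the ind-transition direction acts by $(-)_! \cong (-)_*$. So the first step is to build this formalism directly from $\D$. Let $\tilde{P}$ be the given class, and consider the triple simplicial set $\dd^*_{3,\{2,3\}}\Ca^{\op{cart}}_{\E,\tilde{P},\op{all}}$ of \cref{corrsmoothcategory}. Its $\E$- and $\op{all}$-directions form Cartesian squares. Its squares mixing $\tilde{P}$ with the other directions are required to commute after applying $(-)_!$ on $\tilde{P}$, $(-)_!$ on $\E$ and $(-)^*$ on $\op{all}$.

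To get a functor $\D_{(\Ca,\tilde{P},\E)}$ on this simplicial set, I would not use partial adjoints. Instead I would observe that $\tilde{P} \subset \E$, so every multisimplex factors through correspondences. Concretely, restricting along the diagonal $\dd^*_{3,\{2,3\}} \to \dd^*_{2,\{2\}}$ composes the $\tilde{P}$- and $\E$-directions. This gives a map into $\op{Corr}(\Ca)_{\E,\op{all}}$, presented as $\dd^*_{2,\{2\}}\Ca^{\op{cart}}_{\E,\op{all}}$ via \cite[Proposition 4.2.3]{chowdhury2025sixfunctorformalismsiiiconstruction}. Two checks are needed.
\begin{enumerate}
\item The composite of a $\tilde{P}$-edge and an $\E$-edge is in $\E$. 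This holds since $\E$ is stable under composition.
\item The mixed squares are sent to commuting squares, which is exactly the commutativity hypothesis.
\end{enumerate}
Composing with $\D$ and using $f_! \cong f_*$ for $f \in \tilde{P}$ then yields the formalism that replaces \cref{Proposition: corrformalismextendingproper}.

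Next I would build $\alpha_{\op{ind}}: \dd^*_{2,\{2\}}(\op{Fun}_{\tilde{P}}(K,(\Ca^{\op{op}})^{\amalg,\op{op}}))^{\op{cart}}_{\E,\op{Adj}^*_*(\op{All})} \to \op{Fun}(K^{\op{op}},\dd^*_{3,\{2,3\}}(\Ca^{\op{op}})^{\amalg,\op{op},\op{cart}}_{\E,\tilde{P},\op{all}})$. I would reduce to $K=\Delta^m$ via $K \cong \op{colim}\Delta^m$, and define the map simplexwise by the same projection-and-reindexing formula as for $\alpha_{\op{Pro}}$. The $K$-coordinate is placed in the $\tilde{P}$-slot, now covariantly.

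The defining condition of $\op{Adj}^*_*(\op{All})$ is what makes the mixed $\tilde{P}/\op{all}$ squares land in the tiled subsimplicial set. Levelwise $\E$-edges give Cartesian squares levelwise, since pullbacks in functor categories are computed pointwise. Stability under pullback is needed so that the Cartesian squares of the source are admissible.

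Finally I would compose with $\D_{(\Ca,\tilde{P},\E)}$ and take the colimit in $\op{Pr}^L_{\op{cl}}$ over $K$. This identifies $\D_{\op{ind}}(Y_{\bb}) \cong \op{colim}_{!}\D(Y_k) \cong \op{lim}^!\D(Y_k)$. The lax symmetric monoidal structure is inherited because $\op{colim}$ over filtered $K$ is compatible with the Lurie tensor product, with $\otimes$ preserving colimits in each variable. Since the transitions are left adjoints, $\op{lim}^!$ is also a limit in $\op{Cat}_{\infty}$, so closedness and the existence of the right adjoints $f_*$ and $f^!$ can be checked levelwise.

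The main obstacle is the first step: showing that the diagonal-restriction map really lands in the Cartesian-tiled correspondence simplicial set. In particular, the mixed squares only commute after applying $\D$, rather than being pullbacks. If the direct argument fails, I would instead enlarge the tiling as in \cref{Theorem: Ext along non adm edges}. That is, I would apply the $p_{\op{cart}}$-extension to pass from Cartesian $\tilde{P}/\E$ squares to the commutative ones in the $R$-type tiling.
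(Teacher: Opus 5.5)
There is a genuine gap, in the place you flagged, and your fallback does not close it.

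\textbf{Where the direct argument fails.} The non-Cartesian squares are the mixed $(\tilde{P},\op{all})$ squares. For $f\colon X_\bullet\to Y_\bullet$ in $\op{Adj}^*_*(\op{All})$ and $k\to k'$ in $K$, the square formed by $q\colon X_k\to X_{k'}$ and $p\colon Y_k\to Y_{k'}$ is only commutative, subject to $f_{k'}^*p_!\simeq q_!f_k^*$. This is the essential case, not a corner case. For an ind-system with closed-immersion transitions mapping to a constant system (e.g.\ $\op{Gr}_G\to\op{pt}$, as in \cref{example:proandindadjointablemorphisms}), these squares are Cartesian only when the transitions are isomorphisms.

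Merging the $\E$- and $\tilde{P}$-directions along the diagonal pastes such a square onto a Cartesian $(\E,\op{all})$ square. The result is not a pullback, so the restricted simplex does not lie in $\op{Corr}(\Ca)_{\E,\op{all}}$.

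Your check 2 conflates ``commutes after applying $\D$'' with membership in the correspondence category. In $\op{Corr}(\Ca)_{\E,\op{all}}$ the two composites are the distinct correspondences $Y_k\leftarrow X_k\to X_{k'}$ and $Y_k\leftarrow Y_k\times_{Y_{k'}}X_{k'}\to X_{k'}$. They are related only by the $2$-cell $X_k\to Y_k\times_{Y_{k'}}X_{k'}$, which the $(\infty,1)$-category does not record. So factoring through $\D$ this way cannot supply the coherent identification.

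The $p_{\op{cart}}$-extension you offer as a fallback acts on the $(\tilde{P},\E)$ squares, which are both covariant and were never the obstruction. It says nothing about a mixed-variance square subject to a base-change condition.

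\textbf{The missing ingredient.} It is the one you explicitly set aside: partial adjoints. The paper starts from the canonical functor on the triple simplicial set in which $\tilde{P}$ is a \emph{pullback} direction next to $\op{all}$, with $\E\mapsto(-)_!$ and $\tilde{P},\op{all}\mapsto(-)^*$. Here a diagonal argument like yours does work. Merging the two contravariant directions only requires the $(\E,\tilde{P})$ and $(\E,\op{all})$ squares to be Cartesian. The $(\tilde{P},\op{all})$ squares are absorbed into a single pullback direction, so they may be arbitrary commutative squares satisfying the $(-)_*/(-)^*$ compatibility.

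The theorem of partial adjoints then replaces $p^*$ by its right adjoint $p_*\simeq p_!$ along $\tilde{P}$. Its hypotheses are right adjointability of two kinds of squares:
\begin{itemize}
\item the $(\tilde{P},\op{all})$ squares, which is exactly the defining condition of $\op{Adj}^*_*(\op{All})$;
\item the Cartesian $(\tilde{P},\E)$ squares, which reduces to functoriality of $(-)_!$ once $p_*\simeq p_!$.
\end{itemize}
Only afterwards is the compactification theorem used, to enlarge the $(\E,\tilde{P})$ tiling from Cartesian to commutative squares. That is the one point where your fallback matches the paper. Your remaining steps ($\alpha_{\op{ind}}$ and the colimit over $K$ in $\op{Pr}^L_{\op{cl}}$) agree with the paper's reduction to \cref{theorem:IndProextensions}.
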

\begin{proof}
    As seen in the proof of \cref{theorem:IndProextensions}, it is sufficient to construct 
    \begin{equation}
        \D_{(\Ca,\tilde{P},\E)} : \dd^*_{3,\{3\}}((\Ca^{\op{op}})^{\amalg,\op{op}})^{\op{cart}^*_*}_{\E,\tilde{P},\op{All}} \to \op{Pr}^L_{\op{cl}}    \end{equation}

        Firstly, we can see we have the canonical functor

        \begin{equation}
            \dd^*_{3,\{2,3\}}((\Ca^{\op{op}})^{\amalg,\op{op}})^{\bx}_{\E,\tilde{P},\op{all}} \to \op{Pr}^L_{\op{cl}} 
        \end{equation}

where $\bx$ is the tiling given by cartesian squares along $(\E,\tilde{P})$ and $(\E,\op{all})$ directions and the squares satisfying $(-)_*,(-)^*$ compatibility in $(\P,\op{all})$ direction. The morphism sends $(-)_!$ in direction of $\E$ and $(-)^*$ on $\tilde{P},\op{all}$ directions. Using the theorem of partial adjoints, we can construct a functor which switches the morphisms from $(-)^*$ to $(-)_*$ in the direction of $\tilde{P}$. Using the compactification theorem, we can extend the functor to
\begin{equation}
    \D_{(\Ca,\tilde{P},\E}:  \dd^*_{3,\{3\}}((\Ca^{\op{op}})^{\amalg,\op{op}})^{\op{cart}^*_*}_{\E,\tilde{P},\op{all}} \to \op{Pr}^L_{\op{cl}} 
\end{equation}
which sends now $(-)_!$ on $\E,\tilde{P}$ direction and $(-)^*$ on direction $\op{All}$. This completes the proof. 

\end{proof}

\section{Application: Motivic Homotopy Theory  of Ind-pro-Algebraic Stacks.}

In this section, we apply our extension theorems from schemes to  ind-pro algebraic stacks to the motivic homotopy theory ($\SH (-)$). 
\subsection{Motivic Homotopy Theory for Algebraic Stacks.}
To a scheme $S$, Morel and Voevodsky (\cite{Morel-Voevodsky})introduced the motivic stable homotopy category $\op{SH}(S)$. The groundbreaking work of Voevodsky and Ayoub (\cite{Ayoub_6FF_vol1},\cite{Ayoub_6FF_vol2}) shows that the assignment $ S \mapsto \op{SH}(S)$ admits a six-functor formalism. Robalo, in \cite{Robalothesis} formulated the above results in the language of $(\infty,1)$-categories which gives us a abstract 6-functor formalism :
\begin{equation}
    \SH (-) : \op{Corr}(\Schf)^{\otimes}_{\op{sft},\op{all}} \to \op{Cat}^{\otimes}_{\infty}.
\end{equation}

In recent years, several extensions of stable motivic homotopy theory have been to algebraic stacks. Two kinds of $\SH$ have been studied in the context of algebraic stacks, namely the \textit{genuine} $\SH$ (\cite{Hoyois_Equiv_Six_Op}) and the \textit{Borel} $\SH$. In this article, we only consider the Borel $\SH$ constructed by Chowdhury(\cite{Chowdhury}) and Khan-Ravi(\cite{Khan-Ravi_Generalised_Coh_Stacks}). 

Unlike $ \D_{et}$, $ \SH$ satisfies Nisnevich descent, not étale descent. We define a category of algebraic stacks that encodes the Nisnevich condition in the atlas.

	\begin{definition}\cite[Definition 2.1.4]{Chowdhury}
		\begin{enumerate}
			\item [$ (i) $]	Let $ \op{AlgSt}^{NL} $ be the $\infty$-category of algebraic (1)-stacks (also known as Artin stacks) for which there exists an atlas that admits Nisnevich-local sections. In other words, the category consists of algebraic stacks $\X$ for which there exists an atlas $ x: X \to \X$ such that for any test scheme $T$ with a morphism $ t: T \to \X$, the base change morphism $x': X \times_{\X} T \to T$ admits sections Nisnevich-locally.     
			\item [$ (ii) $]			 A morphism of algebraic stacks $f : \X \to \Y$ in $\op{ASt}^{ NL}$ is said to admit \textit{Nisnevich-local sections} if there exists an atlas $ y : Y \to \Y$ admitting Nisnevich-local sections and a morphism $s : Y \to \X$ such $ f\circ s = y$. We will denote the set of these maps as $ \op{NL} $.
			\item [$(iii)$] The topology generated by NL-maps will be denoted as $\tau_{NL}$ (or simply NL if it is clear from the context that we are talking about the topology)and it will be referred to as the \textit{Nisnevich Local} topology. Covers for $\tau_{NL}$ will be called NL-covers.
		\end{enumerate}
        \end{definition}
 This category includes interesting examples of algebraic stacks, such as quotient stacks.  However, it turns out that we have the following general statement:
 \begin{theorem}\cite[Theorem A.1]{deshmukh2025motivichomotopytypealgebraic},\cite[Proposition 2.4] {chowdhury2025nonrepresentablesixfunctorformalisms}

 Let $\X$ be an algebraic stack. Then $\X  \in \op{AlgSt}^{NL}$. Hence $\op{AlgSt}= \op{AlgSt}^{NL}$.
     
 \end{theorem}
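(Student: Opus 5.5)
The plan is to reduce to the scheme case through smooth atlases and the Nisnevich-local section property of their base changes. Let $\X$ be an algebraic stack and choose a smooth surjective atlas $x : X \to \X$ with $X$ a scheme, which we may take to be a disjoint union of affine schemes. We must show that for every scheme $T$ and every morphism $t : T \to \X$, the smooth surjection $x' : X_T := X \times_{\X} T \to T$ admits sections Nisnevich-locally on $T$. We may not be able to do this for an arbitrary atlas. So first we modify $x$, enlarging the atlas without losing smoothness or surjectivity, so that the base changes gain enough points.

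The key step is to make the atlas hit every residue field. Fix a point $s \in T$ with residue field $\kappa(s)$, and consider the composite $\op{Spec}\kappa(s) \to T \to \X$. Since $X_T \to T$ is smooth and surjective, its fibre over $s$ is a nonempty smooth $\kappa(s)$-scheme. Such a scheme has a $\kappa(s)$-point whenever $\kappa(s)$ is infinite, and more generally after passing to a finite separable extension. A Nisnevich-local section near $s$ exists exactly when there is a rational point in the fibre. Indeed, smoothness makes $x'$ étale-locally a projection $\mathbb{A}^n_T \to T$, so a rational point in the fibre lifts to a section over an étale neighbourhood of $s$ with trivial residue field extension (Hensel / EGA IV 17.16.3). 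This is the standard criterion: a smooth surjection $Y \to T$ is a Nisnevich cover if and only if every fibre has a rational point.

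It therefore suffices to build an atlas whose fibres over all points of all schemes mapping to $\X$ have rational points. For this I would use the construction of Deshmukh et al. Replace $X$ by the disjoint union, over points $\xi : \op{Spec} k \to \X$ with $k$ a field, of smooth neighbourhoods through which $\xi$ factors. More concretely, take $X' = \coprod_n$ of the schemes of the form $\X$-points of $\mathbb{A}^n$-bundles or Grassmannian-type resolutions, each smooth over $\X$. Alternatively, use that every point of a quasi-separated algebraic stack admits a smooth neighbourhood $U \to \X$ such that $\op{Spec}k \to \X$ lifts to $U$ with the same residue field. This is the "residual gerbe has a smooth chart with rational point" lemma, obtained by slicing the smooth atlas by a regular sequence transverse to the fibre (EGA IV 17.16.3 applied to $X \to \X$ base changed to the point). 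Taking the disjoint union of such $U$ over a set of representatives of field-valued points, up to bounding cardinality by a set of fields of bounded size, gives a smooth surjective atlas. Its base change to any $T$ has rational points in every fibre.

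I expect the main obstacle to be this set-theoretic and slicing step. The slicing must be done uniformly, so that the lifted $\kappa(s)$-point exists for every $t$ rather than only for one chosen point. One must also check that the disjoint union remains a scheme locally of finite type in an appropriate sense. The remaining steps are formal. Having such an atlas, $\X \in \op{AlgSt}^{NL}$ by definition, and the inclusion $\op{AlgSt}^{NL} \subset \op{AlgSt}$ is full, giving the equality.
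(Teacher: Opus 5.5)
The paper does not prove this statement; it quotes it from the two cited references. Your argument therefore has to stand on its own, and it has a genuine gap at exactly the step you flagged.

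Your reduction is correct. By EGA IV 17.16.3, a smooth atlas $x\colon X\to\X$ has Nisnevich-local sections iff, for every field-valued point $\xi\colon\op{Spec}k\to\X$, the fibre $X_\xi$ has a $k$-rational point. Equivalently, every field-valued point of $\X$ lifts to $X$. But producing such an atlas is the entire content of the theorem, and neither of your two mechanisms achieves it.

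\textbf{Rational points.} A nonempty smooth $k$-scheme need not have a $k$-point when $k$ is infinite. Take $\X=B(\mathbb{Z}/2)$ over $\mathbb{Q}$ with atlas $\op{Spec}\mathbb{Q}\to\X$. The fibre over the point given by the torsor $\op{Spec}\mathbb{Q}(i)$ is $\op{Spec}\mathbb{Q}(i)$ itself, which has no $\mathbb{Q}$-point.

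\textbf{Slicing.} Slicing via EGA IV 17.16.3 produces a locally closed $Z\subset X$ through a point $z$ of $X_\xi$ with $Z\to\X$ étale at $z$. It never changes $\kappa(z)$, which is in general a nontrivial finite separable extension of $k$. So slicing only recovers étale-local sections, which you already had. In the example above the atlas is already étale, so slicing does nothing. A chart that does work there is $GL_n/(\mathbb{Z}/2)\to B(\mathbb{Z}/2)$ for any embedding $\mathbb{Z}/2\subset GL_n$, by Hilbert 90. This is not a slice or neighbourhood of the original atlas.

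\textbf{Frame bundles.} Your other suggestion is precisely this Hilbert 90 trick. It only applies to stacks with enough vector bundles, such as $[Y/GL_n]$. It gives nothing for, e.g., $BA$ with $A$ an abelian variety: $BA$ has only trivial vector bundles, while $H^1(L,A)$ can be nonzero.

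What is missing is a construction that, uniformly in $\xi$, turns a point of $X_\xi$ over a degree-$d$ separable extension $k'/k$ into a $k$-point of a genuinely new smooth chart that is still an algebraic space. One way to do this:

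\begin{itemize}
\item For an $\infty$-free test scheme $T$, let $V_d(T)$ parametrize triples $(\xi,\mathcal{A},\ell)$. Here $\xi\in\X(T)$, $\mathcal{A}$ is a finite étale $\mathcal{O}_T$-algebra of rank $d$ equipped with a basis, and $\ell$ is a lift of $\xi|_{\op{Spec}\mathcal{A}}$ to $X$. This is a Weil restriction of $X$ along the universal rank-$d$ étale algebra over $\X\times\op{Et}_d$, where $\op{Et}_d\cong GL_d/S_d$.
\item $V_d\to\X$ is smooth.
\item The basis kills all automorphisms: an automorphism of $\xi$ that is trivial on a finite étale surjective cover is trivial. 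Hence $V_d$ is an algebraic space.
\item A $k'$-point of $X_\xi$ together with a $k$-basis of $k'$ is a $k$-point of $V_d$ over $\xi$.
\item For $\X$ quasi-separated, finish with the fact that quasi-separated algebraic spaces admit Nisnevich covers by schemes.
\end{itemize}

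This also removes your set-theoretic problem, since the new atlas is indexed by $d\in\mathbf{N}$ rather than by the proper class of field-valued points. With your pointwise approach you would still need to show that every field-valued point extends one drawn from a set.
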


The above theorem shows that every algebraic stack admits an atlas that admits sections Nisnevich locally. The main result of the extension of the six-functor formalism in the motivic context can now be stated as follows:
\begin{theorem}[\cite{Chowdhury,Khan-Ravi_Generalised_Coh_Stacks}]\label{SHonalgstacks}
$\SH_{(\Schf,\op{sft})}(-)$ extends to an abstract $6$-functor formalism 
\begin{equation}
    \SH(-) : \op{Corr}(\op{AlgSt})^{\otimes}_{\op{lft},\op{All}} \to \op{Cat}^{\otimes}_{\infty}
\end{equation}
Where $\op{lft}$ is the set of maps of algebraic stacks which are not necessarily representable. In particular, for any algebraic stack admitting a Nis-loc atlas, $\SH(\X)$ can be described as follows:
\begin{equation}
    \SH(\X) \cong \op{lim}_{\bb \in \Delta} \SH(X^{\bb}_{\X}).
    \end{equation}

\end{theorem}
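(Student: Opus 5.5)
The statement is Theorem~\ref{SHonalgstacks}: $\SH$ on $\Schf$ extends to algebraic stacks with lft (not necessarily representable) morphisms, and is computed by descent along a Nisnevich-local atlas. The plan is to obtain it by two successive applications of \cref{Theorem:extensionalonggeometricandexceptionalpairs}, starting from Robalo's formalism $\SH:\op{Corr}(\Schf)^{\otimes}_{\op{sft},\op{all}}\to\op{Cat}^{\otimes}_\infty$. That formalism arises from the Nagata setup (schemes, sft, open immersions, proper maps) via \cref{theorem: mainconstructiontheorem}.

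\textbf{Step 1: exceptional pair on schemes.} First pass from separated finite type morphisms of schemes to locally finite type morphisms. Take $\S$ to be Nisnevich covers (in particular Zariski covers). Then $(\Schf,\S,\op{sft})\subset(\Schf,\S,\op{lft})$ is an exceptional pair, exactly as in the second example after \cref{Definition:Nice geometric and exceptional pairs.}. Codescent of $\SH_!$ along Nisnevich hypercovers holds because:
\begin{itemize}
\item $\SH^*$ satisfies Nisnevich hyperdescent (Morel--Voevodsky, Ayoub);
\item for étale maps one has $f_!=f_\#$ by purity, and $f_\#\dashv f^*$;
\item hence passing to left adjoints converts descent for $(-)^*$ into codescent for $(-)_!$.
\end{itemize}
Part (2) of \cref{Theorem:extensionalonggeometricandexceptionalpairs} then gives a formalism on $\op{Corr}(\Schf)_{\op{lft},\op{all}}$.

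\textbf{Step 2: nice geometric pair to stacks.} Next take $\Ca=\Schf$ and $\Ca'=\op{AlgSt}$. Let $\S$ (resp.\ $\S'$) be the smooth morphisms admitting Nisnevich-local sections, and let $\E$ (resp.\ $\E'$) be lft morphisms (representable on the scheme side). We verify the conditions of \cref{Definition:Nice geometric and exceptional pairs.}:
\begin{itemize}
\item Condition (c), existence of atlases, is precisely the cited theorem $\op{AlgSt}=\op{AlgSt}^{NL}$ \cite[Theorem A.1]{deshmukh2025motivichomotopytypealgebraic}.
\item Condition (d) holds because base change of an lft map along a scheme atlas is an lft map of algebraic spaces.
\end{itemize}
Algebraic spaces must be inserted as an intermediate layer by an identical step (spaces have Nisnevich-local-section atlases by schemes).

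The required hypothesis is $\S$-Čech descent for $\SH^*$. This is the key input: a smooth map with Nisnevich-local sections is, after refinement, a Nisnevich cover composed with a smooth map having a section, and $\SH^*$ satisfies descent for both. Descent for Nisnevich covers is classical. Descent for smooth maps with a section follows from the split-augmented Čech nerve being a colimit diagram. Part (1) then yields the extension to $\op{Corr}(\op{AlgSt})_{\op{lft},\op{all}}$.

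The formula $\SH(\X)\simeq\lim_{\Delta}\SH(X^\bullet_{\X})$ is by construction: the geometric-pair extension defines $\D^*$ on $\Ca'$ as the right Kan extension, which descent identifies with the Čech limit along the atlas. The symmetric monoidal enhancement is carried along by running the same arguments on $(\Ca^{\op{op}})^{\amalg,\op{op}}$.

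\textbf{Main obstacle.} I expect the descent verification in Step 2 to be the main obstacle, specifically for \emph{non-representable} lft maps. The atlas of the fiber product is then only an algebraic space or stack, so the extension must be iterated: schemes $\to$ algebraic spaces $\to$ $1$-stacks, and possibly further for higher stacks. At each stage we must recheck conditions (c) and (d) for the nice geometric pair. To handle this, I would first try to prove Nisnevich-local-section descent uniformly at the scheme level, and then argue that each iteration preserves it.
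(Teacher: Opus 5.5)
The paper does not prove this statement; it imports it from \cite{Chowdhury,Khan-Ravi_Generalised_Coh_Stacks}, with the non-representable case as in \cite{chowdhury2025nonrepresentablesixfunctorformalisms}. Your Step 1 is fine, and so is your $*$-descent argument for smooth maps with Nisnevich-local sections. Step 2, however, has a genuine gap: a nice geometric pair can only produce \emph{representable} exceptional morphisms.

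Condition (d) in the definition of a nice geometric pair says the following: for $f\colon X'\to Y'$ in $\E'$ and an atlas $Y\to Y'$, the base change $X'\times_{Y'}Y\to Y$ must lie in $\E$. In particular it forces $X'\times_{Y'}Y\in\Ca$. For a non-representable lft morphism of algebraic stacks, this fibre product is a genuine algebraic stack. So your justification of (d) (``base change of an lft map along a scheme atlas is an lft map of algebraic spaces'') fails exactly in the case the statement singles out. Iterating schemes $\to$ algebraic spaces $\to$ stacks, as you propose under ``Main obstacle'', does not help. At every stage, (d) forces the base change to lie in the smaller category, so non-representable maps never enter. Proving Nisnevich-local-section $*$-descent more uniformly therefore addresses the wrong hypothesis.

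The missing step is a local-on-the-source extension, which has three parts.
\begin{enumerate}
\item Once you have the formalism on $\op{AlgSt}$ for representable lft maps, apply part (2) of \cref{Theorem:extensionalonggeometricandexceptionalpairs} to $(\op{AlgSt},\S,\op{lft}^{\op{rep}})\subset(\op{AlgSt},\S,\op{lft})$, with $\S$ the smooth surjections admitting Nisnevich-local sections.
\item For $f\colon X\to Y$ lft, take the \v{C}ech nerve $Y_\bullet$ of a smooth atlas of $Y$. Then take a smooth hypercover $X_\bullet\to X$ that maps to $X\times_Y Y_\bullet$ via atlases. Every $f_n$ is then a map of algebraic spaces, hence representable.
\item The hypothesis you must then check is \emph{codescent of $\SH_!$} (equivalently, descent for $\SH^!$) along such smooth hypercovers. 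This is not a formal consequence of $*$-descent.
\end{enumerate}
In Step 1 you used $f_!=f_\#$, which only holds for \'etale $f$. For smooth $f$ you need purity, $f^!\simeq\Sigma^{T_f}f^*$ (equivalently $f_!\simeq f_\#\Sigma^{-T_f}$), with the Thom twists invertible and compatible with base change. That is what lets you transfer the $*$-descent you proved into $!$-descent. This input is what actually yields exceptional functoriality for non-representable morphisms, and your plan does not contain it.
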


\subsection{$\SH(-)$ for Ind-Pro-Algebraic Stacks.}

We apply a combination of the extension theorems from our last section. Before we proceed to the extension theorems, let us recall the key examples relevant to this setup. We recall the objects as introduced in \cite[Section 4]{richarzscholbachintersectionmotive}.
Let $G$ be an affine $\mathbf{Z}$-group scheme whose geometric fibers are connected reductive groups and which admits a maximal torus $T$ defined over $\mathbf{Z}$.
\begin{enumerate}
    \item The loop group $LG$ is a group functor on the category of rings defined as :
    \begin{equation}
        LG : R \mapsto G(R((\varpi)))
    \end{equation}
    where $R((\varpi))$ is the ring of Laurent series in variable $\varpi$.
    As $G$ is an affine group scheme and of finite type, $LG$ is represented by an ind-affine ind-scheme.
    \item The positive loop group is a group functor on the category of rings defined as :
    \begin{equation}
        L^+G : R \mapsto G(R[|\varpi|])
    \end{equation}
    where $R[|\varpi|] \subset R((\varpi))$ is the subring of formal power series. As presheaves, we have 
    \begin{equation}
    L^+G = \varinjlim_i G_i
    \end{equation}
    where $G_i(R) = G(R[|\varpi|])/(\varpi^{i+1})$. This $L^+G$ is represented by a pro-algebraic group $\mathbf{Z}$-group scheme. 
    \item The inclusion $L^+G \subset LG$ is represented by a closed immersion. 
    \item The \textit{affine Grassmanian} $\op{Gr}_G$ is defined as 
    \begin{equation}
        \op{Gr}_G := \frac{LG}{L^+G}.
    \end{equation} 
    The prestack $\op{Gr}_G$ is an ind-projective $\mathbf{Z}$-scheme(\cite[Section 4.3]{richarzscholbachintersectionmotive}). There is an $L^+G$-action on $\op{Gr}_G$ given by 
    \begin{equation}
        L^+G \times \op{Gr}_G \to \op{Gr}_G \quad (g,x) \mapsto g.x
    \end{equation}
    By \cite[Lemma 4.3.3]{richarzscholbachintersectionmotive}, it follows that the action of $L^+G$ on $\op{Gr}_G$ gives a $L^+G$-stable presentation of 
    \begin{equation}
        \op{Gr}_G \cong \op{colim}_i \op{Gr}_{G,i}
    \end{equation}
    such that the $L^+G$ action on $\op{Gr}_{G,j}$ factors via $G_i$ for some $i$. Morever each $\op{Gr}_{G,j}$ is a projective $\mathbf{Z}$-scheme.
    \item The Hecke stack $\op{Hk}_G:=\op{Gr}_G/L^+G = L^+G \backslash LG /L^+G$ is a ind-pro algebraic stack :
    \begin{equation}\label{Heckestackindprodescription}
        \frac{\op{Gr}_G}{L^+G} \cong "\varprojlim_i" \op{Gr}_{G,i}/L^+G \cong "\varprojlim_i" "\varinjlim_j" \Big[\frac{\op{Gr}_{G,i}}{G_j}\Big]
    \end{equation}

\item  The Hecke stack $\op{Hk}_G$ also can be interpreted as fiber product $BL^+G \times^{BLG} BL^+G$.
\end{enumerate}
Let $K$ be a filtered $\infty$-category with an initial object. Let $(\Ca,\E,\I,\P)= (\op{Sch},\op{sft},\op{open},\op{prop})$ and  $\Ca' = \op{AlgSt}$. Then, using  $\S (\S')$ to be (representable) smooth morphisms $\E$ to be (rep) for locally finite type maps of schemes (algebraic stacks) in the way as done \cite{chowdhury2025nonrepresentablesixfunctorformalisms}, along with \cref{theorem:proextensiongeoandeceptionalpairs}, we get
\begin{notation}
  
 Let    $\op{Pro}^K_{\S}(\op{AlgSt})$ be the category of $K$-pro-algebraic stacks with smooth surjections as transition maps.
    \end{notation}
We can then extend \cref{SHonalgstacks} via \cref{theorem:proextensiongeoandeceptionalpairs} to the following proposition:
\begin{proposition}
    $\SH(-)$ extends to a presentable six-functor formalism
    \begin{equation}
        \SH(-): \op{Corr}(\op{Pro}^K_{\S}(\op{Algst}))_{\op{Adj}^!_!(\E),\op{All}} \to \op{Pr}^L_{\op{cl}}
    \end{equation}
    In particular for any pro-algebraic stack $\X:=\{\X_k\}_{k \in K}$, we have
    \begin{equation}
        \SH(\X) \cong \op{colim}_{\op{Pr}^L_{\op{cl}}}^*\SH(\X_i) .
    \end{equation}
\end{proposition}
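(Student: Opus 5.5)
The plan is to obtain the statement by applying \cref{theorem:proextensiongeoandeceptionalpairs}(1) to the six-functor formalism of \cref{SHonalgstacks}. This requires checking three things. First, that $\SH(-)$ on $\Schf$ arises from the Nagata setup $(\op{Sch},\op{sft},\op{open},\op{prop})$ and satisfies the hypotheses of \cref{theorem:IndProextensions}(1). Second, that the inclusion of schemes into algebraic stacks, with smooth morphisms and locally finite type morphisms, is a nice geometric pair in the sense of \cref{Definition:Nice geometric and exceptional pairs.}. Third, that $\SH$ satisfies the continuity property.

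For the Nagata input, I take $\S$ to be the smooth morphisms. These lie in $\op{HL}$ by smooth base change and the smooth projection formula. Cohomological purity is the classical Ayoub--Voevodsky purity: for smooth separated $f$, $\Sigma_f$ is the Thom twist of the tangent bundle, which is invertible, and $f_{\#}\simeq f_!\Sigma_f$. The ambidexterity condition $\op{Ex}_{\#!}$ being an equivalence for squares mixing $\S$ and $\E$ follows by comparing the two exchange maps through purity and proper/smooth base change. This puts us in the setting of \cref{Theorem:PurityFunctorialFullForm} and \cref{uppershriekupperstarsmoothequivalence}, so $\op{Adj}^!_!(\E)$ may be replaced by $\op{Adj}^*_!(\E)$ as in the proof of \cref{theorem:IndProextensions}.

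For the nice geometric pair, every algebraic stack admits a smooth atlas, and by the theorem recalled above this atlas can be taken with Nisnevich-local sections. Base changes of an atlas along maps from schemes are smooth. Since $\SH^*$ satisfies Nisnevich descent, and hence descent along NL-\v{C}ech covers, \cref{Theorem:extensionalonggeometricandexceptionalpairs} applies, following \cite{chowdhury2025nonrepresentablesixfunctorformalisms} for non-representable $\op{lft}$ maps.

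Continuity of $\SH$ along limits of smooth affine transition maps is standard for qcqs schemes, via compact generation and the continuity of $\SH$. For stacks, it follows by smooth descent, since the limit over $\Delta$ commutes with the filtered colimit along pullbacks: the pullbacks preserve compact generators and the relevant diagrams are finite-level computations. Having verified these inputs, \cref{theorem:proextensiongeoandeceptionalpairs} yields the functor on $\op{Corr}(\op{Pro}^K_{\S}(\op{AlgSt}))_{\op{Adj}^!_!(\E),\op{All}}$. The formula $\SH(\X)\simeq \op{colim}^*\SH(\X_k)$ is then read off from the remark following \cref{theorem:IndProextensions}, since by construction $\D_{\op{pro}}$ takes the colimit in $\op{Pr}^L_{\op{cl}}$ along pullbacks.

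The main obstacle is the stacky step. I expect the hardest part to be checking that the pro-level pair $\op{Pro}^K_{\S}(\op{Sch})\subset\op{Pro}^K_{\S'}(\op{AlgSt})$ is again a nice geometric pair. Concretely, this means producing levelwise-compatible atlases for a $K$-system of stacks with smooth surjective transitions. I would first try to use that $K$ has an initial object $0$: choose an atlas $U\to\X_0$ and pull it back along the structure maps $\X_k\to\X_0$. The pullbacks $U\times_{\X_0}\X_k$ are then again algebraic stacks, not yet schemes. The fix would be to iterate, choosing atlases of these pullbacks compatibly, by exploiting that the transition maps are smooth.
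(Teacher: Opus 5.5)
Your proposal follows essentially the paper's route: the paper deduces this proposition in one step from \cref{theorem:proextensiongeoandeceptionalpairs}(1), applied to $(\op{Sch},\op{sft},\op{open},\op{prop})\subset\op{AlgSt}$ with (representable) smooth maps as $\S'$ and (representable) lft maps as $\E'$ following Chowdhury--D'Angelo, and it reads off $\SH(\X)\simeq\op{colim}^*\SH(\X_k)$ from the construction of $\D_{\op{pro}}$. The compatible pro-level atlases you flag as the main obstacle belong to the proof of that theorem (the paper dispatches them with ``one checks''), so citing it suffices here; you should also drop the appeal to \cref{uppershriekupperstarsmoothequivalence}, because its hypothesis that $\Sigma_f$ be trivial fails for $\SH$, where $\Sigma_f$ is a Thom twist.
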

We want to extend the formalism to the setting of Ind-Pro algebraic stacks. Let $K$ be a filtered $\infty$-category. Set $\tilde{\P} = \op{Ind}(\op{Adj}^!_!(\op{Pro}(\op{cl})))$ where $\op{cl} \subset \P$ the category of closed immersions. 

\begin{notation}
    Set 
    \begin{enumerate}
        \item   $\op{IndPro}^K_{\op{cl},\op{sm}}(\op{AlgSt}):= \op{Ind}^K_{\op{cl}}(\op{Pro}^K_{\S}(\op{AlgSt}))$
        \item  $\op{IndPro}^!_!(\E):= \op{Adj}^!_!(\E))$ and
        \item $\op{IndPro}^*_!(\op{All}) := \op{Adj}^*_!(\op{All})$.
    \end{enumerate}
   
\end{notation}

Via \cref{Theorem:ExtendingIndGeneralSetup}, we get
\begin{theorem}\label{theorem:sixfuncindproSH}
    $\SH(-)$ extends to a presentable six-functor formalism
    \begin{equation}
        \SH(-) : \op{Corr}(\op{IndPro}^K_{\op{cl},\op{sm}}(\op{AlgSt}))_{\op{IndPro}^!_!(\E),\op{IndPro}^*_!(\op{All})} \to \op{Pr}^L_{\op{cl}}
    \end{equation}
    In particular for an Ind-Pro-Algebraic stack $\X:=\{X_{ij}\}_{i \in K,j\in K}$ we have

\begin{equation}
    \SH(\X) \cong \op{colim}_{!,i \in K}\op{colim}^*_{j \in K}\SH(\X_{ij}) \in \op{Pr}^L_{\op{cl}}
\end{equation}
\end{theorem}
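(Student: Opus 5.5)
The statement is \cref{theorem:sixfuncindproSH}. I would prove it in two stages, first building the pro-level formalism and then applying the ind-extension to it.

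\textbf{Stage 1 (the pro level).} Start from the formalism on $\op{Pro}^K_{\S}(\op{AlgSt})$ given by the preceding proposition. This was obtained from \cref{SHonalgstacks} via \cref{theorem:proextensiongeoandeceptionalpairs} and \cref{theorem:IndProextensions}(1), and it satisfies $\SH(\X)\cong \op{colim}^*_{j}\SH(\X_j)$. Cohomological purity for smooth maps is available here by \cref{uppershriekupperstarsmoothequivalence}, which gives $\op{colim}^*\simeq\op{colim}^!$. I then take this presentable six-functor formalism on the geometric setup $(\op{Pro}^K_{\S}(\op{AlgSt}),\op{Adj}^!_!(\E))$ as the new input. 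One point needs care: \cref{Theorem:ExtendingIndGeneralSetup} does not require a Nagata setup. This matters, because the pro-category is not known to carry one.

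\textbf{Stage 2 (the ind level).} Apply \cref{Theorem:ExtendingIndGeneralSetup} with $\Ca=\op{Pro}^K_{\S}(\op{AlgSt})$ and $\tilde{\P}=\op{Ind}(\op{Adj}^!_!(\op{Pro}(\op{cl})))$, the levelwise closed immersions that are $!$-adjointable. The only hypothesis to check is $f_!\cong f_*$ for $f\in\tilde{\P}$.

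\textbf{Verifying $f_!\cong f_*$.} This is the main obstacle. For a levelwise closed immersion $f=\{f_j\}$ of pro-stacks, $f_!$ is the map induced on $\op{colim}^*$ by the $f_{j!}=f_{j*}$. This is legitimate because closed immersions are proper, hence cohomologically proper by the remark following the definition of $\op{HL}$. The point is to show that the induced map on colimits is right adjoint to $f^*$, i.e. that the levelwise adjunctions $f_j^*\dashv f_{j*}$ assemble into an adjunction on the colimits. To check compatibility with the transition maps, I would use proper base change: the smooth pullback squares between $f_j$ and $f_{j'}$ are cartesian, since closed immersions of pro-systems pulled back along smooth transitions remain cartesian. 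This gives the Beck--Chevalley condition $p^*f_{j'*}\simeq f_{j*}p'^*$. Once that holds, the standard statement on colimits in $\op{Pr}^L$ of adjointable diagrams applies (using that $K$ is filtered, so that $\op{colim}^*\simeq\lim_*$ by passing to right adjoints), and the levelwise $f_{j*}$ then assemble to a right adjoint of $f^*$. Hence $f_!\cong f_*$. The $\op{Adj}^!_!$ condition built into $\tilde{\P}$ is what ensures that these levelwise identifications $f_{j!}=f_{j*}$ are compatible in the $!$-direction as well.

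\textbf{Conclusion and the formula.} With the hypothesis verified, \cref{Theorem:ExtendingIndGeneralSetup} yields $\SH(-)$ on $\op{Corr}(\op{IndPro}^K_{\op{cl},\op{sm}}(\op{AlgSt}))_{\op{IndPro}^!_!(\E),\op{IndPro}^*_!(\op{All})}$, valued in $\op{Pr}^L_{\op{cl}}$. For the explicit formula, an ind-object $\X=\{\X_i\}$ is sent to $\op{colim}_{!,i}\SH(\X_i)=\op{lim}^!_i\SH(\X_i)$; this is the remark after \cref{theorem:IndProextensions}. Substituting $\SH(\X_i)\cong\op{colim}^*_j\SH(\X_{ij})$ from Stage 1 gives the stated double colimit.
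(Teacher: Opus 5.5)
Your skeleton matches the paper's. You take the pro-level formalism on $\op{Pro}^K_{\S}(\op{AlgSt})$ from the preceding proposition, apply \cref{Theorem:ExtendingIndGeneralSetup} with $\tilde{\P}=\op{Ind}(\op{Adj}^!_!(\op{Pro}(\op{cl})))$, and read off the formula from the remark after \cref{theorem:IndProextensions}. The paper invokes \cref{Theorem:ExtendingIndGeneralSetup} without checking $f_!\cong f_*$ on $\tilde{\P}$, so your verification is the only added content. Its key step fails as argued.

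For an edge $k\to k'$ of $K$, write $p\colon X_{k'}\to X_k$ and $q\colon Y_{k'}\to Y_k$. The naturality square formed by $p,q,f_k,f_{k'}$ is only commutative. A morphism in $\op{Pro}^K_{\S}$ is just a natural transformation, and being a levelwise closed immersion does not make these squares cartesian. Concretely, take $K=\mathbf{N}$, $\Y=\{\mathbb{A}^k\}$ with the projections, $\X$ constant at $\op{Spec}\mathbf{Z}$, and $f_k$ the zero sections. Then $X_k\times_{Y_k}Y_{k+1}\cong\mathbb{A}^1\neq X_{k+1}$. Moreover $q^*f_{k*}\mathbf{1}$ is supported on the line $q^{-1}(0)$, while $f_{k+1*}p^*\mathbf{1}$ is supported at the origin, so the Beck--Chevalley map $q^*f_{k*}\to f_{k+1*}p^*$ is not an equivalence. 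Hence ``levelwise closed immersion plus proper base change'' cannot be the argument. As written, it would equally ``prove'' Beck--Chevalley for this morphism, which is false, and for which the pro-level $f_!$ is not even defined.

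The missing idea is that the Beck--Chevalley equivalences $q^*f_{k*}\simeq f_{k'*}p^*$ are exactly what membership in the $\op{Adj}$ class provides. In the proof of \cref{theorem:IndProextensions}(1), the paper uses purity to trade $\op{Adj}^!_!(\E)$ for $\op{Adj}^*_!(\E)$, i.e.\ for the commutation $q^*f_{k!}\simeq f_{k'!}p^*$. This condition is what makes the pro-level $f_!$ (the map on $\op{colim}^*$ induced by the $f_{k!}$) exist at all. Since $f_{k!}=f_{k*}$ for closed immersions, it is precisely the right adjointability of the $f^*$-squares. With that input, your argument goes through: levelwise adjunctions with right-adjointable naturality squares assemble into an adjunction on colimits in $\op{Pr}^L$. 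So $\op{Adj}^!_!$ is not an extra ``$!$-direction compatibility'' layered on top of your argument; it is the Beck--Chevalley input itself.

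Two smaller points:
\begin{enumerate}
\item Filteredness of $K$ plays no role in $\op{colim}_{\op{Pr}^L}\simeq\lim_{\op{Pr}^R}$, which holds for every small diagram.
\item \cref{uppershriekupperstarsmoothequivalence} assumes $\Sigma_f$ is trivial for $f\in\S$. This fails for $\SH$, where $\Sigma_f$ is a Thom twist, so you should not cite it here, though nothing in your argument depends on it.
\end{enumerate}
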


\begin{definition}
The \textit{Spectral Satake Category} is defined as $\SH(\op{Hk}_G)$. In particular, using the Ind-pro-description of the Hecke stack (\cref{Heckestackindprodescription}), we have

\begin{equation}
    \SH(\op{Hk}_G) \cong \op{colim}_{!,i} \op{colim}^*_{j} \SH([\frac{Gr_{G,i}}{G_j}])
\end{equation}

\end{definition}

\begin{remark}
    \begin{enumerate}
    \item As we have taken filtered colimits, $\SH(\X)$ is also stable. 
    \item As $\op{SH}(-)$ satisfies continuity, one sees that if an algebraic stack $\X$ admits a projective system, then taking the colimit of the computation agrees with $\SH(\X)$. 
        \item One can do the same construction for $\op{DM}(-)$ also in the similar fashion. This can be done as $\op{DM}= \op{Mod}_{H\mathbf{Z}}(\SH)$. This generalizes the work of Yaylali (\cite{yaylali2025rationalmotivesproalgebraicstacks}) to integral coefficients. 
        \item  As $\SH(-)$ can be defined for derived stacks (\cite{chowdhury2025nonrepresentablesixfunctorformalisms}[Section 5.1], one can do the same construction for inductive systems and projective systems of derived algebraic stacks.
        \item As the pullback map for smooth surjections and the pushforward map for closed immersions is fully faithful, by \cref{example:proandindadjointablemorphisms} it follows that we have exceptional pushforward for the non-representable morphism $\op{BL^+G} \to \op{pt}$. 
    \end{enumerate}
\end{remark}

\bibliographystyle{abbrv}

\end{document}